\def\Box{\vcenter{\vbox{\hrule\hbox{\vrule
     \vbox to 8.8pt{\hbox to 10pt{}\vfill}\vrule}\hrule}}}
\def\qed{{\hfill$\square$}}
\def\proof{{\vspace{-0.0cm}\bf Proof: \,}}
\def\Z{{\mathbb Z}}
\def\T{{\mathrm{Tr}}}
\def\F{{\mathbb F}}
\def\mod{{\mathrm{mod\,\,}}}
\def\Tr{{\mathrm{Tr}}}
\def\Norm{{\mathrm{Norm}}}
\def\tra{{\mathrm{T}}}
\newtheorem{theorem}{Theorem}[section]
\newtheorem{problem}[theorem]{Problem}
\newtheorem{lemma}[theorem]{Lemma}
\newtheorem{remark}[theorem]{Remark}
\newtheorem{proposition}[theorem]{Proposition}
\newtheorem{example}[theorem]{Example}
\numberwithin{equation}{section}
\begin{document}
\title[New families of  Hadamard matrices with maximum excess]
{New families of Hadamard matrices with maximum excess}

\author[Mitsugu Hirasaka, Koji Momihara and Sho Suda]{Mitsugu Hirasaka$^\ast$, Koji Momihara$^{\dagger}$ and  Sho Suda$^{\ddagger}$}

\thanks{MSC2010:  05B20; 05B05; 05E30; 11T22; 11T24}
\thanks{$^\ast$ Mitsugu Hirasaka is supported by Basic Science Research 
Program through the National Research Foundation of Korea (NRF) funded 
by Ministry of Science, ICT \& Future Planning (NRF-2016R1A2B4013474).}
\thanks{$^{\dagger}$ Koji Momihara is supported by JSPS KAKENHI Grant Number 17K14236 and 15H03636.}
\thanks{$^{\ddagger}$
Sho Suda is supported by  JSPS KAKENHI Grant Number 15K21075.}

\address{$^\ast$ Department of Mathematics, Pusan National University, Busan 609-735, Republic of Korea} \email{hirasaka@pusan.ac.kr}

\address{$^\dagger$ Faculty of Education, Kumamoto University, 2-40-1 Kurokami, Kumamoto 860-8555, Japan} \email{momihara@educ.kumamoto-u.ac.jp}

\address{$^{\ddagger}$ Department of Mathematics Education, Aichi University of 
Education, 1 Hirosawa, Igaya-cho, Kariya, Aichi 448-8542, Japan} \email{suda@auecc.aichi-edu.ac.jp}

\keywords{Hadamard matrix; Regular Hadamard matrix; Biregular Hadamard matrix; Excess; Association scheme; $t$-intersection set; Block design}

\begin{abstract}
In this paper, we find regular or biregular Hadamard matrices with maximum excess by negating some rows and columns of known 
Hadamard matrices obtained from quadratic residues of finite fields. 
In particular, we show that if either $4m^2+4m+3$ or  $2m^2+2m+1$ is a prime power, then there exists a biregular Hadamard 
matrix of order $n=4(m^2+m+1)$ with maximum excess. Furthermore, we  give a sufficient condition for Hadamard matrices obtained from quadratic residues being transformed to be regular in terms of four-class
translation association schemes on finite fields. 
\end{abstract}

\maketitle

\section{Introduction}\label{sec:int}
An  {\it Hadamard matrix of order $n$} is an $n\times n$ $(-1,1)$-matrix $H$  satisfying $HH^\tra=H^\tra H=nI_n$, where $I_n$ is the $n\times n$ unit matrix. 
It is well known that $n=1,2$ or a multiple of $4$. 
Conversely, it is conjectured that an Hadamard matrix of order $n$ exists for every positive integer $n$ divisible by $4$. 

It is clear that if rows and columns of an Hadamard matrix  $H$ are permuted, 
the matrix remains to be an Hadamard matrix. 
Furthermore, the multiplication of any row and column of $H$ by $-1$ also remains to be an Hadamard matrix. Thus, it is always possible to transform $H$ to have the first row and the first column contain  only $+1$ entries. 
Such an Hadamard matrix is said to be {\it normalized}. We say that two Hadamard matrices are {\it equivalent} if one can be obtained from the other 
by  a sequence of row and column permutations and negations. This is same as saying that two Hadamard matrices are equivalent if one can obtained from the other by premultiplication and postmultiplication by signed permutation matrices. 
In this paper, we mainly treat negations of rows and columns of Hadamard matrices. In this case, the corresponding signed permutation matrices  are just $(1,-1)$-diagonal matrices. 
 
Let ${\bf 1}_n$ denote the all-one vector of length $n$. 
An Hadamard matrix $H$ is called {\it regular} if $H{\bf 1}_n=r{\bf 1}_n$ 
for some positive integer $r$. 
Write $H{\bf 1}_n=(h_1,h_2,\ldots,h_n)^\tra$. 
Then, 
\[
nr^2=\sum_{i=1}^nh_i^2=({\bf 1}_n^\tra H^{\tra})(H{\bf 1}_n)
=n{\bf 1}_n^\tra{\bf 1}_n=n^2. 
\] 
Hence, 
$n$ must be 
a square, namely, $n=4m^2$, and $r=2m$. 

We say that an Hadamard matrix $H$ is {\it biregular} if the 
entries of $H{\bf 1}_n$ take exactly two nonnegative integers, namely, $k_1$ and $k_2$. 
Here, $k_i$, $i=1,2$, must be even integers in the same residue class modulo $4$ from the orthogonality of the rows of $H$. Let $m_1$ (resp. $m_2$) be the frequency of $k_1$ (resp. $k_2$) appearing in $H{\bf 1}_n$.  Write 
$H{\bf 1}_n=(h_1,h_2,\ldots,h_n)^\tra$. 
Then, 
\[
m_1+m_2=n
\]
and 
\[
m_1k_1^2+m_2 k_2^2=\sum_{i=1}^nh_i^2=({\bf 1}_n^\tra H^{\tra})(H{\bf 1}_n)
=n{\bf 1}_n^\tra{\bf 1}_n=n^2. 
\] 
Hence, 
\begin{equation}\label{eq:fre}
m_1=\frac{n^2-nk_2^2}{k_1^2-k_2^2}\hspace{0.4cm} \mbox{and }\hspace{0.2cm} m_2=\frac{-n^2+nk_1^2}{k_1^2-k_2^2}. 
\end{equation}
These classes of Hadamard matrices produce block designs with intersecting properties explained as in the next section. In this paper, we are interested in which Hadamard matrices can be transformed to be regular or biregular. 


We now explain one of major motivations for studying regular or biregular 
Hadamard matrices. 
Let $E(H)$ denote the sum of all entries of $H$. We say that $E(H)$ is the {\it excess} of $H$. Several upper bounds on $E(H)$ have been known~\cite{B,HLS,KF}. We will give one of the known upper bounds on $E(H)$ below. 
\begin{proposition} {\em (\cite{HLS})} \label{prop:Hadabound}
Let $H$ be an Hadamard matrix of order $n$ and let $k$ be an even integer such that  $k\le \sqrt{n}<k+2$. Put $t=k$ if $|n-k^2|<|n-(k+2)^2|$ and $t=k-2$ otherwise. 
Then, 
it holds that 
$E(H)\leq n(t+4)-4s$, where $s$ is the integer part of $n((t+4)^2-n)/(8t+16)$, 
with equality holds if and only if $n$ is a square and $H$ is regular or 
$n$ is a nonsquare and 
 the entries of $H{\bf 1}_n$ are in either 
$\{k,k+4\}$ or $\{k-2,k+2\}$ depending on whether  $t=k$  or $t=k-2$, respectively. 
\end{proposition}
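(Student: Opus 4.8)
\medskip
\noindent\textbf{A proof proposal.}

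The plan is to recast the assertion as an extremal problem for the vector $H\mathbf{1}_n$ and then to bound that problem with an elementary inequality. We may assume $n\ge 4$, so $4\mid n$. Write $H\mathbf{1}_n=(h_1,\dots,h_n)^{\tra}$, so that $E(H)=\mathbf{1}_n^{\tra}H\mathbf{1}_n=\sum_i h_i$, and recall from the Introduction that $\sum_i h_i^2=n^2$. The first step is a congruence modulo $4$: two distinct rows of $H$ are orthogonal and hence agree in exactly $n/2$ coordinates, and counting the four possible $(\pm1,\pm1)$-patterns in such a pair shows that if the $i$-th row has $a_i$ entries equal to $+1$ then $a_i+a_j\equiv n/2\equiv 0\pmod 2$ for all $i\ne j$. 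Hence the $a_i$ have a common parity and all $h_i=2a_i-n$ lie in one class $c\in\{0,2\}$ modulo $4$. As replacing each $h_i$ by $|h_i|$ preserves $\sum h_i^2$ and the class and cannot decrease $\sum h_i$, it suffices to maximise $\sum_i g_i$ over nonnegative integers $g_i\equiv c\pmod 4$ with $\sum_i g_i^2=n^2$.

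The engine is the following ``product over two roots'' estimate. Fix an integer $a\ge 0$ with $a\equiv c\pmod 4$; then $g_i-a=4v_i$ with $v_i\in\Z$, so $(g_i-a)(g_i-a-4)=16\,v_i(v_i-1)\ge 0$ for every $i$, and summing gives
\[
0\le\sum_i(g_i-a)(g_i-a-4)=n^2-(2a+4)\sum_i g_i+na(a+4),
\]
whence
\[
\sum_i g_i\le\frac{n^2+na(a+4)}{2a+4}=n(a+4)-\frac{4n\bigl((a+4)^2-n\bigr)}{8a+16}.
\]
Now $k$ and $t$ enter: $k$ is the even integer with $k\le\sqrt n<k+2$, and the rule selecting $t\in\{k,k-2\}$ via the comparison of $|n-k^2|$ with $|n-(k+2)^2|$ is precisely the rule choosing whichever of the two windows $\{k-2,k+2\}$, $\{k,k+4\}$ around $\sqrt n$ makes the right-hand side above larger. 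If $c\equiv t\pmod 4$, apply the estimate with $a=t$: setting $\rho:=n((t+4)^2-n)/(8t+16)$ and $s=\lfloor\rho\rfloor$, we obtain $E(H)\le n(t+4)-4\rho\le n(t+4)-4s$, which is the claimed bound. If instead $c\equiv t+2\pmod 4$, apply the estimate with $a$ the largest integer of class $c$ that is $\le\sqrt n$ (so $a=t-2$ or $a=t+2$) and check that the bound so obtained is $\le n(t+4)-4s$; writing $\phi(x)=(x+2)+(n-4)/(x+2)$ this amounts to $\phi(a)\le\phi(t)$, which after clearing denominators is a linear comparison between $n$ and $k(k+2)$ that follows from the defining inequality for $t$ together with $4\mid n-k^2$. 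The main obstacle, I expect, is exactly this bookkeeping separating the two residue classes (and checking the unfavourable one is, generically, strictly worse); the inequality doing the real work is just convexity in disguise.

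For the equality clause, run the argument backwards. Equality forces each $v_i\in\{0,1\}$, i.e.\ every entry of $H\mathbf{1}_n$ equals $t$ or $t+4$ with exactly $s$ of them equal to $t$; it forces $\rho=s\in\Z$; and, away from the borderline values of $n$ at which $\phi(a)=\phi(t)$, it forces $c\equiv t\pmod 4$. If $s=n$, all entries equal $t$, which occurs exactly when $t^2=n$, i.e.\ $n$ is a square and $H$ is regular. If $0<s<n$, both values genuinely occur, and since $t=k$ or $t=k-2$ the entries of $H\mathbf{1}_n$ lie in $\{k,k+4\}$ or $\{k-2,k+2\}$ accordingly; conversely, any Hadamard matrix with such row sums attains the bound. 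The delicate points here are again the residue-class comparison and the treatment of the borderline orders $n$.
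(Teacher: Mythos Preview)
The paper does not prove Proposition~\ref{prop:Hadabound}; it is quoted from Hammer--Levingston--Seberry \cite{HLS} and stated without argument. There is therefore no proof in the paper to compare your proposal against.

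For what it is worth, your approach is the standard one and is essentially what appears in the original source: reduce to maximising $\sum g_i$ subject to $\sum g_i^2=n^2$ with all $g_i$ nonnegative and in a fixed residue class modulo~$4$, then bound via the elementary inequality $(g_i-a)(g_i-a-4)\ge 0$ for $a$ in that class. The one place your sketch is genuinely thin is the ``wrong residue class'' case $c\equiv t+2\pmod 4$: you assert that $\phi(a)\le\phi(t)$ follows from the defining inequality for $t$ together with $4\mid n-k^2$, but this comparison deserves an explicit line of verification, and the borderline values of $n$ at which $\phi(a)=\phi(t)$ need separate handling for the equality clause. Apart from that bookkeeping, the argument is correct.
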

Note that if an Hadamard matrix $H$ is regular or biregular with maximum excess attaining the bound of 
Proposition~\ref{prop:Hadabound}, then $H^\tra$ is also regular or biregular, respectively. 

The excess of Hadamard matrices have been studied in \cite{B,CK,FK,HKL,HK,K,KK,KS,KF,S,XXS}. 
In particular, many constructions of regular Hadamard matrices have been 
known in relation to {\it Hadamard designs}. In fact, any Hadamard design yields a regular Hadamard matrix. 
On the other hand, as far as the authors know, there is only a few  paper theoretically treating biregular Hadamard matrices with maximum excess~\cite{K,KKS}. In particular, in \cite{KKS}, it was shown that 
there is a biregular Hadamard matrix of order $n=4(m^2+m+1)$ with maximum excess attaining the bound of Proposition~\ref{prop:Hadabound} if $m$ is a prime power and $m^2+m+1$ is a prime.  In this paper, we will also treat 
Hadamard matrices of order $n=4(m^2+m+1)$. 

In this paper, we give three constructions of regular or biregular Hadamard matrices. We briefly explain the constructions. 

Let $\F_q$ be the finite field of order $q$ and $C$ be the set of nonzero squares of $\F_{q}$. 
We start from the following known construction of Hadamard matrices. 
Assume that $q\equiv 3\,(\mod{4})$. Let $M$ be a $q\times q$ $(1,-1)$-matrix whose rows and columns are labeled by the elements of $\F_q$ and entries are defined by 
\[
M_{i,j}=\begin{cases}
1,& \text{ if } j-i\in C \cup \{0\},\\
-1,& \text{ if } j-i\in \F_q\setminus (C \cup \{0\}). 
\end{cases} 
\]
Define 
\begin{equation}\label{eq:confee}
H=\begin{pmatrix} -1  & {\bf 1}_q^\tra \\ {\bf 1}_q & M \end{pmatrix}. 
\end{equation}
Then $H$ forms an Hadamard matrix of order $n=q+1$. We will prove the following theorem in Section~\ref{sec:const1}. 
\begin{theorem}\label{thm:main1}
If $q=4m^2+4m+3$ is a prime power, then there exists a biregular Hadamard 
matrix of order $n=4(m^2+m+1)$ with maximum excess attaining the bound of 
Proposition~\ref{prop:Hadabound}. In particular, the  matrix $H$
defined in \eqref{eq:confee} can be 
transformed to be a biregular Hadamard matrix with maximum excess by negating some rows and columns of $H$. 
\end{theorem}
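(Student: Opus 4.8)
The starting object is the Paley-type Hadamard matrix $H$ of order $n=q+1=4(m^2+m+1)$ with $q=4m^2+4m+3\equiv 3\,(\mathrm{mod}\ 4)$. I want to produce diagonal $(1,-1)$-matrices $D_1,D_2$ so that $D_1 H D_2$ is biregular with the two row-sums $\{k,k+4\}$ prescribed by Proposition~\ref{prop:Hadabound}. First I would compute the target parameters: here $\sqrt n$ lies between consecutive even integers, and a short calculation identifies $k$ and then the frequencies $m_1,m_2$ from \eqref{eq:fre}; these will be expressed in terms of $m$, and the values $k_1=k$, $k_2=k+4$ should come out to something like $k_1=2m$, $k_2=2m+4$ (up to sign), so that biregularity with these specific row-sums is exactly what must be achieved. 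The core of the argument is then to choose which rows and columns of $H$ to negate.

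**The combinatorial heart.** Negating a set $R$ of rows and a set $S$ of columns changes the row sums of $M$ in a controlled way; the new row-sum vector of $D_1HD_2$ can be written, using the quadratic-character description of $M$, in terms of character sums over $\F_q$. The natural choice is to take $R$ and $S$ to be (the index sets corresponding to) a \emph{difference set} or, more precisely, a subset of $\F_q$ whose indicator interacts nicely with the quadratic residues — a $t$-intersection set in the sense advertised in the abstract. Concretely, I would try $S$ (and symmetrically $R$) to be the set of $x\in\F_q$ with $\mathrm{Tr}$ or with some prescribed quadratic behaviour, and then the row sums of the transformed matrix become, row by row, an affine function of $\#\big((x+S)\cap C\big)$ for varying $x$. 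The requirement that these counts take only two values translates into $S\cup\{0\}$ (or $S$ itself) being a set all of whose translates meet $C$ in one of two sizes — this is where the hypothesis that $q$ is a prime power is used, via Gauss sums / cyclotomic numbers of order $2$. I expect the clean choice is $S=\{x:\,x\ \text{and}\ x+c\ \text{both (non)squares}\}$ for a suitable fixed $c$, whose translate-intersection numbers with $C$ are governed by the classical quaternary cyclotomic numbers of $\F_q$, all of which are known explicitly when $q\equiv 3\,(\mathrm{mod}\ 4)$.

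**Verifying the excess is maximal.** Once biregularity with row-sums in $\{k,k+4\}$ (or $\{k-2,k+2\}$) is established, the maximum-excess conclusion is automatic: Proposition~\ref{prop:Hadabound} states that the bound is attained precisely when the entries of $H\mathbf 1_n$ lie in that prescribed pair of values, so it remains only to check that the two achieved row-sums are the ones the proposition singles out (with the correct sign, arranging the larger excess), and that their frequencies are the forced $m_1,m_2$ of \eqref{eq:fre} — which they must be, since any biregular matrix with those two row-sum values has those frequencies. A final bookkeeping point: after negating rows one should double-check $D_1HD_2$ is still Hadamard (it always is) and that the bordering row/column in \eqref{eq:confee} is handled consistently with the rest, possibly by including or excluding index $0$ and the border in $R,S$; this is routine but must be done carefully so the counts line up.

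**Main obstacle.** The hard part will be the exact cyclotomic computation: choosing the right set $S$ (and $R$) and proving that $\#((x+S)\cap C)$ takes exactly two values as $x$ ranges over $\F_q$, with the right two values and the right frequencies. This amounts to evaluating sums of products of two quadratic characters over $\F_q$ and showing the resulting four cyclotomic numbers of order $2$ collapse to give a two-valued count — a computation that works out cleanly only because $q\equiv 3\,(\mathrm{mod}\ 4)$, and getting the parametrization $q=4m^2+4m+3$ to match the arithmetic of those cyclotomic numbers (so that the two values are spaced by exactly $4$ and sit where Proposition~\ref{prop:Hadabound} wants them) is the delicate step. Everything else — the reduction to translate-intersection counts, and the appeal to Proposition~\ref{prop:Hadabound} for the excess — is formal.
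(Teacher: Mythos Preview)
Your high-level framework is correct: one negates columns indexed by some set $D\subset\F_q$ and rows indexed by a related set, and the resulting row sums are controlled by the numbers $|(C_0^{(2,q)}+s)\cap D|$ as $s$ varies. But two substantive pieces are missing or mistaken, and they are exactly where the work lies.

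First, the target row sums are $\{2m-2,2m+2\}$, not $\{2m,2m+4\}$: with $k=2m$ one has $|n-(k+2)^2|=4m<4m+4=|n-k^2|$, so Proposition~\ref{prop:Hadabound} puts us in the $\{k-2,k+2\}$ case. More importantly, the paper does \emph{not} obtain a two-valued intersection count. The set $D$ it uses is a \emph{four}-intersection set for the Paley design, with intersection sizes $\{m^2+1,\,m^2+2,\,m^2+m+1,\,m^2+m+2\}$. Biregularity is then obtained by negating not the rows indexed by $D$, but the rows indexed by the \emph{dual} set $D_{\alpha_3}^\perp\cup D_{\alpha_4}^\perp$ (those $s$ for which the intersection size is one of the two larger values). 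After this dual negation the four values collapse pairwise to $\{2m-2,2m+2\}$. Your proposal that the translate-intersection count ``takes exactly two values'' is therefore the wrong target; a two-intersection set of the required size for the Paley design is not what is produced, and your choice of row-negation set (``symmetrically $R$'') would not give biregularity.

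Second, the construction of $D$ is considerably deeper than order-$2$ (or even order-$4$) cyclotomy in $\F_q$. The paper works in the quadratic extension $\F_{q^2}$ with a multiplicative character $\chi_8$ of order $8$, takes
\[
D=\Bigl\{x\in\F_q:\,1+x\omega^\ell\in\bigcup_{i\in H}C_i^{(8,q^2)}\Bigr\}
\]
for a carefully chosen $\ell$ and a run $H$ of four consecutive residues mod $8$, and evaluates $|D|$ and the intersection numbers via Gauss sums of order $8$. The decisive arithmetic input is Theorem~\ref{thm:Gauss8}: since $q=4m^2+4m+3=(2m+1)^2+2\cdot 1^2$, one has $G_{q^2}(\chi_8)=G_q(\eta)\bigl(\epsilon(2m+1)+\delta\sqrt{-2}\bigr)$, and it is precisely this representation $q=a^2+2b^2$ with $b=1$ that makes the four intersection values come out as two adjacent pairs $\{m^2+1,m^2+2\}$ and $\{m^2+m+1,m^2+m+2\}$ separated by $m$. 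Your suggested sets of the form $\{x:\,x\ \text{and}\ x+c\ \text{both (non)squares}\}$ live entirely in order-$2$ cyclotomy of $\F_q$ and cannot see this structure; the sizes and intersection counts they produce are governed by $(q\pm 1)/4$ and do not match the parameters $(2m^2+m+2;\{m^2+1,m^2+2,m^2+m+1,m^2+m+2\})$ that are needed.
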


Next, we consider the case where $q\equiv 1\,(\mod{4})$. 
Let $M$ be a $q\times q$ $(0,1,-1)$-matrix whose rows and columns are labeled by the elements of $\F_q$ and entries are defined by 
\[
M_{i,j}=\begin{cases}
0,& \text{ if } j-i=0,\\
1,& \text{ if } j-i\in C,\\
-1,& \text{ if } j-i\in \F_q \setminus (C \cup \{0\}). 
\end{cases} 
\]
Define $M_1=M+I_q$, $M_2=M-I_q$, and $M_3=-M_1$. Furthermore, 
define 
\begin{equation}\label{eq:hada2ee}
H=\begin{pmatrix} 
1  &-1 &  {\bf 1}_q^\tra&  {\bf 1}_q^\tra  \\
-1  & -1 & {\bf 1}_q^\tra&-{\bf 1}_q^\tra \\
{\bf 1}_q  &  {\bf 1}_q &M_1 & M_2 \\
{\bf 1}_q  & -{\bf 1}_q&M_2 & M_3  
 \end{pmatrix}. 
\end{equation}
Then, $H$ forms an Hadamard matrix of order $n=2q+2$. 
We will prove the following theorem in Section~\ref{sec:const2}. 
\begin{theorem}\label{thm:main2}
If $q=2m^2+2m+1$ is a prime power, then there exists a biregular Hadamard 
matrix of order $n=4(m^2+m+1)$ with maximum excess attaining the bound of 
Proposition~\ref{prop:Hadabound}. In particular, the Hadamard matrix $H$
defined in \eqref{eq:hada2ee} can be 
transformed to be a biregular Hadamard matrix with maximum excess by negating some rows and columns of $H$.  
\end{theorem}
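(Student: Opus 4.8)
The plan is to turn Proposition~\ref{prop:Hadabound} into explicit numerology for $n=4(m^2+m+1)$ and then to realize the bound by an explicit negation of the matrix $H$ in \eqref{eq:hada2ee}. Since $m^2<m^2+m+1<(m+1)^2$ for $m\ge1$, one has $2m<\sqrt{n}<2m+2$, so $k=2m$; because $|n-k^2|=4(m+1)$ exceeds $|n-(k+2)^2|=4m$, we get $t=k-2=2m-2$; and $n((t+4)^2-n)/(8t+16)=m^2+m+1$ is already an integer, so $s=m^2+m+1$ and the bound reads $E(H)\le n(t+4)-4s=4(m^2+m+1)(2m+1)$. As $n$ is a nonsquare for $m\ge1$, equality holds exactly when the transformed matrix is biregular with row sums in $\{2m-2,2m+2\}$; by \eqref{eq:fre} the value $2m+2$ must occur $3(m^2+m+1)$ times and $2m-2$ must occur $m^2+m+1$ times, and one checks these frequencies do give excess $4(m^2+m+1)(2m+1)$.

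Next I would encode the negation as a sign vector. Negating the columns of $H$ by a $(1,-1)$-diagonal matrix $D_2$ and then negating those rows whose sum became negative produces an Hadamard matrix equivalent to $H$ whose excess equals $\sum_i|(Hv)_i|$, where $v=D_2{\bf 1}_n\in\{\pm1\}^n$; this matrix is biregular and meets the bound precisely when the multiset $\{\,|(Hv)_i|\,\}$ consists of $3(m^2+m+1)$ copies of $2m+2$ and $m^2+m+1$ copies of $2m-2$. Writing $v=(a,b,f,g)$ with $a,b\in\{\pm1\}$ and $f,g\colon\F_q\to\{\pm1\}$, and using $M_1=M+I_q$, $M_2=M-I_q$, $M_3=-M-I_q$ together with $M_{x,y}=\chi(y-x)$ (where $\chi$ is the quadratic character of $\F_q$, $\chi(0)=0$), a blockwise computation expresses the $2q+2$ entries of $Hv$ in terms of $a$, $b$, the sums $\sum_x f(x)$ and $\sum_x g(x)$, the individual values $f(x),g(x)$, and the character sums $F(x)=\sum_y\chi(y-x)f(y)$ and $G(x)=\sum_y\chi(y-x)g(y)$; for instance the block indexed by $M_1,M_2$ contributes the entries $a+b+f(x)-g(x)+F(x)+G(x)$. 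When $f$ is the $(1,-1)$-indicator of a set $D\subseteq\F_q$ one has $F(x)=-2\sum_{y\in D}\chi(y-x)$, which is controlled by the intersection numbers $|D\cap(x+C)|$; so the problem becomes the combinatorial one of choosing $D$ (a companion set for $g$, and the signs $a,b$) so that all $2q+2$ entries of $Hv$ lie in $\{\pm(2m-2),\pm(2m+2)\}$ with the required frequencies.

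To produce such a set I would use cyclotomy of order $4$ on $\F_q$, which is available since $q\equiv1\pmod 4$ (indeed $q=2m(m+1)+1$ with $m(m+1)$ even), and exploit the special form $q=m^2+(m+1)^2$, equivalently $2q-1=(2m+1)^2$: the quartic cyclotomic numbers of $\F_q$ then have closed forms in $m$ by the classical formulas, so a suitable union of quartic cyclotomic classes --- with a controlled decision about whether $0\in D$, a suitable choice of the companion set for $g$ (for instance a nonsquare dilate of $D$, which gives $G(x)=-F(\theta x)$), and a suitable choice of $a,b$ --- makes $F(x)$ and $G(x)$ take only a short list of values with computable multiplicities. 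Substituting these back into the blockwise formulas for $Hv$, one verifies that exactly $3(m^2+m+1)$ of the $2q+2$ entries have absolute value $2m+2$ and the remaining $m^2+m+1$ have absolute value $2m-2$; then the negated matrix has excess $4(m^2+m+1)(2m+1)$ and is biregular, and Proposition~\ref{prop:Hadabound} completes the proof.

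The hard part will be that last verification: choosing precisely which quartic classes (and which auxiliary signs) so that \emph{all} $2q+2$ entries of $Hv$ land in $\{\pm(2m-2),\pm(2m+2)\}$ and, more delicately, with exactly the frequencies $3(m^2+m+1)$ and $m^2+m+1$. This is a finicky bookkeeping exercise with the order-$4$ cyclotomic numbers, and I expect it to split into two cases according to the parity of $m$ (which of $m$, $m+1$ is the odd summand in $q=m^2+(m+1)^2$), with $a,b$ possibly chosen differently in the two cases. A minor separate point is $m=1$ (so $q=5$, $n=12$): there $2m-2=0$, so ``biregular'' must be read as permitting a zero row sum, which is still consistent with equality in Proposition~\ref{prop:Hadabound}.
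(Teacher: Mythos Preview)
Your framework is sound: the opening numerology is correct, and the reduction to choosing sign vectors $f,g\colon\F_q\to\{\pm1\}$ so that the character sums $F(x)=\sum_y\chi(y-x)f(y)$ and $G(x)$ take a short prescribed list of values is equivalent to the paper's language of four-intersection sets (Propositions~\ref{prop:const2} and~\ref{prop:const3}). The gap is in the construction itself. You propose to take the underlying set $D$ as a union of quartic cyclotomic classes of $\F_q$, possibly adjoining $0$. But each such class has size $(q-1)/4=m(m+1)/2$, so any such union has size $jm(m+1)/2$ or $jm(m+1)/2+1$; the required sizes (from your own blockwise analysis, or from Proposition~\ref{prop:const2}) are $m^2$ and $m^2+m$, and for most $m$ these are not of that form---already $m=3$ gives class size $6$ while $m^2=9$. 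So unions of $\F_q$-quartic classes cannot work in general, and the classical order-$4$ cyclotomic numbers of $\F_q$ you invoke do not apply.

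The paper's key idea, missing from your proposal, is to pass to the quadratic extension: the sets are $D_{\ell,H_d}=\{x\in\F_q:1+x\omega^\ell\in\bigcup_{j\in H_d}C_j^{(4,q^2)}\}$ with $\omega$ primitive in $\F_{q^2}$ and the cyclotomy taken in $\F_{q^2}$, not $\F_q$ (Theorems~\ref{thm:twoint2} and~\ref{thm:twoint3}). Gauss- and Jacobi-sum evaluations over $\F_{q^2}$ (Lemma~\ref{lem:Gaussconst}, Propositions~\ref{prop:sizeHH} and~\ref{prop:size:int}) then yield the exact sizes $m^2$ and $m^2+m$ (or $m^2+m+1$) and the required four intersection values. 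Your instinct that the argument splits on the parity of $m$ is correct, but the split enters through the shape of $J_q(\eta,\chi_4')$ in Lemma~\ref{lem:Gaussconst}, not through $\F_q$-cyclotomic numbers.
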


We also show the following theorem to obtain a regular Hadamard matrix from the matrix $H $ defined in \eqref{eq:hada2ee}. 
\begin{theorem}\label{thm:main3tr}
Let  $q=2m^2-1$ be a prime power with $m$ odd. 
Let $X_0=\{0\}$, and  assume that there are subsets $X_i$, $i=1,2,3,4$, of $\F_{q^2}$ partitioning $\F_{q^2}\setminus \{0\}$ satisfying the following conditions: 
\begin{enumerate}
\item[(1)] $X_1=\omega^{2m^2}X_3$ and  $X_2=\omega^{2m^2}X_4$, where $\omega$ is a fixed primitive element of $\F_{q^2}$;
\item[(2)] Each $X_i$,  $i=1,2,3,4$, is a union of cosets of the multiplicative subgroup of index $4m^2$ of $\F_{q^2}$; 
\item[(3)] $(\F_{q^2},\{R_i\}_{i=0}^4)$ is a four-class translation association scheme with a prescribed first eigenmatrix. Here, for $i=0,1,2,3,4$, $(x,y)\in  R_i$ if and only if $x-y \in X_i$. 
\end{enumerate}
Then, there exists a regular Hadamard 
matrix of order $n=4m^2$. In particular, the Hadamard matrix $H$
defined in \eqref{eq:hada2ee} can be 
transformed to be a regular Hadamard matrix by negating some rows and columns of $H$.  
\end{theorem}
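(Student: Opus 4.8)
The plan is to exhibit the negations explicitly and to convert regularity into a cyclotomic counting problem on $\F_{q^2}$. Negating rows and columns of $H$ replaces it by $D_1HD_2$ for $(1,-1)$-diagonal matrices $D_1,D_2$; setting $u=D_2{\bf 1}_n$ and $v=D_1{\bf 1}_n$, the resulting matrix is regular exactly when $Hu=\pm 2m\,v$, that is, when the vector $Hu$ lies in $\{2m,-2m\}^n$ (recall $n=2q+2=4m^2$ and $r=2m$). So it suffices to find one $\pm1$ column pattern $u$ with $Hu\in\{2m,-2m\}^n$ and then read off $v$. Writing $u=(a,b,y,z)^\tra$ according to the block form \eqref{eq:hada2ee}, with $a,b\in\{\pm1\}$ and $y,z\in\{\pm1\}^{\F_q}$, and using $M{\bf 1}_q=0$ (automatic, since $|C|=(q-1)/2=|\F_q\setminus(C\cup\{0\})|$) together with the fact that $q=2m^2-1\equiv1\pmod 4$ for $m$ odd so that \eqref{eq:hada2ee} is defined, the four blocks of $Hu$ reduce to $a-b+{\bf 1}_q^\tra(y+z)$, $-a-b+{\bf 1}_q^\tra(y-z)$, $(a+b){\bf 1}_q+M(y+z)+(y-z)$, and $(a-b){\bf 1}_q+M(y-z)-(y+z)$. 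Each of the two vector equations then says that a fixed signed indicator vector has constant inner product, equal to one of two prescribed values, with every additive translate $g+C$ of the quadratic--residue set: a statement about cyclotomic intersection numbers with $C$.

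To control those numbers I would pass to $\F_{q^2}$. The key arithmetic point is that the multiplicative subgroup of index $4m^2$ in $\F_{q^2}^\ast$ has order $(q^2-1)/(4m^2)=m^2-1=(q-1)/2$, hence \emph{coincides} with the group $C$ of nonzero squares of $\F_q$: it is the unique subgroup of $\F_{q^2}^\ast$ of that order, and it lies inside $\F_q^\ast$. Thus hypothesis~(2) says exactly that each $X_i$ is a union of cosets of $C$ in $\F_{q^2}^\ast$, and there are $(q^2-1)/(m^2-1)=2(q+1)=n$ such cosets. I would index the $n$ rows (and columns) of $H$ by these $C$-cosets, compatibly with the $2$-to-$1$ map $\F_{q^2}^\ast/C\to\F_{q^2}^\ast/\F_q^\ast\cong\PG(1,q)=\F_q\cup\{\infty\}$ and with the order-$2$ subgroup $\F_q^\ast/C$; these match, respectively, the $\F_q\cup\{\infty\}$-labelling and the two-halves structure of \eqref{eq:hada2ee}. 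Since $\omega^{2m^2}=\omega^{q+1}$ has multiplicative order $q-1$, it is a primitive element of $\F_q$ and hence a nonsquare, so multiplication by it interchanges $C$ with $N:=\F_q^\ast\setminus C$; thus hypothesis~(1) provides a ``twist'' of the negation pattern across the two halves rather than allowing it to respect them. Such a twist is forced: if $u=(a,a,y,y)^\tra$ is constant on the two copies then the second block of $Hu$ equals $-2a$, which is never $\pm2m$ for $m>1$. I would therefore take the column pattern $u$ (and the row pattern $v$) to be $\pm1$ patterns cut out by appropriate unions of the sets $X_i$, transported to the index set via this dictionary.

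It then remains to verify $Hu=\pm 2m\,v$. In this coordinatization, every entry of $Hu$ unwinds — via the group--developed structure of $M$ and the dictionary above — into a short $\Z$-combination of the intersection numbers $\lvert(g+X_i)\cap X_j\rvert$, equivalently of the nontrivial character sums $\sum_{x\in X_i}\chi(x)$. This is where hypothesis~(3) enters: those character sums are the entries of the first eigenmatrix of the four-class translation association scheme $(\F_{q^2},\{R_i\}_{i=0}^4)$, and the \emph{prescribed} eigenmatrix is chosen precisely so that the relevant combinations collapse, for every $g$, to the two values $\pm2m$. I would conclude by substituting the prescribed first eigenmatrix into the entrywise expressions for $Hu$ and checking the outcome case by case over the blocks — the two scalar entries, the $\F_q$-entries of the third block, and those of the fourth block — keeping $q+1=2m^2$ in hand throughout. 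The final ``in particular'' assertion is then immediate, since $D_1HD_2$ is then a regular Hadamard matrix of order $4m^2$, whence such a matrix exists.

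The main obstacle is this last verification: one must show that the eigenvalue data forces the collapse to $\{2m,-2m\}$ \emph{simultaneously} at all $n$ coordinates, and in particular that the twist encoded in hypothesis~(1) is exactly strong enough to reconcile the third-- and fourth--block equations with a single choice of $u$. A secondary technical point is to track the contribution of the identity relation $R_0$ (the element $X_0=\{0\}$, the zero diagonal of $M$) through the intersection-number identities so that the counts come out exact rather than off by bounded error terms. Everything before that — the reduction to finding $u$, the identification of the index set with $\F_{q^2}^\ast/C$, and the block bookkeeping — is routine once the coincidence $C=\langle\omega^{4m^2}\rangle$ is noticed.
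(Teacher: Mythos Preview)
Your high-level strategy matches the paper's: reduce to finding a column pattern $u$ with $Hu\in\{2m,-2m\}^n$, define $u$ using the $X_i$'s, and evaluate via the prescribed eigenmatrix. Your observation that the subgroup of index $4m^2$ in $\F_{q^2}^\ast$ coincides with the set $C$ of nonzero squares of $\F_q$ is correct and is implicitly used in the paper as well.

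The gap is the bridge you build between the additive $\F_q$-structure of $H$ and the $\F_{q^2}$-structure of the association scheme. Your ``dictionary'' is merely a bijection of index sets $\F_{q^2}^\ast/C\cong\{*_1,*_2\}\cup(\{0,1\}\times\F_q)$; it does not carry the group-developed structure of $M$ (which is governed by \emph{additive} differences $j-i$ in $\F_q$) over to the relations $R_i$ (which are governed by \emph{additive} differences in $\F_{q^2}$, restricted to $C$-cosets of $\F_{q^2}^\ast$). Consequently the assertion that each entry of $Hu$ becomes a $\Z$-combination of the $|(g+X_i)\cap X_j|$ is unjustified: there is no $g\in\F_{q^2}$ naturally attached to a row of $H$ under your labelling. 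The paper supplies exactly this missing link, and it is not formal. One fixes an integer $\ell$ with $(q+1)\nmid\ell$ and uses the affine embedding $x\mapsto 1+x\omega^\ell$ of $\F_q$ into $\F_{q^2}^\ast$; the negated columns in each half are then $D_{\ell,S_d}=\{x\in\F_q:1+x\omega^\ell\in S_d\}$ for suitable unions $S_0,S_1$ of the $X_i$'s. The entries of $Hu$ become $|D_{\ell,S_d}\cap(C_0^{(2,q)}+s)|$, and the paper's Section~3 machinery (Lemmas~3.1--3.2 and Propositions~3.4--3.6, proved via Gauss sums) converts these into the additive characters $\psi_{\F_{q^2}}\!\big((\omega^{\ell q}-\omega^\ell)^{-1}(1+\omega^{q\ell}s)\,X_i\big)$, which are precisely eigenmatrix entries once one shows that the arguments land in the dual classes $Y_j$. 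That last step forces further constraints on $\ell$ (namely $\omega^\ell\in X_2\cup X_4$ and $\omega^{\ell q}-\omega^\ell\in C_{\tau m^2}^{(4m^2,q^2)}$), whose satisfiability requires a separate counting argument. None of the $\ell$-parameter, the affine embedding, or the Gauss-sum identities appear in your sketch, and without them the entrywise verification cannot be carried out as claimed.
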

We will explain this theorem in details in Section~\ref{sec:const3}. This theorem implies that  a four-class translation association scheme on $\F_{q^2}$ produces a regular Hadamard 
matrix of order $n=2(q+1)$. Thus, we find a nontrivial relationship between 
regular Hadamard matrices and association schemes. 

\section{Preliminaries}
\subsection{Block designs and $t$-intersection sets}
In this subsection, we assume that the reader is familiar  with the basic theory of block designs. We refer the reader to \cite{BJL,J}. 

Let $P$ be a finite set of $v$ elements, called {\it points}, and ${\mathcal B}$ be a family of $b$ subsets of $P$, called {\it blocks}. We define ${\mathcal F}=\{(p,B)\in P\times {\mathcal B}\,:\,p\in B\}$. Elements in ${\mathcal F}$ are called {\it flags}.  The triple 
$(P,{\mathcal B},{\mathcal F})$ is called a {\it block design}. 
We say that $({\mathcal B},P,{\mathcal F}^\perp)$ with 
${\mathcal F}^\perp=\{(B,p)\,:\,(p,B)\in {\mathcal F}\}$ is the {\it dual} of $(P,{\mathcal B},{\mathcal F})$. 
For convenience, we also say that the pair $(P,{\mathcal B})$ is  a {\it block design}. 
We denote by $M$ the incidence matrix of a block design $(P,{\mathcal B})$, whose $(p,B)$th entry is $1$ if  $(p,B)\in {\mathcal F}$ (equivalently, $P\in B$) and $0$ otherwise. 

A block design $(P,{\mathcal B})$ with incidence matrix $M$ is called a {\it pairwise balanced design} if there is a positive integer $\lambda$ such that 
each off-diagonal entry  of  $MM^\tra$ is to $\lambda$. In addition, if $M {\bf 1}_b=r {\bf 1}_v$ for some integer $r$, it is called an {\it $(r,\lambda)$-design}. 
Furthermore, if an  $(r,\lambda)$-design satisfies that ${\bf 1}_v^\tra M=k {\bf 1}_b$, it is called a {\it $2$-design}.  In particular, if $v=b$, it is called 
{\it symmetric}.  

Let $H$ be a regular Hadamard matrix of order $n=4m^2$. Then, the $(0,1)$-matrix $M=(-H+J_n)/2$ satisfies that 
\[
MM^\tra=\frac{HH^\tra-HJ_{n}-J_{n}H^\tra+J_{n}J_{n}}{4}=m^2I_{n}+(m^2-m)J_{n}, 
\]
where $J_n$ is the $n\times n$ all-one matrix. 
This implies that 
$M$ is an incidence matrix of a 
symmetric  $2$-design with parameters $(v,k,\lambda)=(4m^2,2m^2-m,m^2-m)$, 
the so-called {\it Hadamard design}. 

Let $H$ be a biregular Hadamard matrix of order $n$.  We can assume that 
 \begin{align*}
H{\bf 1}_n=\begin{pmatrix} k_1 {\bf 1}_{m_1}\\ k_2 {\bf 1}_{m_2} \end{pmatrix}
\end{align*}
by suitably permuting rows.  
Let $H_1$ (resp. $H_2$) be the upper $m_1\times n$ (resp. lower $m_2\times n$) matrix of $H$. Then, each
$M_i=(-H_i+J_{m_i,n})/2$, $i=1,2$, satisfies that  
\[
M_iM_i^\tra=\frac{nH_iH_i^\tra-H_iJ_{m_i,n}^\tra-J_{m_i,n}H_i^\tra+J_{m_i,n}J_{m_i,n}^\tra}{4}=\frac{nI_{m_i}+(n-2k_i)J_{m_i}}{4}, 
\]
where $J_{m_i,n}$ is the $m_i\times n$ all-one matrix. 
Hence, each $M_i$  is the incidence matrix of a pairwise balanced design. In particular, each $M_i$ satisfies that $M_i{\bf 1}_n=(n-k_i)/2$. Hence, it  is  an $((n-k_i)/2,(n-2k_i)/4)$-design. 
Furthermore, $M_i$'s satisfy that 
\[
M_1M_2^\tra=\frac{H_1H_2^\tra-H_1J_{m_2,n}^\tra-J_{m_1,n}H_2^\tra+J_{m_1,n}J_{m_2,n}^\tra}{4}=\frac{(n-k_1-k_2)J_{m_1,m_2}}{4}. 
\]
Thus, biregular 
Hadamard matrices also have interesting properties 
in view of design theory. 

We introduce the concept of $t$-intersection sets for block designs.  
We refer the reader to \cite{MS} for details of two-intersection sets. 
Let $(P,{\mathcal B})$ be a block design with incidence matrix $M$ and  $D$ be a $j$-subset of $P$. We say that $D$ is a 
{\it $t$-intersection set with parameters $(j;\{\alpha_1,\alpha_2,\ldots,\alpha_t\})$ for $(P,{\mathcal B})$} if  
\[
\{|B\cap D|\,:\,B\in {\mathcal B}\}=\{\alpha_1,\alpha_2,\ldots,\alpha_t\}. 
\]
Let ${\bf x}$ be the $(0,1)$-vector of length $v$, whose $i$th entry is $1$ if $i\in D$ and $0$ otherwise. We call ${\bf x}$ the {\it support vector} of $D$ in $P$. 
Then, $D$ is a $t$-intersection set with parameters 
$(j;\{\alpha_1,\alpha_2,\ldots,\alpha_t\})$ for $(P,{\mathcal B})$ if and only if 
all entries of ${\bf x}^\tra M$ are in  $\{\alpha_1,\alpha_2,\ldots,\alpha_t\}$ and ${\bf x}^\tra {\bf 1}_{v}=j$.  
We define the {\it duals} of $D$ as 
\begin{equation}\label{eq:defdual}
D_{\alpha_i}^\perp=\{B\,:\,B\in {\mathcal B},\,|B\cap D|=\alpha_i\}, i=1,2,\ldots,t. 
\end{equation}
In this paper, we treat $t$-intersection sets with $t\le 4$ for a block design 
obtained from translations of the set of squares in the finite field. In particular, we will transform a (non-regular) Hadamard matrix to a regular  or biregular Hadamard matrix by negating columns and rows  corresponding to a  $t$-intersection set and its dual, respectively.

\subsection{Association schemes}
In this subsection, we give a short introduction to association schemes. 
We refer the reader to \cite{BI} for the general theory of association schemes. 

Let $X$ be a finite set,  and a set of 
relations  $R_0, R_1,\ldots,R_d$ be a partition 
of $X\times X$ such that $R_0=\{(x,x)\,:\,x\in X\}$, and 
for each $i\in \{0,1,\ldots,d\}$, ${}^tR_i=R_{i'}$ for some $i'\in \{0,1,\ldots,d\}$, where ${}^t R_i=\{(x,y)\in X^2\,:\,(y,x)\in R_i\}$. 
We call $(X,\{R_i\}_{i=0}^d)$ a {\it $d$-class association scheme} if for all $i,j,k\in \{0,1,\ldots,d\}$ there is an integer $p_{i,j}^k$ such that for all $(x,y)\in R_k$, 
\[
|\{z\in X\,|\,(x,z)\in R_i,\,(z,y)\in R_j)\}|=p_{i,j}^k.
\] 
These constants are called {\it intersection numbers}. 
If $p_{i,j}^k=p_{j,i}^k$ for all $h,i,j\in \{0,1,\ldots,d\}$, it is called {\it commutative}. 
If ${}^tR_i=R_{i}$ for all $i\in \{0,1,2,\ldots,d\}$, then  it is called {\it symmetric}. A symmetric association scheme is commutative. In this paper, we will treat symmetric association schemes.  

We denote by $A_i$ the adjacency matrix of $R_i$ for each $i$, whose $(x,y)$th entry is $1$ if $(x,y)\in R_i$ and $0$ otherwise. 
The condition above is equivalent to that 
\[
A_iA_j=\sum_{k=0}^d p_{i,j}^k A_k. 
\]  
The {\it Bose-Mesner algebra} for an association scheme $(X,\{R_i\}_{i=0}^d)$ is defined as ${\mathcal A}=\langle A_0,A_1,\ldots,A_d\rangle$. Since each $R_i$ is symmetric, ${\mathcal A}$ is commutative. Then, there exists a
set of minimal idempotents $E_0=\frac{1}{|X|}J_{d+1},E_1,\ldots,E_d$, which also form a basis of the algebra. The matrix $P$ such that 
\[
(A_0,A_1,\ldots,A_d)=(E_0,E_1,\ldots,E_d)P
\]
is called the {\it first eigenmatrix} (or {\it character table}) of $(X,\{R_i\}_{i=0}^{d})$.  On the other hand, 
the matrix $Q$ such that 
\[
(A_0,A_1,\ldots,A_d)Q=|X|(E_0,E_1,\ldots,E_d)
\]
is called the {\it second eigenmatrix} of $(X,\{R_i\}_{i=0}^{d})$.

A {\it translation association scheme} is an association scheme $(X,\{R_i\}_{i=0}^{d})$ for which the underlying set $X$ is identified with 
an abelian group, and for all relations $R_i$'s, $(x,y)\in R_i$ implies $(x+z,y+z)\in R_i$ for all $z\in X$. Then, there is a partition $D_0=\{0\},D_1,\ldots,D_d$ of $X$  such that for each $i=0,1,\ldots,d$, 
\[
R_i=\{(x,x+y)\,:\,x\in X,y\in D_i\}. 
\]
For a translation association scheme $(X,\{R_i\}_{i=0}^{d})$, there is an 
equivalence relation  defined on the character group $X^\perp$ of $X$ 
as follows: $\chi\sim \chi'$ if and only if $\chi(D_i)=\chi'(D_i)$ for all $i=0,1,\ldots,d$. Here,  $\chi(D_i):=\sum_{x\in D_i}\chi(x)$. Denote by $D_0',D_1'\ldots,D_d'$ the equivalence classes, where $D_0'$ consists of only the trivial character. Define the relation $R_i'$ on $X^\perp$ as  
\[
R_i'=\{(\chi,\chi \chi')\,:\,\chi \in X^\perp,\chi'\in D_i'\}. 
\]  
Then, $(X^\perp,\{R_i'\}_{i=0}^d)$ forms a translation association scheme,
called the {\it dual} of $(X,\{R_i\}_{i=0}^{d})$. We remark that the entries of 
the $i$th column of the  first eigenmatrix of $(X,\{R_i\}_{i=0}^{d})$ are given by 
the $d+1$ character values $\chi(D_i)$, where $\chi\in D_j'$, $j=0,1,2\ldots,d$. 
Furthermore, 
the first eigenmatrix of the dual scheme is equal to the second eigenmatrix of  $(X,\{R_i\}_{i=0}^{d})$. 

Given two association schemes $(X,\{R_i\}_{i=0}^d)$ and $(X,\{R_i'\}_{i=0}^e)$, 
if for each $i=0,1,\ldots,d$,  $R_i\subseteq R_j'$ for some $j$, then 
$(X,\{R_i\}_{i=0}^d)$ is called a {\it fission scheme} of $(X,\{R_i'\}_{i=0}^e)$, and 
$(X,\{R_i'\}_{i=0}^e)$ is called a {\it fusion scheme} of $(X,\{R_i\}_{i=0}^d)$. 
We will use the following well-known criteria due to Bannai~\cite{Ba} and 
Muzychuk~\cite{Mu}, called the {\it Bannai-Muzychuk criterion}. 
 \begin{proposition} \label{prop:BM}
Let $P$ be the first eigenmatrix of an association scheme $(X,\{R_i\}_{i=0}^d)$, and let $\Lambda_0:=\{0\}, \Lambda_1,\ldots,\Lambda_e$, be a partition of $\{0,1,\ldots,d\}$. Then, $(X,\{\bigcup_{j\in \Lambda_i}R_j\}_{i=0}^e)$ forms an association scheme if and only if 
there exists a partition $\{\Delta_0=\{0\},\Delta_1,\ldots,\Delta_e\}$ of 
$\{0,1,\ldots,d\}$ such that $(\Delta_i,\Lambda_j)$-block of $P$ has a 
constant row sum. Moreover, the constant row sum of the  $(\Delta_i,\Lambda_j)$-block is the $(i,j)$th entry of the first eigenmatrix of 
$(X,\{\bigcup_{j\in \Lambda_i}R_j\}_{i=0}^e)$. 
%
\end{proposition}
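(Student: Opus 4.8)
The plan is to translate the statement into the representation theory of the Bose--Mesner algebra $\mathcal A=\langle A_0,\dots,A_d\rangle$ and reduce everything to the structure of unital subalgebras of a product of fields. Write $\tilde A_k=\sum_{j\in\Lambda_k}A_j$ for $k=0,1,\dots,e$, so that $\tilde A_0=I$, the $\tilde A_k$ are symmetric $(0,1)$-matrices with pairwise disjoint supports, and $\sum_k\tilde A_k=J$. The first thing to record is that, because of the disjoint-support property, the fused relations $\bigcup_{j\in\Lambda_k}R_j$ form an association scheme if and only if the span $\tilde{\mathcal A}=\langle \tilde A_0,\dots,\tilde A_e\rangle$ is closed under ordinary matrix multiplication: closure under the Hadamard product is automatic ($\tilde A_i\circ\tilde A_j=\delta_{ij}\tilde A_i$), $I$ and $J$ already lie in $\tilde{\mathcal A}$, each $\tilde A_k$ is symmetric, and if $\tilde A_i\tilde A_j=\sum_k c_k\tilde A_k$ then evaluating both sides on the support of $\tilde A_k$ shows each $c_k$ is a constant nonnegative integer, i.e. an intersection number. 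So the whole problem is to decide when $\tilde{\mathcal A}$ is a subalgebra.

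Next I would pass to spectral coordinates. Since $A_j=\sum_i P_{ij}E_i$, we obtain
\[
\tilde A_k=\sum_{i=0}^d\Big(\sum_{j\in\Lambda_k}P_{ij}\Big)E_i=\sum_{i=0}^d\rho_i^{(k)}E_i,\qquad \rho_i^{(k)}:=\sum_{j\in\Lambda_k}P_{ij},
\]
so $\rho_i^{(k)}$ is exactly the sum of the entries in row $i$ of $P$ over the columns indexed by $\Lambda_k$. Using the isomorphism $\mathcal A\cong\C^{d+1}$ that sends a matrix to its tuple of eigenvalues (coordinatewise product $\leftrightarrow$ matrix product, since the $E_i$ are orthogonal idempotents), $\tilde A_k$ corresponds to the vector $(\rho_0^{(k)},\dots,\rho_d^{(k)})$ and $I$ corresponds to the all-ones vector $\mathbf{1}$. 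Thus $\tilde{\mathcal A}$ is a subalgebra of $\mathcal A$ exactly when the subspace $V\subseteq\C^{d+1}$ spanned by the vectors corresponding to $\tilde A_0,\dots,\tilde A_e$ --- which contains $\mathbf{1}$ since $\tilde A_0=I$ --- is closed under coordinatewise multiplication, that is, is a unital subalgebra of $\C^{d+1}$.

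The key structural input is the classical fact that a unital subalgebra of $\C^{d+1}$ is precisely the set of vectors constant on the blocks of some partition $\Delta_0,\dots,\Delta_{e'}$ of $\{0,1,\dots,d\}$, and that its dimension equals $e'+1$. With this both implications follow. For the forward direction, if $\tilde{\mathcal A}$ is a subalgebra then (the $\tilde A_k$ being linearly independent by disjointness of supports) $V$ has dimension $e+1$, hence equals the algebra of vectors constant on the blocks of a partition $\Delta_0,\dots,\Delta_e$; ``constant on blocks'' says precisely that $\rho_i^{(k)}$ depends only on the block of $i$, i.e. every $(\Delta_a,\Lambda_k)$-block of $P$ has constant row sum. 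For the converse, given such a partition I would set $\tilde E_a=\sum_{i\in\Delta_a}E_i$, which are nonzero orthogonal idempotents summing to $I$; writing $\rho^{(k)}_{(a)}$ for the common value of $\rho_i^{(k)}$ on $\Delta_a$, the hypothesis gives $\tilde A_k=\sum_a\rho^{(k)}_{(a)}\tilde E_a\in\langle \tilde E_0,\dots,\tilde E_e\rangle$. A dimension count ($e+1$ on both sides) forces $\tilde{\mathcal A}=\langle \tilde E_0,\dots,\tilde E_e\rangle$, which is an algebra, so $\tilde{\mathcal A}$ is closed under multiplication and the fusion is a scheme. In either case, reading off the eigenvalue of $\tilde A_k$ on $\tilde E_a$ shows that the common row sum of the $(\Delta_a,\Lambda_k)$-block is the $(a,k)$-entry of the first eigenmatrix of the fusion, giving the ``Moreover'' clause; the normalization $\Delta_0=\{0\}$ is forced because the trivial idempotent $\frac1{|X|}J=E_0$ of the fusion scheme must be one of the $\tilde E_a$, and $\sum_{i\in\Delta_a}E_i=E_0$ with the $E_i$ independent forces that block to be $\{0\}$.

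I expect the main obstacle to be the careful handling of the first reduction --- proving that closure of $\tilde{\mathcal A}$ under matrix multiplication really is equivalent to the existence of well-defined intersection numbers for the fused relations --- together with the bookkeeping that identifies ``constant on the blocks $\Delta_a$'' with ``each $(\Delta_a,\Lambda_k)$-block of $P$ has constant row sum'' in a way that simultaneously produces the fusion eigenmatrix. The abstract subalgebra--partition correspondence is standard; the real work is matching it precisely to the combinatorial row-sum condition and verifying that the idempotents $\tilde E_a$ have all the required properties.
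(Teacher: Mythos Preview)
The paper does not prove this proposition: it is stated as the well-known \emph{Bannai--Muzychuk criterion} and attributed to \cite{Ba} and \cite{Mu} without argument. So there is no ``paper's own proof'' to compare against.

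Your argument is the standard one and is correct. The reduction to ``$\tilde{\mathcal A}$ is closed under matrix multiplication'' is clean, the passage to eigen-coordinates $\tilde A_k\mapsto(\rho_0^{(k)},\dots,\rho_d^{(k)})$ is exactly right, and the structural fact you invoke---that unital subalgebras of $\C^{d+1}$ (coordinatewise product) are precisely the algebras of vectors constant on the blocks of a partition, with dimension equal to the number of blocks---is the heart of the matter. The dimension count in the converse direction and the identification of the fusion eigenmatrix entries with the common row sums are handled correctly. Your justification of $\Delta_0=\{0\}$ is fine; an equivalent and perhaps slightly quicker way to see it is that $\frac{1}{|X|}J\in\tilde{\mathcal A}$ corresponds to the vector $(1,0,\dots,0)$, which can be constant on a block only if that block is $\{0\}$. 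The only cosmetic point is that you are implicitly using the paper's standing assumption that the scheme is symmetric (so each $\tilde A_k$ is symmetric and commutativity is automatic); this is consistent with how the proposition is used in the paper.
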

\subsection{Characters of finite fields}\label{sec:31}
In this subsection, we will assume that the reader is familiar with the basic theory of characters of finite fields.

Let $p$ be a prime, $f$ a positive integer, and set $q=p^f$. For a positive integer $m$, let $\zeta_m=\exp(\frac{2\pi i}{m})$ denote the primitive $m$th root of unity. 
Let $\F_q$ denote the finite field of order $q$ and $\F_q^\ast$ be the multiplicative group of $\F_q$. For a positive integer 
$e$ dividing $q-1$ and a fixed primitive element $\omega$ of $\F_q$, define \[
C_i^{(e,q)}:=\omega^i\langle \omega^e\rangle, \, \, i=0,1,\ldots,e-1, 
\]
where the subscript $i$ is taken modulo $e$. 
The canonical additive character $\psi_{\F_q}$ of $\F_q$ is defined by 
\[
\psi_{\F_q}(x)=\zeta_p^{\Tr_{q/p}(x)}, \, \, x\in \F_q, 
\]
where $\Tr_{q/p}$ is the absolute trace from $\F_q$ to $\F_p$ defined by 
\[
\Tr_{q/p}(x)=x+x^p+x^{p^2}+\cdots+x^{p^{f-1}}. 
\]  
Define $R_0=\{(x,x)\,:\,x\in \F_q\}$ and $R_i:=\{(x,y)\,:\,x-y \in C_{i-1}^{(e,q)}\}$, $i=1,2,\ldots,e$. Then, $(\F_{q},\{R_i\}_{i=0}^e)$ is  an $e$-class 
translation association scheme, called the {\it cyclotomic scheme}. The first 
eigenmatrix $P$ of the $e$-class cyclotomic scheme  is given by 
the $(e+1)\times (e+1)$ matrix (with the rows of $P$ arranged in a certain way)
\[
P=\begin{pmatrix} 
1  &k &  k&  \cdots &k  \\
1  & \eta_0 & \eta_1&\cdots &\eta_{e-1} \\
1  & \eta_1 & \eta_2&\cdots &\eta_{0} \\
\vdots  &\vdots &\vdots & \cdots & \vdots \\
1  & \eta_{e-1} & \eta_0&\cdots &\eta_{e-2}
 \end{pmatrix}, 
\]
where $k=\frac{q-1}{e}$ and $\eta_i$, $i=0,1,\ldots,e-1$, are given by 
\[
\eta_i=\sum_{x\in C_i^{(e,q)}}\psi_{\F_q}(x), 
\]
the called {\it Gauss periods} of order $e$ of $\F_q$.  

For a multiplicative character
$\chi$ and the canonical additive character $\psi_{\F_q}$ of $\F_q$, we define the {\it Gauss sum} by
\[
G_q(\chi)=\sum_{x\in \F_q^\ast}\chi(x)\psi_{\F_q}(x) \in \Z[\zeta_{q-1},\zeta_p].
\]
We will use the following basic properties of Gauss sums without preamble. 
\begin{enumerate}
\item[(i)] $G_q(\chi)\overline{G_q(\chi)}=q$ if $\chi$ is nontrivial;
\item[(ii)] $G_q(\chi^{-1})=\chi(-1)\overline{G_q(\chi)}$;
\item[(iii)] $G_q(\chi)=-1$ if $\chi$ is trivial.
\end{enumerate}
For a  nontrivial multiplicative character $\chi$ of $\F_q$ and $x\in \F_q^\ast$, by the orthogonality of characters~\cite[P. 195, (5.16)]{LN97}, it holds that   
\begin{equation}\label{eq:oth_g}
\chi(x)=\frac{\chi(-1)G_q(\chi)}{q}\sum_{a\in \F_q^\ast}\chi^{-1}(a)\psi_{\F_q}(ax). 
\end{equation}
On the other hand, the Gauss period $\psi_{\F_q}(C_i^{(e,q)})$ can be expressed as a linear combination of Gauss sums:
\begin{equation}
\psi_{\F_q}(C_i^{(e,q)})=\frac{1}{e}\sum_{j=0}^{e-1}G_q(\chi^{j})\chi^{-j}(\omega^i), \; 0\le i\le e-1,
\end{equation}
where $\chi$ is a multiplicative character of order $e$ of $\F_q$.  For example, if $e=2$, 
we have
\begin{equation}\label{eq:Gaussquad}
\psi_{\F_q}(C_i^{(2,q)})=\frac{-1+(-1)^iG_q(\eta)}{2},\; \quad i=0,1,
\end{equation}
where $\eta$ is the quadratic character of $\F_q$. In particular, the quadratic 
Gauss sum is explicitly evaluated as follows.  
\begin{theorem}\cite[Theorem~5.15]{LN97} \label{thm:Gauss}
Let $q=p^s$ be a prime power with $p$ a prime and $\eta$ be the quadratic character of $\F_q$. 
Then, 
\begin{equation}\label{eq:Gaussquad1}
G_q(\eta)=\begin{cases}
(-1)^{s-1}q^{1/2},& \text{ if }  p\equiv 1\,(\mod{4}), \\
(-1)^{s-1}\zeta_4^s q^{1/2}, & \text{ if } p\equiv 3\,(\mod{4}). 
\end{cases}
\end{equation}
\end{theorem}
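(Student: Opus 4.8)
The plan is to determine $G_q(\eta)$ in two stages: first pin down its square (hence its absolute value), which is elementary, and then fix the remaining sign/phase by reducing to the prime field $\F_p$ and lifting via the Davenport--Hasse relation. The genuine difficulty sits entirely in the prime-field base case, which is the classical sign problem of Gauss.

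First I would compute $G_q(\eta)^2$. Since $\eta$ takes only the values $\pm 1$ it is real-valued, so complex conjugation gives $\overline{G_q(\eta)}=\sum_{x\in \F_q^\ast}\eta(x)\psi_{\F_q}(-x)=\eta(-1)G_q(\eta)$ after the substitution $x\mapsto -x$. Combining this with property (i), $G_q(\eta)\overline{G_q(\eta)}=q$, yields $\eta(-1)\,G_q(\eta)^2=q$, that is, $G_q(\eta)^2=\eta(-1)\,q$ (using $\eta(-1)^{-1}=\eta(-1)$). Because $-1$ is a square in $\F_q$ precisely when $q\equiv 1\,(\mod 4)$, this already forces $G_q(\eta)\in\{\pm q^{1/2}\}$ when $q\equiv 1\,(\mod 4)$ and $G_q(\eta)\in\{\pm \zeta_4 q^{1/2}\}$ when $q\equiv 3\,(\mod 4)$. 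Everything past this point is the determination of the correct sign.

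Next I would treat the base case $s=1$, $q=p$. Writing $g=\sum_{x\in \F_p^\ast}\eta(x)\zeta_p^x$ and noting that each nonzero $a$ is hit by $1+\eta(a)$ solutions $j$ of $j^2=a$, one checks $g=\sum_{j=0}^{p-1}\zeta_p^{j^2}=G_p(\eta)$. I would then run Schur's eigenvalue argument: form the $p\times p$ matrix $F=(\zeta_p^{jk})_{0\le j,k\le p-1}$, verify $F^2=pR$ where $R$ is the permutation matrix of $j\mapsto -j\,(\mod p)$, and deduce $F^4=p^2 I$, so the eigenvalues of $F$ lie in $p^{1/2}\{1,-1,\zeta_4,-\zeta_4\}$ with some multiplicities $a,b,c,d$. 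The relations $a+b+c+d=p$ and $\Tr(F^2)=p\,\Tr(R)=p$ (as $R$ has the single fixed point $j=0$) give $a+b=(p+1)/2$ and $c+d=(p-1)/2$; a Vandermonde evaluation of $\det F$ supplies the remaining relation needed to fix $a-b$ and $c-d$, and matching $\Tr(F)=g$ then reads off $G_p(\eta)=p^{1/2}$ for $p\equiv 1\,(\mod 4)$ and $G_p(\eta)=\zeta_4\,p^{1/2}$ for $p\equiv 3\,(\mod 4)$. This multiplicity-and-determinant bookkeeping is the crux of the whole theorem and the step I expect to be the main obstacle; it is exactly the point at which the sign of the Gauss sum is notoriously delicate.

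Finally I would lift to arbitrary $s$ using the Davenport--Hasse relation $-G_{p^s}(\eta\circ \Norm_{\F_{p^s}/\F_p})=(-G_p(\eta))^s$. Since the norm of a primitive element of $\F_{p^s}$ is a primitive element of $\F_p$, the lifted character $\eta\circ \Norm_{\F_{p^s}/\F_p}$ sends a generator of $\F_{p^s}^\ast$ to $-1$ and hence equals the quadratic character of $\F_{p^s}$. Substituting the base-case values then gives, for $p\equiv 1\,(\mod 4)$, $-G_q(\eta)=(-p^{1/2})^s=(-1)^s q^{1/2}$, whence $G_q(\eta)=(-1)^{s-1}q^{1/2}$; and for $p\equiv 3\,(\mod 4)$, $-G_q(\eta)=(-\zeta_4 p^{1/2})^s=(-1)^s\zeta_4^s q^{1/2}$, whence $G_q(\eta)=(-1)^{s-1}\zeta_4^s q^{1/2}$, which are exactly the two cases of \eqref{eq:Gaussquad1}. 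The lift is purely formal once the base case is in hand, so the only real work is the prime-field sign.
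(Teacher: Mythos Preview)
Your outline is correct and follows the standard route: reduce to $G_q(\eta)^2=\eta(-1)q$, settle the prime case $s=1$ by Schur's eigenvalue argument for the DFT matrix, and then lift via Davenport--Hasse. The identification $\sum_{j=0}^{p-1}\zeta_p^{j^2}=G_p(\eta)$ and the trace/determinant bookkeeping for the multiplicities $a,b,c,d$ are right in spirit; the only place to be careful is the Vandermonde evaluation of $\det F$, where one must track the argument of $\prod_{0\le j<k\le p-1}(\zeta_p^k-\zeta_p^j)$ modulo $2\pi$ to extract the correct combination of $(-1)^b i^{c-d}$, but this is a known (if slightly fiddly) computation.

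As for comparison with the paper: there is nothing to compare. The paper does not prove this statement; it is quoted verbatim as \cite[Theorem~5.15]{LN97} and used as a black box. So your proposal is not an alternative to the paper's argument but rather a self-contained proof where the paper offers none. If you want your write-up to match the paper's level of detail, a one-line citation suffices; if you want an actual proof, your sketch is the right one, and the Davenport--Hasse step can in fact be cited from the paper itself (it appears there as Theorem~\ref{thm:lift}).
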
 
We will use the following formula on Gauss sums of order $8$ of $\F_{q^2}$. 
\begin{theorem}\cite[Theorem~1.5]{Mv} \label{thm:Gauss8}
Let $q=p^f\equiv 3\,(\mod{8})$ be a prime power with $p$ a prime and  $\chi_8$ be a multiplicative character of $\F_{q^2}$. Furthermore, let $\eta$ be the quadratic character of $\F_q$. Then,  $G_{q^2}(\chi_8)/G_q(\eta)\in \Z[\sqrt{-2}]$. In particular, if $q=a^2+2b^2$ is a proper representation of $q$ with $a,b\in\Z$ and $p\not | a+b\sqrt{-2}$. Then, 
\begin{equation}\label{eq:Gaussquad1}
G_{q^2}(\chi_8)=G_q(\eta)(
a+b\sqrt{-2}), 
\end{equation}
where the signs of $a,b$ are 
ambiguously determined. 
\end{theorem}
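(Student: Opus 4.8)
\proof \emph{(Sketch.)} The plan is to isolate the arithmetic content of $G_{q^2}(\chi_8)$ inside the imaginary quadratic field $\Q(\sqrt{-2})$ by a Galois-descent argument, and then to pin the value down up to sign by Stickelberger's theorem. First I would record the arithmetic of $q$. Since $q=p^f\equiv 3\,(\mod{8})$ forces $p\equiv 3\,(\mod{8})$ and $f$ odd, we have $8\mid q^2-1$ (so a character $\chi_8$ of order $8$ of $\F_{q^2}$ exists), and $-2$ is a square modulo $p$, so $p$ splits in the ring of integers $\Z[\sqrt{-2}]$ of $\Q(\sqrt{-2})$ as $p=\pi\overline\pi$. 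Writing $\pi^f=a+b\sqrt{-2}$ gives $\Norm_{\Q(\sqrt{-2})/\Q}(\pi^f)=p^f=q$, i.e.\ $q=a^2+2b^2$. By property (i) we have $|G_{q^2}(\chi_8)|=q$ and $|G_q(\eta)|=\sqrt q$, so $\beta:=G_{q^2}(\chi_8)/G_q(\eta)$ satisfies $\beta\overline\beta=q$; this is exactly the target norm, consistent with $\beta=\pm(a+b\sqrt{-2})$.

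Next I would show $\beta\in\Q(\sqrt{-2})$ by descending inside $\Q(\zeta_8,\zeta_p)$, where $\beta$ a priori lives. For $c\in(\Z/p\Z)^\ast$ let $\tau_c$ fix $\zeta_8$ and send $\zeta_p\mapsto\zeta_p^c$. A change of variable in the defining sums gives $\tau_c(G_{q^2}(\chi_8))=\chi_8(c)^{-1}G_{q^2}(\chi_8)$ and $\tau_c(G_q(\eta))=\eta(c)G_q(\eta)$. Since $p\equiv 3\,(\mod{8})$ we have $\gcd(8,p-1)=2$, so the restriction of $\chi_8$ to $\F_p^\ast$ is the unique quadratic character of $\F_p^\ast$, which (using that $f$ is odd) also equals the restriction of $\eta$; as these values are $\pm1$, $\chi_8(c)^{-1}=\eta(c)$, whence $\tau_c(\beta)=\beta$ and $\beta\in\Q(\zeta_8)$. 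Now let $\sigma\in\Gal(\Q(\zeta_8)/\Q)$ be $\zeta_8\mapsto\zeta_8^3$; since $\sigma$ fixes $\sqrt{-2}=\zeta_8+\zeta_8^3$ while negating $\zeta_8+\zeta_8^{-1}$ and $\zeta_8^2$, its fixed field is $\Q(\sqrt{-2})$. Here $\sigma(G_{q^2}(\chi_8))=G_{q^2}(\chi_8^3)$, and because $x\mapsto x^p$ preserves $\Tr_{q^2/p}$ we have the Frobenius invariance $G_{q^2}(\chi_8^p)=G_{q^2}(\chi_8)$, which for $p\equiv 3\,(\mod{8})$ reads $G_{q^2}(\chi_8^3)=G_{q^2}(\chi_8)$. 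As $\sigma$ fixes $G_q(\eta)\in\Z[\zeta_p]$, we get $\sigma(\beta)=\beta$, hence $\beta\in\Q(\sqrt{-2})$.

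It remains to prove $\beta$ is an algebraic integer; granting this, $\beta$ lies in the maximal order $\Z[\sqrt{-2}]$, so $\beta=a'+b'\sqrt{-2}$ with $a',b'\in\Z$ and $a'^2+2b'^2=\beta\overline\beta=q$. To control integrality and to identify $\beta$ precisely, I would invoke Stickelberger's theorem on the prime-ideal factorization of $G_{q^2}(\chi_8)$, using the Davenport--Hasse lift $G_{q^2}(\chi_8^4)=G_{q^2}(\eta\circ\Norm_{q^2/q})=-G_q(\eta)^2=q$ to absorb the normalizing factor $G_q(\eta)$. The Stickelberger exponents show that the ideal $(\beta)$ of norm $q$ is a \emph{proper} power, namely $(\pi^f)=(a+b\sqrt{-2})$ with $p\nmid a+b\sqrt{-2}$, rather than a mixed product $(\pi)^i(\overline\pi)^j$ with $i,j\geq 1$ (which would be divisible by $(p)$). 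Since $\Z[\sqrt{-2}]$ has class number $1$ and unit group $\{\pm1\}$, this forces $\beta=\pm(a+b\sqrt{-2})$, that is $G_{q^2}(\chi_8)=G_q(\eta)(a+b\sqrt{-2})$, with the signs of $a,b$ ambiguous (the sign of $b$ encoding the choice of $\pi$ versus $\overline\pi$).

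The main obstacle is the last step: the Galois descent is routine once the two group actions are identified, but establishing integrality of $\beta$ and computing the Stickelberger exponents sharply enough to rule out a spurious factor of $p$ (equivalently, to guarantee that $q=a^2+2b^2$ is the proper representation) is the technical heart of the argument, and it is here that the hypothesis $q\equiv 3\,(\mod{8})$ is genuinely used, through the fact that $\langle p\rangle$ is a subgroup of index $2$ in $(\Z/8\Z)^\ast$ whose fixed field is precisely $\Q(\sqrt{-2})$. \eproof
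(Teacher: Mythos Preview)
The paper does not give its own proof of this theorem: it is quoted as \cite[Theorem~1.5]{Mv}, and the only argument the paper supplies is the one-line remark following the statement, namely that the prime case proved in \cite{Mv} extends to prime powers because $\Z[\sqrt{-2}]$ is a unique factorization domain. There is therefore nothing in the paper to compare your sketch against beyond that remark.

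That said, your outline is the standard route and is essentially sound. The reductions $q\equiv 3\,(\mod 8)\Rightarrow p\equiv 3\,(\mod 8)$ with $f$ odd, the splitting of $p$ in $\Z[\sqrt{-2}]$, and the two Galois-descent steps (first by $\tau_c$ to land in $\Q(\zeta_8)$, then by $\sigma:\zeta_8\mapsto\zeta_8^3$ using Frobenius invariance $G_{q^2}(\chi_8^p)=G_{q^2}(\chi_8)$ to land in $\Q(\sqrt{-2})$) are all correct as you wrote them. Your identification of $\chi_8|_{\F_p^\ast}$ with $\eta|_{\F_p^\ast}$ via $\gcd(8,p-1)=2$ and $f$ odd is exactly the point that makes the first descent work. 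You are also right that the genuine work is the integrality/Stickelberger step that rules out a mixed factorization $(\pi)^i(\overline\pi)^j$ and pins down $\beta$ up to a unit; this is precisely where \cite{Mv} does the computation, and your appeal to class number one and $\Z[\sqrt{-2}]^\times=\{\pm1\}$ is what the paper's remark is pointing at for the extension to prime powers. One small clarification: when you apply $\sigma$ you should say you lift it to $\Gal(\Q(\zeta_8,\zeta_p)/\Q)$ by fixing $\zeta_p$, so that $\sigma$ visibly fixes $G_q(\eta)\in\Z[\zeta_p]$; you do this implicitly, but it is worth making explicit.
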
 
Note that only the case where $q$ is a prime in the theorem above was treated in \cite{Mv}. It is easy to generalize the claim into the case where 
$q$ is a prime power by noting that $\Z[\sqrt{-2}]$ is a unique factorization domain. 
In this paper, we do not need to care about the signs of  $a,b$.

Furthermore, we need the following formula on Gauss sums, the so-called 
{\it Davenport-Hasse lifting formula.} 
\begin{theorem}\label{thm:lift}
{\em (\cite[Theorem~11.5.2]{BEW97})}
Let $\chi'$ be a nontrivial multiplicative character of $\F_{q}$ and 
let $\chi$ be the lift of $\chi'$ to $\F_{q^{d}}$, i.e., $\chi(\alpha)=\chi'(\Norm_{q^{d}/q}(\alpha))$ for $\alpha\in \F_{q^{d}}$, where $d\geq 2$ is an integer. Then 
\[
G_{q^d}(\chi)=(-1)^{d-1}(G_{q}(\chi'))^d. 
\]
\end{theorem}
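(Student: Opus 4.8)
The plan is to prove this classical lifting identity by the zeta-function argument over the polynomial ring $\F_q[x]$. For a monic polynomial $f\in\F_q[x]$ of degree $n$ with roots $\alpha_1,\dots,\alpha_n$ in $\overline{\F_q}$, I would attach the weight
\[
\lambda(f)=\chi'\Big(\prod_{i=1}^n\alpha_i\Big)\,\psi_{\F_q}\Big(\sum_{i=1}^n\alpha_i\Big),
\]
with $\lambda(1)=1$ for the constant polynomial. Since the elementary symmetric functions of the roots are (up to sign) the coefficients of $f$, both the product and the sum of the roots lie in $\F_q$, so $\lambda$ is well defined and depends only on the constant and the subleading coefficients of $f$. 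Because the product of roots is multiplicative and the sum of roots is additive under multiplication of polynomials, one checks $\lambda(fg)=\lambda(f)\lambda(g)$, so $\lambda$ is completely multiplicative on monic polynomials. First I would form the generating series $Z(t)=\sum_{f\ \mathrm{monic}}\lambda(f)\,t^{\deg f}$ and, using unique factorization in $\F_q[x]$, rewrite it as the Euler product $Z(t)=\prod_{P}(1-\lambda(P)t^{\deg P})^{-1}$ taken over monic irreducibles $P$.

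Next I would evaluate $Z(t)$ directly by collecting terms of each degree. The degree-$0$ term is $1$, and the degree-$1$ term is $\sum_{a\in\F_q}\chi'(a)\psi_{\F_q}(a)=G_q(\chi')$ because $\chi'(0)=0$. For $n\ge 2$ the weight $\lambda(f)$ involves only the two outer coefficients of $f$, while the $n-2$ interior coefficients range freely over $\F_q$; summing first over the constant coefficient yields a factor $\sum_{a\in\F_q}\chi'((-1)^n a)=0$ by the nontriviality of $\chi'$. Hence every coefficient of $t^n$ with $n\ge2$ vanishes and $Z(t)=1+G_q(\chi')\,t$ as a polynomial.

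The heart of the argument is to recover the Gauss sum over $\F_{q^d}$ from the Euler product. Taking the formal logarithm and regrouping by the total degree $m=k\deg P$ gives
\[
\log Z(t)=\sum_{m\ge 1}\frac{t^m}{m}\sum_{d\mid m} d\sum_{\deg P=d}\lambda(P)^{m/d}.
\]
For an irreducible $P$ of degree $d\mid m$ with a root $\beta\in\F_{q^m}$, the roots of $P$ are the Galois conjugates of $\beta$, so $\prod\alpha_i=\Norm_{q^d/q}(\beta)$ and $\sum\alpha_i=\Tr_{q^d/q}(\beta)$, whence $\lambda(P)=\chi'(\Norm_{q^d/q}(\beta))\,\psi_{\F_q}(\Tr_{q^d/q}(\beta))$. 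Using transitivity of norm and trace together with $\beta^{q^d}=\beta$, I would establish the identities $\Norm_{q^m/q}(\beta)=\Norm_{q^d/q}(\beta)^{m/d}$ and $\Tr_{q^m/q}(\beta)=(m/d)\Tr_{q^d/q}(\beta)$, which give $\lambda(P)^{m/d}=\chi(\beta)\,\psi_{\F_{q^m}}(\beta)$, where $\chi=\chi'\circ\Norm_{q^m/q}$ is the lift and $\psi_{\F_{q^m}}=\psi_{\F_q}\circ\Tr_{q^m/q}$ is the canonical additive character of $\F_{q^m}$. Since each irreducible of degree $d$ contributes its $d$ roots, the value is Galois-invariant, and every element of $\F_{q^m}$ has degree dividing $m$, the inner double sum equals $\sum_{\beta\in\F_{q^m}}\chi(\beta)\psi_{\F_{q^m}}(\beta)=G_{q^m}(\chi)$ (the $\beta=0$ term drops out as $\chi(0)=0$). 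Thus $\log Z(t)=\sum_{m\ge1}\frac{t^m}{m}G_{q^m}(\chi_m)$, with $\chi_m$ the lift of $\chi'$ to $\F_{q^m}$.

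Finally I would compare this with the direct evaluation: from $Z(t)=1+G_q(\chi')t$ one has $\log Z(t)=\sum_{m\ge1}\frac{(-1)^{m-1}}{m}\bigl(G_q(\chi')\bigr)^m t^m$. Matching coefficients of $t^m$ yields $G_{q^m}(\chi_m)=(-1)^{m-1}\bigl(G_q(\chi')\bigr)^m$, and specializing $m=d$ gives the stated formula. I expect the main obstacle to be the bookkeeping in the third step: converting the logarithm of the Euler product into a sum over field elements via the norm/trace transitivity identities, and verifying Galois-invariance of the weights so that each degree-$d$ prime contributes $\chi(\beta)\psi_{\F_{q^m}}(\beta)$ exactly once for each of its $d$ roots. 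The nontriviality of $\chi'$ is used precisely once, in the vanishing of the higher-degree coefficients of $Z(t)$, and it is essential there.
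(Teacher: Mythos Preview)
The paper does not prove this statement at all; it is quoted as the Davenport--Hasse lifting formula with a bare citation to \cite[Theorem~11.5.2]{BEW97}, and is then used as a black box (e.g.\ in the proof of Lemma~\ref{lem:Gaussconst}). So there is no ``paper's own proof'' to compare against.

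Your argument is the classical Davenport--Hasse proof via the $L$-function over $\F_q[x]$, and it is correct. The steps are all sound: the weight $\lambda$ is well defined and completely multiplicative; the direct evaluation $Z(t)=1+G_q(\chi')t$ follows because for $\deg f\ge 2$ the sum over the constant coefficient already vanishes by nontriviality of $\chi'$; the logarithm of the Euler product rearranges to $\sum_{m\ge1}\frac{t^m}{m}G_{q^m}(\chi_m)$ via the transitivity identities $\Norm_{q^m/q}(\beta)=\Norm_{q^d/q}(\beta)^{m/d}$ and $\Tr_{q^m/q}(\beta)=\frac{m}{d}\Tr_{q^d/q}(\beta)$ for $\beta\in\F_{q^d}$ with $d\mid m$; and matching coefficients against $\log(1+G_q(\chi')t)$ gives the formula. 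This is essentially the proof one finds in the cited reference, so your write-up is in line with the intended source.
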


Next, we define {\it Jacobi sums}. 
For multiplicative characters $\chi_1$  and $\chi_2$  of $\F_q$, define 
\[
J_q(\chi_1,\chi_2)=\sum_{x\in \F_q}\chi_1(x)\chi_2(1-x) \in \Z[\zeta_{q-1}].  
\]
Here, we 
extend the domain of multiplicative characters $\chi$ of $\F_q$ to all elements of $\F_q$ by setting 
$\chi(0)=1$ or $\chi(0)=0$ depending on whether $\chi$ is trivial or not. 
There is the following relationship between Jacobi sums and Gauss sums: 
\begin{equation}\label{eq:Gaussrelation}
J_q(\chi_1,\chi_2)=\frac{G_q(\chi_1)G_q(\chi_2)}{G_q(\chi_1\chi_2)}, 
\end{equation}
where  $\chi_1,\chi_2,\chi_1\chi_2$ are nontrivial.  
In this paper, we will use the following formula on Jacobi sums.  
\begin{theorem}\label{lem:facto}{\em (\cite{St})} 
Let $q=p^f\equiv 1\,(\mod{4})$ be a prime power with $p$ a prime. 
Let $\eta$ be the quadratic character of $\F_q$ and $\chi$ a multiplicative  character of order $4$ of $\F_q$. 
 Then,  $J_q(\eta,\chi)\in \Z[\zeta_4]$. In particular, if $p\equiv 1\,(\mod{4})$ and $q=a^2+b^2$ is a proper representation of $q$ with $a$ an odd integer and $p\not | a+b\zeta_4$. Then, 
\begin{equation}\label{eq:Gaussquad1}
J_q(\eta,\chi)=a+b\zeta_4, 
\end{equation}
where the signs of $a,b$ are 
ambiguously determined. 
If $p\equiv 3\,(\mod{4})$, $f$ is even and  $J_q(\eta,\chi)=-(-q)^{f/2}$.  
\end{theorem}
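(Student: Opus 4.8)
The plan is to reduce the whole statement to the classical evaluation of the diagonal quartic Jacobi sum $J_q(\chi,\chi)$. The first step is the identity
\[
J_q(\eta,\chi)=\chi(-1)\,J_q(\chi,\chi),
\]
valid whenever $q\equiv 1\pmod 4$. To prove it, write $\eta=\chi^{2}$ and apply \eqref{eq:Gaussrelation} to get $J_q(\eta,\chi)=G_q(\eta)G_q(\chi)/G_q(\chi^{3})$; multiplying numerator and denominator by $G_q(\chi)$, using $G_q(\chi)G_q(\chi^{3})=G_q(\chi)G_q(\chi^{-1})=\chi(-1)q$ together with $G_q(\chi)^{2}=J_q(\chi,\chi)\,G_q(\chi^{2})=J_q(\chi,\chi)\,G_q(\eta)$, one obtains $J_q(\eta,\chi)=G_q(\eta)^{2}J_q(\chi,\chi)/(\chi(-1)q)$, and finally $G_q(\eta)^{2}=\eta(-1)q=q$ since $\eta(-1)=(-1)^{(q-1)/2}=1$. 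As $J_q(\chi,\chi)=\sum_{x}\chi(x)\chi(1-x)\in\Z[\zeta_4]$ and $\chi(-1)=\pm1$, this already gives $J_q(\eta,\chi)\in\Z[\zeta_4]$, and reduces everything to evaluating $J_q(\chi,\chi)$.

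The second step extracts the arithmetic data that nearly determine $J_q(\chi,\chi)$. From \eqref{eq:Gaussrelation} and $|G_q(\cdot)|^{2}=q$ one gets $J_q(\chi,\chi)\overline{J_q(\chi,\chi)}=q$, so $J_q(\chi,\chi)=a+b\zeta_4$ with $a^{2}+b^{2}=q$. Reducing the defining sum modulo $2$ in $\Z[\zeta_4]$ and using that $\chi(x)\equiv 1$ or $\zeta_4\pmod 2$ according as $x$ is or is not a square of $\F_q$, one finds $a\equiv N_{+}$ and $b\equiv N_{-}\pmod 2$, where $N_{\pm}$ is the number of $x\notin\{0,1\}$ with $\eta(x)\eta(1-x)=\pm1$. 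Since $N_{+}-N_{-}=J_q(\eta,\eta)=-\eta(-1)=-1$ and $N_{+}+N_{-}=q-2$, one gets $N_{+}=(q-3)/2$ odd and $N_{-}=(q-1)/2$ even; hence $a$ is odd and $b$ is even.

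The third step is the only place where the field $\F_q$, rather than a prime field, genuinely enters, and it is the main obstacle: one must show that $J_q(\chi,\chi)$ is coprime to $p$ (equivalently $p\nmid a+b\zeta_4$) and then identify it. I would argue by descent through the Davenport--Hasse formula (Theorem~\ref{thm:lift}). If $p\equiv 1\pmod 4$, then $\F_q$ has exactly two quartic characters and both are lifts of quartic characters of $\F_p$; after replacing $\chi$ by $\chi^{-1}$ if necessary (which merely conjugates $J_q(\chi,\chi)$ and so is harmless for a statement valid only up to signs), Theorem~\ref{thm:lift} applied to $G_q(\chi)$ and to $G_q(\eta)$ gives $J_q(\chi,\chi)=(-1)^{f-1}J_p(\chi_p,\chi_p)^{f}$ for a quartic character $\chi_p$ of $\F_p$. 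As $J_p(\chi_p,\chi_p)$ has norm $p$, it is a prime element of $\Z[\zeta_4]$, so its $f$th power has norm $q$ and is coprime to $p$; by unique factorization in $\Z[\zeta_4]$ this power is, up to a unit, the element $a+b\zeta_4$ attached to any proper representation $q=a^{2}+b^{2}$ with $a$ odd and $p\nmid a+b\zeta_4$, and the second step forces that unit into $\{\pm1\}$. If instead $p\equiv 3\pmod 4$, then $q\equiv 1\pmod 4$ forces $f$ even, $p$ is inert in $\Z[\zeta_4]$, and the same reasoning with base field $\F_{p^{2}}$ gives $J_q(\chi,\chi)=(-1)^{f/2-1}J_{p^{2}}(\chi_0,\chi_0)^{f/2}$ for a quartic character $\chi_0$ of $\F_{p^{2}}$; here $J_{p^{2}}(\chi_0,\chi_0)$ has norm $p^{2}$, hence equals a unit times $p$, and the second step pins it down to $p$, giving the asserted closed form. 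Multiplying back by $\chi(-1)$ (which is $1$ in the inert case) produces $J_q(\eta,\chi)$. Thus the two delicate points are the passage to a lifted character and the mod-$2$ congruence of the second step; all the remaining sign and unit choices are exactly the ambiguity the statement leaves open.
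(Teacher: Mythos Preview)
The paper does not prove this theorem; it is quoted from Storer~\cite{St} as a known result, so there is no argument in the paper to compare your approach against. Your steps~1 and~2 are correct, and in the split case $p\equiv 1\pmod 4$ your Davenport--Hasse descent to $\F_p$ together with step~2 does determine $J_q(\chi,\chi)$ up to a unit in $\{\pm1\}$, which is exactly the stated ambiguity.

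There is, however, a real gap in the inert case $p\equiv 3\pmod 4$. You assert that ``the second step pins it down to $p$'', but it does not: both $+p$ and $-p$ have odd real part and zero (hence even) imaginary part, so the parity argument of step~2 leaves $J_{p^2}(\chi_0,\chi_0)=\pm p$ undetermined, and the wrong sign would propagate through the descent whenever $f/2$ is odd. A clean fix bypasses step~2 here: since $p\equiv -1\pmod 4$ one has $\chi_0^{\,p}=\chi_0^{-1}$, whence $G_{p^2}(\chi_0)=G_{p^2}(\chi_0^{\,p})=G_{p^2}(\chi_0^{-1})=\chi_0(-1)\overline{G_{p^2}(\chi_0)}=\overline{G_{p^2}(\chi_0)}$ (using $\chi_0(-1)=1$, as $p^2\equiv 1\pmod 8$), so $G_{p^2}(\chi_0)\in\R$ and $G_{p^2}(\chi_0)^2=p^2$. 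Combined with $G_{p^2}(\eta_0)=-G_p(\eta_p)^2=-(-p)=p$ from Theorem~\ref{thm:lift}, this gives $J_{p^2}(\chi_0,\chi_0)=p^2/p=p$ with no sign ambiguity, and your descent then yields $J_q(\eta,\chi)=(-1)^{f/2-1}p^{f/2}=-(-p)^{f/2}$. (Note that the displayed formula $-(-q)^{f/2}$ in the statement is evidently a misprint for $-(-p)^{f/2}$, since the Jacobi sum has absolute value $\sqrt q$.)
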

In this paper, we do not need to care about the signs of $a,b$ in the theorem above. 

\section{Basic construction of $t$-intersection sets}\label{sec:comp}
In this paper, we treat $t$-intersection sets  for  block 
designs obtained from quadratic residues of $\F_q$. 
Let $q$ be an odd prime power and $e$ be a positive integer dividing $q^2-1$  
such that $e/\gcd{(e,q+1)}=2$. Then, the restriction of a  multiplicative character of order $e$ of $\F_{q^2}$ to $\F_q$ is of order $2$. 
Let $H$ be an $e/2$-subset of $\{0,1,\ldots,e-1\}$ such that $H\equiv \{0,1,\ldots,e/2-1\}\,(\mod{e/2})$. 
For a fixed positive integer $\ell$ not divisible by $q+1$, define
\begin{equation}\label{eq:defDDD}
D_{\ell,H}=\{x\in \F_q\,:\,1+x \omega^\ell\in \bigcup_{i \in H}C_i^{(e,q^2)}\}, 
\end{equation}
where $\omega$ is a fixed primitive element of $\F_{q^2}$ such that 
$\omega^i\langle \omega^e\rangle =C_i^{(e,q^2)}$.  We will use the set 
$D_{\ell,H}$ to construct $t$-intersection sets. 
In this section, we are interested in the sizes of $D_{\ell,H}$ and $D_{\ell,H}\cap (C_0^{(2,q)}+s)$, $s\in \F_q$. 

We will use the following lemmas.  
\begin{lemma}\label{lem:charac_ge}
With notations as above, 
let $\chi_e$ be a multiplicative
character of order $e$ of $\F_{q^2}$ and $\eta$ be 
the quadratic character of $\F_{q}$. 
Then, 
\[
\sum_{x\in \F_q}\chi_e(1+\omega^\ell x)=\frac{\chi_e(-1)G_{q^2}(\chi_e)G_q(\eta)}{q}\chi_e(\omega^{-q\ell})\chi_e(\omega^{\ell q}-\omega^{\ell}).  
\]
\end{lemma}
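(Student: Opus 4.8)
The plan is to expand the multiplicative character $\chi_e$ via the orthogonality relation \eqref{eq:oth_g} applied over $\F_{q^2}$, and then collapse the resulting inner sum over $\F_q$ into a quadratic Gauss sum over $\F_q$. Concretely, since $1+\omega^\ell x$ ranges over nonzero elements of $\F_{q^2}$ as $x$ ranges over $\F_q\setminus\{-\omega^{-\ell}\}$ (note $-\omega^{-\ell}\notin\F_q$ because $\ell$ is not divisible by $q+1$, so in fact $x$ runs over all of $\F_q$ with $1+\omega^\ell x\neq 0$ throughout), I would write
\[
\chi_e(1+\omega^\ell x)=\frac{\chi_e(-1)G_{q^2}(\chi_e)}{q^2}\sum_{a\in\F_{q^2}^\ast}\chi_e^{-1}(a)\,\psi_{\F_{q^2}}\!\bigl(a(1+\omega^\ell x)\bigr),
\]
using \eqref{eq:oth_g} with the field $\F_{q^2}$ in place of $\F_q$. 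Summing over $x\in\F_q$ and interchanging the order of summation gives
\[
\sum_{x\in\F_q}\chi_e(1+\omega^\ell x)=\frac{\chi_e(-1)G_{q^2}(\chi_e)}{q^2}\sum_{a\in\F_{q^2}^\ast}\chi_e^{-1}(a)\,\psi_{\F_{q^2}}(a)\sum_{x\in\F_q}\psi_{\F_{q^2}}(a\omega^\ell x).
\]

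The next step is to evaluate the inner sum over $x\in\F_q$. Writing $\psi_{\F_{q^2}}(a\omega^\ell x)=\zeta_p^{\Tr_{q^2/p}(a\omega^\ell x)}$ and using transitivity of the trace, $\Tr_{q^2/p}=\Tr_{q/p}\circ\Tr_{q^2/q}$, together with $x\in\F_q$, one gets $\Tr_{q^2/p}(a\omega^\ell x)=\Tr_{q/p}\bigl(x\,\Tr_{q^2/q}(a\omega^\ell)\bigr)$, so that $\sum_{x\in\F_q}\psi_{\F_{q^2}}(a\omega^\ell x)=q$ if $\Tr_{q^2/q}(a\omega^\ell)=0$ and $0$ otherwise. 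Now $\Tr_{q^2/q}(a\omega^\ell)=a\omega^\ell+(a\omega^\ell)^q=0$ forces $a^{q-1}=-\omega^{\ell-q\ell}$, which singles out a coset of $\F_q^\ast$ in $\F_{q^2}^\ast$; parametrizing that coset by $a=t\,a_0$ with $t\in\F_q^\ast$ and $a_0$ a fixed solution, the surviving sum becomes
\[
\frac{\chi_e(-1)G_{q^2}(\chi_e)}{q}\sum_{t\in\F_q^\ast}\chi_e^{-1}(ta_0)\,\psi_{\F_{q^2}}(ta_0)
=\frac{\chi_e(-1)G_{q^2}(\chi_e)}{q}\,\chi_e^{-1}(a_0)\sum_{t\in\F_q^\ast}\chi_e^{-1}(t)\,\psi_{\F_q}\bigl(t\,\Tr_{q^2/q}(a_0)\bigr).
\]
Since the restriction of $\chi_e$ to $\F_q^\ast$ has order $2$ by the hypothesis $e/\gcd(e,q+1)=2$, i.e.\ it equals $\eta$, and $\Tr_{q^2/q}(a_0)=a_0(1-\omega^{(q-1)\ell})\in\F_q^\ast$ is a fixed nonzero element, the remaining $t$-sum is a twisted Gauss sum over $\F_q$ equal to $\eta\bigl(\Tr_{q^2/q}(a_0)\bigr)^{-1}G_q(\eta)=\eta\bigl(\Tr_{q^2/q}(a_0)\bigr)G_q(\eta)$.

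The final bookkeeping step is to identify $a_0$ explicitly. From $a_0^{q-1}=-\omega^{\ell-q\ell}$ one may take $a_0=\omega^{-q\ell}\cdot c$ for a suitable $c$; a cleaner route is to verify directly that $a_0:=\omega^{-q\ell}$ satisfies $\Tr_{q^2/q}(a_0\omega^\ell)=\omega^{-q\ell+\ell}+\omega^{-q^2\ell+q\ell}=\omega^{\ell-q\ell}+\omega^{q\ell-\ell}$, which is generally not zero — so instead one solves $a_0\omega^\ell+a_0^q\omega^{q\ell}=0$ to get $a_0^{q-1}=-\omega^{(1-q)\ell}$ and picks the representative making the formula match; then $\chi_e^{-1}(a_0)=\chi_e(\omega^{-q\ell})$ up to the ambiguity absorbed into the Gauss sum, and $\Tr_{q^2/q}(a_0)$ is a scalar multiple of $\omega^{\ell q}-\omega^{\ell}$, so $\eta(\Tr_{q^2/q}(a_0))\,G_q(\eta)$ combines with $\chi_e^{-1}(a_0)$ to produce exactly the factor $\chi_e(\omega^{-q\ell})\,\chi_e(\omega^{\ell q}-\omega^{\ell})$ claimed (using that on $\F_q^\ast$ we have $\chi_e=\eta$, so $\eta(u)=\chi_e(u)$ for $u\in\F_q^\ast$). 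Substituting back yields the stated identity.

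I expect the main obstacle to be the precise tracking of the coset representative $a_0$ and of all the characters and roots of unity through the two trace computations, so that the three separate factors — $\chi_e(\omega^{-q\ell})$ from $\chi_e^{-1}(a_0)$, the Gauss sum $G_q(\eta)$, and the factor $\chi_e(\omega^{\ell q}-\omega^{\ell})$ coming from $\eta\bigl(\Tr_{q^2/q}(a_0)\bigr)$ after using $\chi_e|_{\F_q^\ast}=\eta$ — assemble in exactly the advertised form without a stray sign or unit. Everything else (the two applications of orthogonality, the trace transitivity, and collapsing the inner character sum) is routine.
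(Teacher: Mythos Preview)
Your approach matches the paper's exactly: expand $\chi_e$ via \eqref{eq:oth_g}, collapse the $x$-sum using $\Tr_{q^2/q}$, parametrize the surviving coset of $\F_q^\ast$, and recognize the remaining sum as $G_q(\eta)$ since $\chi_e|_{\F_q^\ast}=\eta$. The bookkeeping you flag as the obstacle is resolved in the paper by taking the explicit representative $a_0=\omega^{-\ell+(q+1)/2}$ (which lies in the kernel because $(\omega^{(q+1)/2})^{q-1}=-1$), for which one computes directly $\Tr_{q^2/q}(a_0)=a_0\cdot\omega^{-q\ell}(\omega^{\ell q}-\omega^\ell)$, so that $\chi_e^{-1}(a_0)\,\eta\bigl(\Tr_{q^2/q}(a_0)\bigr)=\chi_e\bigl(\Tr_{q^2/q}(a_0)/a_0\bigr)=\chi_e(\omega^{-q\ell})\chi_e(\omega^{\ell q}-\omega^\ell)$ with no stray sign or ambiguity; your phrasing ``up to the ambiguity absorbed into the Gauss sum'' is not needed, since $\Tr_{q^2/q}(a_0)/a_0=1+a_0^{q-1}$ is already independent of the choice of $a_0$.
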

\begin{proof}
By \eqref{eq:oth_g}, we have 
\begin{equation}\label{eq:Ga81}
\sum_{x\in \F_q}\chi_e(1+\omega^\ell x)=
\frac{\chi_e(-1)G_{q^2}(\chi_e)}{q^2}\sum_{a\in \F_{q^2}^\ast}\chi_e^{-1}(a)\psi_{\F_{q^2}}(a)\sum_{x\in \F_q}\psi_{\F_{q^2}}(a \omega^\ell x). 
\end{equation}
Furthermore, 
\[
\sum_{x\in \F_q}\psi_{\F_{q^2}}(a \omega^\ell x)=
\sum_{x\in \F_q}\psi_{\F_q}(x\Tr_{q^2/q}(a \omega^\ell))=
\begin{cases}
0,& \text{ if } \Tr_{q^2/q}(a\omega^\ell)\not=0,\\
q,& \text{ if } \Tr_{q^2/q}(a\omega^\ell)=0,  
\end{cases}
\]
where $\Tr_{q^2/q}$ is the relative trace from $\F_{q^2}$ to $\F_q$, i.e., 
$\Tr_{q^2/q}(x):=x+x^q$ for $x\in \F_{q^2}$. 
It is clear that $\Tr_{q^2/q}(a\omega^\ell)=0$ if and only if $a$ has the form 
$a=y\omega^{-\ell+\frac{q+1}{2}}$ with $y\in \F_q$. Hence, continuing from 
\eqref{eq:Ga81}, we have 
\begin{align}
\sum_{x\in \F_q}\chi_e(1+\omega^\ell x)=&\, 
\frac{\chi_e(-1)G_{q^2}(\chi_e)}{q}\sum_{y\in \F_q^\ast}\chi_e^{-1}(y\omega^{-\ell+\frac{q+1}{2}})\psi_{\F_{q^2}}(y\omega^{-\ell+\frac{q+1}{2}})\nonumber\\
=&\, \frac{\chi_e(-1)G_{q^2}(\chi_e)}{q}\sum_{y\in \F_q^\ast}\chi_e^{-1}(y)\chi_e^{-1}(\omega^{-\ell+\frac{q+1}{2}})\psi_{\F_{q}}(y\Tr_{q^2/q}(\omega^{-\ell+\frac{q+1}{2}})). \label{eq:Ga82ge}
\end{align}
Since $\Tr_{q^2/q}(\omega^{-\ell+\frac{q+1}{2}})=\omega^{-\ell+\frac{q+1}{2}}\omega^{-\ell q}(\omega^{\ell q}-\omega^{\ell})$ is a nonzero element in $\F_q$, continuing from
\eqref{eq:Ga82ge}, we have 
\begin{align*}
&\, \sum_{x\in \F_q}\chi_e(1+\omega^\ell x)\\
=&\,
\frac{\chi_e(-1)G_{q^2}(\chi_e)}{q}\sum_{y\in \F_q^\ast}\chi_e^{-1}(y\Tr_{q^2/q}(\omega^{-\ell+\frac{q+1}{2}}))\psi_{\F_q}(y\Tr_{q^2/q}(\omega^{-\ell+\frac{q+1}{2}}))\chi_e(\omega^{-q\ell})
\chi_e(\omega^{\ell q}-\omega^\ell)\\
=&\,
\frac{\chi_e(-1)G_{q^2}(\chi_e)G_q(\eta)}{q}\chi_e(\omega^{-q\ell})
\chi_e(\omega^{\ell q}-\omega^\ell). 
\end{align*}
This completes the proof of the lemma. \qed \end{proof}

\begin{lemma}\label{lem:charac28}
With notations as in Lemma~\ref{lem:charac_ge}, it holds that 
for any $s\in \F_q$ 
\[
\sum_{x\in \F_q\setminus \{s\}}\chi_{e}(1+\omega^\ell x)\eta(x-s)=\frac{G_{q^2}(\chi_e)G_q(\eta)}{q}\chi_e^{-1}(1+\omega^{q\ell}s)
\chi_e(\omega^{\ell q}-\omega^\ell)-\chi_e(\omega^\ell). 
\]
\end{lemma}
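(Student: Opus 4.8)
The plan is to reduce the sum to a single application of Lemma~\ref{lem:charac_ge} by means of two elementary changes of variable, the crucial input being that the restriction of $\chi_e$ to $\F_q^\ast$ is the quadratic character $\eta$ (this is exactly the condition $e/\gcd(e,q+1)=2$ that is in force).

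First I would shift the variable by $x=s+t$, turning the sum into $\sum_{t\in\F_q^\ast}\chi_e(1+\omega^\ell s+\omega^\ell t)\,\eta(t)$; the excluded point $x=s$ is $t=0$, which contributes nothing since $\eta(0)=0$. Then I would apply the inversion $t\mapsto t^{-1}$, a bijection of $\F_q^\ast$, and use $\eta(t^{-1})=\eta(t)$. After clearing the denominator the summand becomes $\chi_e(\mu t+\omega^\ell)\,\chi_e^{-1}(t)\,\eta(t)$, where $\mu:=1+\omega^\ell s$. Here the key observation is used: for $t\in\F_q^\ast$ one has $\chi_e(t)=\eta(t)$, so $\chi_e^{-1}(t)\eta(t)=\eta(t)^2=1$, and the whole sum collapses to $\sum_{t\in\F_q^\ast}\chi_e(\mu t+\omega^\ell)$.

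Next I would record the non-vanishing facts needed to proceed: $\mu\neq0$ and $\nu:=\mu^{-1}\omega^\ell\notin\F_q$; both follow from $q+1\nmid\ell$ (indeed $\nu^{-1}=\omega^{-\ell}+s$, which cannot lie in $\F_q$). Factoring $\chi_e(\mu t+\omega^\ell)=\chi_e(\mu\nu)\chi_e(1+\nu^{-1}t)$, using $\mu\nu=\omega^\ell$, and splitting off the $t=0$ term gives
\[
\sum_{x\in\F_q\setminus\{s\}}\chi_e(1+\omega^\ell x)\,\eta(x-s)=\chi_e(\omega^\ell)\sum_{t\in\F_q}\chi_e(1+\nu^{-1}t)-\chi_e(\omega^\ell).
\]
Since $\nu^{-1}\notin\F_q$, writing $\nu^{-1}$ as a power of $\omega$ with exponent not divisible by $q+1$ makes Lemma~\ref{lem:charac_ge} applicable to the remaining sum over $\F_q$. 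The resulting quantities simplify cleanly because $\nu^{-1}=\omega^{-\ell}+s$ yields $(\nu^{-1})^q-\nu^{-1}=\omega^{-\ell q}-\omega^{-\ell}$, with the dependence on $s$ dropping out, while $\chi_e\big((\nu^{-1})^{-q}\big)=\chi_e\big((\omega^{-\ell q}+s)^{-1}\big)=\chi_e(\omega^{\ell q})\,\chi_e^{-1}(1+\omega^{q\ell}s)$. Substituting the value from Lemma~\ref{lem:charac_ge}, using $\omega^{-\ell q}-\omega^{-\ell}=-\omega^{-\ell(q+1)}(\omega^{\ell q}-\omega^{\ell})$, and cancelling the powers of $\omega^\ell$ and the factors $\chi_e(-1)$ produces exactly the asserted identity.

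There is no genuine obstacle once the inversion trick is found; the remaining work is purely character bookkeeping. The only points requiring attention are the non-vanishing conditions ($\mu\neq0$, $\nu\notin\F_q$) that legitimise the substitutions and the invocation of Lemma~\ref{lem:charac_ge}, and the careful tracking of the $\chi_e(-1)$ factors together with the factorisation $\omega^{-\ell q}-\omega^{-\ell}=-\omega^{-\ell(q+1)}(\omega^{\ell q}-\omega^{\ell})$ so that the output is in the stated normalisation. The extra summand $-\chi_e(\omega^\ell)$, which does not appear in Lemma~\ref{lem:charac_ge}, is precisely the price of extending the inner sum from $\F_q^\ast$ to $\F_q$.
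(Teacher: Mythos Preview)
Your argument is correct and follows the paper's proof almost verbatim: both shift by $s$, exploit $\chi_e|_{\F_q^\ast}=\eta$ to absorb the $\eta$-factor, invert the variable, and arrive at $\sum_{y\in\F_q}\chi_e\big(y(1+\omega^\ell s)+\omega^\ell\big)-\chi_e(\omega^\ell)$. The only difference is cosmetic: the paper then evaluates this last sum by repeating the Gauss-sum computation from the proof of Lemma~\ref{lem:charac_ge}, whereas you make the further normalisation $\mu t+\omega^\ell=\omega^\ell(1+\nu^{-1}t)$ so that Lemma~\ref{lem:charac_ge} can be quoted directly---a slightly more economical route to the same endpoint.
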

\begin{proof} 
Since ${\chi_e}_{|\F_q}=\eta$,  we have 
\begin{align}
\sum_{x\in \F_q\setminus \{s\}}\chi_e(1+\omega^\ell x)\eta(x-s)=&\, \sum_{y\in \F_q^\ast}\chi_e(1+\omega^\ell (y+s))\chi_e^{-1}(y)\nonumber\\
=&\, \sum_{y\in \F_q^\ast}\chi_e(y^{-1}(1+\omega^\ell s)+\omega^\ell)\nonumber\\
=&\, \sum_{y\in \F_q}\chi_{e}(y(1+\omega^\ell s)+\omega^\ell)-\chi_e(\omega^\ell). \label{eq:chara28}
\end{align}
Similarly to Lemma~\ref{lem:charac_ge}, we have 
\begin{equation}\label{eq:Ga83}
\sum_{y\in \F_q}\chi_e(y(1+\omega^\ell s)+\omega^\ell)=
\frac{\chi_e(-1)G_{q^2}(\chi_e)}{q^2}\sum_{a\in \F_{q^2}^\ast}\chi_e^{-1}(a)\psi_{\F_{q^2}}(a\omega^\ell)\sum_{y\in \F_q}\psi_{\F_{q^2}}(a y(1+\omega^\ell s)). 
\end{equation}
Here, 
\[
\sum_{y\in \F_q}\psi_{\F_{q^2}}(a y(1+\omega^\ell s))=
\sum_{y\in \F_q}\psi_{\F_q}(y\Tr_{q^2/q}(a (1+\omega^\ell s)))=
\begin{cases}
0,& \text{ if } \Tr_{q^2/q}(a (1+\omega^\ell s))\not=0,\\
q,& \text{ if } \Tr_{q^2/q}(a (1+\omega^\ell s))=0. 
\end{cases}
\]
It is clear that $\Tr_{q^2/q}(a (1+\omega^\ell s))=0$ if and only if $a$ has the form 
$a=y (1+\omega^\ell s)^{-1}\omega^\frac{q+1}{2}$ with $y\in \F_q$. 
Continuing from \eqref{eq:Ga83}, we have 
\begin{align}
&\, \sum_{y\in \F_q}\chi_e(y(1+\omega^\ell s)+\omega^\ell)\nonumber\\
=&\, 
\frac{\chi_e(-1)G_{q^2}(\chi_e)}{q}\sum_{y\in \F_q^\ast}\chi_e^{-1}(y (1+\omega^\ell s)^{-1}\omega^\frac{q+1}{2}))\psi_{\F_{q^2}}(y (1+\omega^\ell s)^{-1}\omega^{\ell+\frac{q+1}{2}})\nonumber\\
=&\, \frac{\chi_e(-1)G_{q^2}(\chi_e)}{q}\sum_{y\in \F_q^\ast}\chi_e^{-1}(y (1+\omega^\ell s)^{-1}\omega^\frac{q+1}{2})\psi_{\F_q}(y\Tr_{q^2/q}( (1+\omega^\ell s)^{-1}\omega^{\ell+\frac{q+1}{2}})). \label{eq:Ga82}
\end{align}
Since $\Tr_{q^2/q}((1+\omega^\ell s)^{-1}\omega^{\ell+\frac{q+1}{2}})=-(1+\omega^\ell s)^{-1}\omega^\frac{q+1}{2}(\omega^{\ell q}-\omega^\ell)(1+\omega^\ell s)^{-q}$ is a nonzero element in $\F_q$, continuing from
\eqref{eq:Ga82}, we have 
\begin{align*}
\sum_{y\in \F_q}\chi_e(y(1+\omega^\ell s)+\omega^\ell)=&\,
\frac{\chi_e(-1)G_{q^2}(\chi_e)}{q}\sum_{y\in \F_q^\ast}\chi_e^{-1}(y)\psi_{\F_q}(y)\chi_e(-1)\chi_e^{-1}(1+\omega^{q\ell}s)
\chi_e(\omega^{\ell q}-\omega^\ell)\\
=&\,
\frac{G_{q^2}(\chi_e)G_q(\eta)}{q}\chi_e^{-1}(1+\omega^{q\ell}s)
\chi_e(\omega^{\ell q}-\omega^\ell). 
\end{align*}
This completes the proof of the lemma. 
\qed \end{proof}
\vspace{0.3cm}

We now evaluate the size of $D_{\ell,H}$.  
\begin{proposition}\label{prop:sizeHH}
The sizes of $D_{\ell,H}$  defined in \eqref{eq:defDDD} is given by 
\begin{equation}\label{eq:sizeH}
|D_{\ell,H}|=\frac{q}{2}+\frac{\chi_e(-1)G_q(\eta)}{eq}\sum_{i=0}^{e/2-1} A_{2i+1} G_{q^2}(\chi_e^{2i+1})\chi_e^{-(2i+1)}(\omega^{q\ell})\chi_e^{2i+1}(\omega^{\ell q}-\omega^{\ell}), 
\end{equation}
where $A_i=\sum_{j\in H}\zeta_e^{-ji}$. 
\end{proposition}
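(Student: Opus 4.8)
The plan is to expand the indicator function of $S:=\bigcup_{i\in H}C_i^{(e,q^2)}$ into multiplicative characters of $\F_{q^2}$ and then apply Lemma~\ref{lem:charac_ge} to each of the resulting character sums. Fix the multiplicative character $\chi_e$ of $\F_{q^2}$ of order $e$ normalized by $\chi_e(\omega)=\zeta_e$. Writing an element $y\in\F_{q^2}^\ast$ as $y=\omega^a$, one has $y\in S$ if and only if $a\bmod e\in H$, and expanding the indicator of the latter event by orthogonality of the characters of $\Z/e\Z$ gives
\[
\mathbf{1}_S(y)=\frac1e\sum_{j=0}^{e-1}A_j\,\chi_e^{j}(y),\qquad A_j:=\sum_{i\in H}\zeta_e^{-ij},
\]
valid for all $y\in\F_{q^2}^\ast$. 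Since $\ell$ is not divisible by $q+1$, the element $\omega^{-\ell}$ does not lie in $\F_q$, so $1+x\omega^\ell\neq 0$ for every $x\in\F_q$; summing the displayed identity over $y=1+x\omega^\ell$ therefore yields
\[
|D_{\ell,H}|=\sum_{x\in\F_q}\mathbf{1}_S(1+x\omega^\ell)=\frac1e\sum_{j=0}^{e-1}A_j\sum_{x\in\F_q}\chi_e^{j}(1+x\omega^\ell).
\]

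Next I would isolate the three types of terms. For $j=0$: $A_0=|H|=e/2$ and the inner sum equals $q$ (the summand being identically $1$, as $1+x\omega^\ell\neq0$), contributing $\frac1e\cdot\frac e2\cdot q=\frac q2$, the leading term of \eqref{eq:sizeH}. For $j$ even with $0<j<e$, write $j=2j'$ with $1\le j'\le e/2-1$; since $H$ is a complete system of representatives modulo $e/2$ (being an $(e/2)$-subset of $\{0,\dots,e-1\}$ that exhausts the residues mod $e/2$), we get
\[
A_j=\sum_{i\in H}\zeta_e^{-2ij'}=\sum_{i\in H}\zeta_{e/2}^{-ij'}=\sum_{r=0}^{e/2-1}\zeta_{e/2}^{-rj'}=0.
\]
Hence only the odd exponents $j=2i+1$, $0\le i\le e/2-1$, survive, and
\[
|D_{\ell,H}|=\frac q2+\frac1e\sum_{i=0}^{e/2-1}A_{2i+1}\sum_{x\in\F_q}\chi_e^{2i+1}(1+x\omega^\ell).
\]

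Finally I would evaluate each remaining inner sum with Lemma~\ref{lem:charac_ge} applied to $\chi_e^{2i+1}$ in place of $\chi_e$. The point to check is that, although $\chi_e^{2i+1}$ need not have order exactly $e$, the proof of Lemma~\ref{lem:charac_ge} uses only that the character of $\F_{q^2}$ in question is nontrivial (true since $0<2i+1<e$) and restricts to the quadratic character $\eta$ on $\F_q$; here $(\chi_e^{2i+1})|_{\F_q}=\eta^{2i+1}=\eta$, so that formula applies verbatim. Using $\chi_e^{2i+1}(-1)=\chi_e(-1)^{2i+1}=\chi_e(-1)$ (as $\chi_e(-1)=\pm1$) together with $\chi_e^{2i+1}(\omega^{-q\ell})=\chi_e^{-(2i+1)}(\omega^{q\ell})$, this gives
\[
\sum_{x\in\F_q}\chi_e^{2i+1}(1+x\omega^\ell)=\frac{\chi_e(-1)G_{q^2}(\chi_e^{2i+1})G_q(\eta)}{q}\,\chi_e^{-(2i+1)}(\omega^{q\ell})\,\chi_e^{2i+1}(\omega^{\ell q}-\omega^\ell),
\]
and substituting this into the previous display produces exactly \eqref{eq:sizeH}. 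No genuine obstacle arises here: the only subtlety is the extension of Lemma~\ref{lem:charac_ge} to the non-primitive characters $\chi_e^{2i+1}$ just discussed, while everything else is routine bookkeeping with roots of unity and Gauss sums.
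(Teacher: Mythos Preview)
Your proof is correct and follows essentially the same route as the paper's own argument: expand the indicator of $\bigcup_{i\in H}C_i^{(e,q^2)}$ in multiplicative characters, observe that $A_j=0$ for nonzero even $j$ because $H$ is a complete residue system modulo $e/2$, and then apply Lemma~\ref{lem:charac_ge} to each surviving odd-power character. You are in fact slightly more careful than the paper, making explicit both that $1+x\omega^\ell\neq 0$ for all $x\in\F_q$ and that Lemma~\ref{lem:charac_ge} extends to the possibly non-primitive characters $\chi_e^{2i+1}$ because its proof only uses nontriviality and the restriction to $\F_q$ being $\eta$.
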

\proof
The characteristic function of 
 $\bigcup_{j \in H}C_{j}^{(e,q^2)}$ is  given as 
\begin{equation}\label{eq:deff}
f(x)=\frac{1}{e}\sum_{j\in H}\sum_{i=0}^{e-1}\zeta_e^{-ji}\chi_e^i(x), \quad  x\in \F_{q^2}^\ast. 
\end{equation}
The size of  $D_{\ell,H}$ is expressed as 
\[
|D_{\ell,H}|=\sum_{x\in \F_q}f(1+\omega^\ell x)=\frac{1}{e}\sum_{i=0}^{e-1} A_i\sum_{x\in \F_q}\chi_e^i(1+x\omega^\ell). 
\]
Since $H\equiv \{0,1,\ldots,e/2-1\}\,(\mod{e/2})$, we have $A_i=0$ if $i$ is even with $i\not=0$. Furthermore, by Lemma~\ref{lem:charac_ge}, it holds that 
\[
|D_{\ell,H}|=\frac{q}{2}+\frac{\chi_e(-1)G_q(\eta)}{eq}\sum_{i=0}^{e/2-1} A_{2i+1} G_{q^2}(\chi_e^{2i+1})\chi_e^{-(2i+1)}(\omega^{q\ell})\chi_e^{2i+1}(\omega^{\ell q}-\omega^{\ell}).  
\]
This completes the proof. 
\qed
\vspace{0.3cm}

We give another representation for the size of $D_{\ell,H}$ below. 
\begin{proposition}\label{prop:size_charac}
The sizes of $D_{\ell,H}$ defined in \eqref{eq:defDDD} is given by 
\[
|D_{\ell,H}|=\frac{q}{2}+\frac{\chi_e(-1)G_q(\eta)}{2q}+\frac{\chi_e(-1)G_q(\eta)}{q}\sum_{j\in H}\psi_{\F_{q^2}}(\omega^{q\ell}(\omega^{\ell q}-\omega^{\ell})^{-1}C_j^{(e,q^2)}).  
\]
\end{proposition}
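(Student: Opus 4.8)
The plan is to deduce Proposition~\ref{prop:size_charac} from Proposition~\ref{prop:sizeHH} by recognizing the Gauss-sum part of \eqref{eq:sizeH} as a sum of Gauss periods. First I would introduce the element $g:=\omega^{q\ell}(\omega^{\ell q}-\omega^{\ell})^{-1}$, which is a well-defined nonzero element of $\F_{q^2}$ because $q+1\nmid \ell$ forces $\omega^{\ell q}\neq\omega^{\ell}$ (this nonvanishing was already used in Lemma~\ref{lem:charac_ge}). Then the character factor in \eqref{eq:sizeH} collapses: $\chi_e^{-(2i+1)}(\omega^{q\ell})\chi_e^{2i+1}(\omega^{\ell q}-\omega^{\ell})=\chi_e^{-(2i+1)}(g)$, so the task reduces to rewriting
\[
S:=\sum_{i=0}^{e/2-1}A_{2i+1}\,G_{q^2}(\chi_e^{2i+1})\,\chi_e^{-(2i+1)}(g)
\]
in terms of the periods $\psi_{\F_{q^2}}(gC_j^{(e,q^2)})$, $j\in H$.

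Next I would expand a single shifted coset via Fourier inversion on $\F_{q^2}^\ast$. Using the characteristic-function expansion of $C_j^{(e,q^2)}$ of the form~\eqref{eq:deff} and the substitution $y\mapsto g^{-1}y$ in the resulting inner sums, one obtains
\[
\psi_{\F_{q^2}}(gC_j^{(e,q^2)})=\frac{1}{e}\Bigl(-1+\sum_{i=1}^{e-1}\zeta_e^{-ji}\,\chi_e^{-i}(g)\,G_{q^2}(\chi_e^i)\Bigr),
\]
the $i=0$ term contributing $-1$ since $G_{q^2}$ of the trivial character is $-1$. Summing this over $j\in H$, writing $A_i=\sum_{j\in H}\zeta_e^{-ji}$, and using that $A_0=|H|=e/2$ while $A_i=0$ for even $i\neq 0$ (exactly as noted in the proof of Proposition~\ref{prop:sizeHH}), all surviving terms have odd index and one gets
\[
\sum_{j\in H}\psi_{\F_{q^2}}(gC_j^{(e,q^2)})=\frac{1}{e}\Bigl(-\tfrac{e}{2}+S\Bigr),
\]
i.e. $S=\tfrac{e}{2}+e\sum_{j\in H}\psi_{\F_{q^2}}(gC_j^{(e,q^2)})$.

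Finally, substituting this expression for $S$ into \eqref{eq:sizeH} turns $\frac{\chi_e(-1)G_q(\eta)}{eq}\cdot S$ into $\frac{\chi_e(-1)G_q(\eta)}{2q}+\frac{\chi_e(-1)G_q(\eta)}{q}\sum_{j\in H}\psi_{\F_{q^2}}(gC_j^{(e,q^2)})$, which is precisely the claimed formula. There is no real obstacle here: the argument is a bookkeeping exercise in Fourier inversion, and the only points demanding care are keeping track of the trivial-character contribution $-1$ so that the new constant $\frac{\chi_e(-1)G_q(\eta)}{2q}$ emerges with the correct sign, and invoking $A_i=0$ for even $i\neq 0$ to restrict every sum to odd indices.
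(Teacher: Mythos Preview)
Your proposal is correct and follows essentially the same route as the paper: both start from \eqref{eq:sizeH}, introduce $g=\omega^{q\ell}(\omega^{\ell q}-\omega^{\ell})^{-1}$, use the identity $\psi_{\F_{q^2}}(gC_j^{(e,q^2)})=\frac{1}{e}\sum_{i=0}^{e-1}\zeta_e^{-ji}\chi_e^{-i}(g)G_{q^2}(\chi_e^i)$ (the paper just labels this ``orthogonality of characters''), and then invoke $A_i=0$ for nonzero even $i$ together with the $i=0$ contribution $-e/2$ to isolate the odd-index sum $S$. Your write-up simply makes the bookkeeping explicit where the paper is terse.
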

\proof
By the orthogonality of characters, we have
\[
\frac{1}{e}\sum_{i=0}^{e-1} A_i G_{q^2}(\chi_e^{i})\chi_e^{-i}(\omega^{q\ell})\chi_e^{i}(\omega^{\ell q}-\omega^{\ell})=\sum_{j\in H}\psi_{\F_{q^2}}(\omega^{q\ell}(\omega^{\ell q}-\omega^{\ell})^{-1}C_j^{(e,q^2)}). 
\]
Hence, continuing from \eqref{eq:sizeH}, we have 
\[
|D_{\ell,H}|=\frac{q}{2}+\frac{\chi_e(-1)G_q(\eta)}{2q}+\frac{\chi_e(-1)G_q(\eta)}{q}\sum_{j\in H}\psi_{\F_{q^2}}(\omega^{q\ell}(\omega^{\ell q}-\omega^{\ell})^{-1}C_j^{(e,q^2)}). 
\]
This completes the proof. 
\qed
\vspace{0.3cm}

It is remarkable that this proposition implies that the size of $D_{\ell,H}$ 
can be evaluated from one of the nontrivial character values of $\bigcup_{i\in H}C_i^{(e,q^2)}$ 
(equivalently, the eigenvalues of the graph $\Gamma$ on $\F_{q^2}$, where  $(x,y)\in E(\Gamma)$ if and only if $x-y\in \bigcup_{i\in H}C_i^{(e,q^2)})$, the so-called {\it Cayley graph}). 

We now evaluate the sizes of  $D_{\ell,H}\cap (C_0^{(2,q)}+s)$, $s\in \F_q$. 
\begin{proposition}\label{prop:size:int}
The size $N_s$ of $D_{\ell,H}\cap (C_0^{(2,q)}+s)$ is given by 
\begin{align}
N_s=&\, \frac{q-1}{4}-\frac{1}{2e}\sum_{i=0}^{e/2-1}
A_{2i+1}(\chi_e^{2i+1}(1+\omega^\ell s)+\chi_e^{2i+1}(\omega^\ell)) \nonumber\\
&\, \, +\frac{G_q(\eta)}{2eq}\sum_{i=0}^{e/2-1}A_{2i+1}
G_{q^2}(\chi_e^{2i+1})\chi_e^{2i+1}(\omega^{\ell q}-\omega^\ell)
(\chi_e^{-(2i+1)}(1+\omega^{q\ell} s)+\chi_e^{-(2i+1)}(-\omega^{q\ell})). 
\label{eq:sizeNs}
\end{align}
\end{proposition}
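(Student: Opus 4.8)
The plan is to realize $N_s$ as a short character sum built from the characteristic function $f$ of $\bigcup_{j\in H}C_j^{(e,q^2)}$ given in \eqref{eq:deff}, and then to evaluate it termwise using Lemmas~\ref{lem:charac_ge} and~\ref{lem:charac28}, in complete parallel with the proof of Proposition~\ref{prop:sizeHH}. The first observation is that, because $\ell$ is not divisible by $q+1$, the element $-\omega^{-\ell}$ does not lie in $\F_q$, so $1+x\omega^\ell\ne 0$ for every $x\in\F_q$; hence $f(1+x\omega^\ell)$ is precisely the indicator of $x\in D_{\ell,H}$. Since $x\in C_0^{(2,q)}+s$ holds exactly when $x\ne s$ and $\eta(x-s)=1$, we can write
\[
N_s=\sum_{x\in\F_q\setminus\{s\}}f(1+x\omega^\ell)\,\frac{1+\eta(x-s)}{2}=\tfrac12 S_1+\tfrac12 S_2,
\]
where $S_1=\sum_{x\in\F_q\setminus\{s\}}f(1+x\omega^\ell)$ and $S_2=\sum_{x\in\F_q\setminus\{s\}}f(1+x\omega^\ell)\,\eta(x-s)$; note that restricting to $x\ne s$ is needed so that $\frac{1+\eta(x-s)}{2}$ is genuinely the indicator in question.

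Substituting $f(x)=\frac1e\sum_{t=0}^{e-1}A_t\chi_e^t(x)$ with $A_t=\sum_{j\in H}\zeta_e^{-jt}$, and using that $A_0=|H|=e/2$ while $A_t=0$ for every even $t\ne 0$ (a consequence of $H\equiv\{0,1,\dots,e/2-1\}\pmod{e/2}$), only the terms with $t=0$ and with $t$ odd contribute. For $t=0$ the inner sum in $S_1$ is $q-1$, producing the constant $\frac{q-1}{2}$, and the inner sum in $S_2$ is $\sum_{y\in\F_q^\ast}\eta(y)=0$. For each odd $t=2i+1$ with $0\le i\le e/2-1$, I would apply Lemma~\ref{lem:charac_ge} with $\chi_e$ replaced by $\chi_e^{2i+1}$ to evaluate $\sum_{x\in\F_q}\chi_e^{2i+1}(1+x\omega^\ell)$, and then subtract the single missing term $\chi_e^{2i+1}(1+s\omega^\ell)$ coming from $x=s$; and I would apply Lemma~\ref{lem:charac28} with $\chi_e$ replaced by $\chi_e^{2i+1}$ to evaluate $\sum_{x\in\F_q\setminus\{s\}}\chi_e^{2i+1}(1+x\omega^\ell)\eta(x-s)$ directly. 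Both lemmas are legitimately applicable to these characters because the proofs of Lemmas~\ref{lem:charac_ge} and~\ref{lem:charac28} use only that the relevant character restricts to $\eta$ on $\F_q$ and is nontrivial on $\F_{q^2}$, and $\chi_e^{2i+1}$ restricts to $\eta^{2i+1}=\eta$ and is nontrivial on $\F_{q^2}$ (since $e$ is even, $e\nmid 2i+1$).

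Collecting the contributions, the constant parts give $\frac{q-1}{4}$; the terms $-\chi_e^{2i+1}(1+s\omega^\ell)$ from $S_1$ and $-\chi_e^{2i+1}(\omega^\ell)$ from $S_2$ combine into $-\frac{1}{2e}\sum_{i=0}^{e/2-1} A_{2i+1}\bigl(\chi_e^{2i+1}(1+\omega^\ell s)+\chi_e^{2i+1}(\omega^\ell)\bigr)$; and the Gauss-sum parts combine into $\frac{G_q(\eta)}{2eq}\sum_{i=0}^{e/2-1} A_{2i+1}G_{q^2}(\chi_e^{2i+1})\chi_e^{2i+1}(\omega^{\ell q}-\omega^\ell)\bigl(\chi_e(-1)\chi_e^{-(2i+1)}(\omega^{q\ell})+\chi_e^{-(2i+1)}(1+\omega^{q\ell}s)\bigr)$, where I used $\chi_e^{2i+1}(-1)=\chi_e(-1)$. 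The final cosmetic step is the identity $\chi_e(-1)\chi_e^{-(2i+1)}(\omega^{q\ell})=\chi_e^{-(2i+1)}(-1)\chi_e^{-(2i+1)}(\omega^{q\ell})=\chi_e^{-(2i+1)}(-\omega^{q\ell})$, valid because $\chi_e(-1)=\pm1$ and $2i+1$ is odd; this turns the expression into exactly \eqref{eq:sizeNs}. The computation is routine; the only points requiring care are the remark that $1+x\omega^\ell\ne0$, the vanishing of $A_t$ for even $t\ne0$, the bookkeeping of the excluded point $x=s$, and the verification that Lemmas~\ref{lem:charac_ge} and~\ref{lem:charac28} may be invoked with the odd powers $\chi_e^{2i+1}$ in place of $\chi_e$ — so I do not anticipate a genuine obstacle.
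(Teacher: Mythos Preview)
Your proof is correct and follows essentially the same approach as the paper: expressing $N_s$ via the characteristic functions $f$ and $g=\tfrac12(1+\eta)$, reducing to the odd-index terms using $A_t=0$ for even $t\ne 0$, and then evaluating $N_{s,1}^{(2i+1)}$ and $N_{s,2}^{(2i+1)}$ with Lemmas~\ref{lem:charac_ge} and~\ref{lem:charac28}. You are in fact slightly more careful than the paper in justifying why $1+x\omega^\ell\ne 0$, why the lemmas apply to the odd powers $\chi_e^{2i+1}$, and why $\chi_e^{2i+1}(-1)\chi_e^{-(2i+1)}(\omega^{q\ell})=\chi_e^{-(2i+1)}(-\omega^{q\ell})$.
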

\proof 
The characteristic function of $C_0^{(2,q)}$ is  given as 
\[
g(x)=\frac{1}{2}(\eta(x)+1), \quad  x\in \F_q^\ast. 
\]
The size $N_s$ of the set $D_{\ell,H}\cap (C_0^{(2,q)}+s)$ is expressed as 
\[
N_s=\sum_{x\in \F_q\setminus \{s\}}f(1+\omega^\ell x)g(x-s). 
\]
By the definitions of $g(x)$ and $f(x)$,  we have 
\begin{align}
N_s
=&\,\frac{1}{2e}\sum_{x\in \F_q\setminus \{s\}}(\eta(x-s)+1)\Big(e/2+\sum_{i=0}^{e/2-1}A_{2i+1}\chi_e^{2i+1}(1+\omega^\ell x)\Big) 
. \label{eq:com1gene}
\end{align}
Put $N_{s,1}^{(i)}=\sum_{x\in \F_q\setminus \{s\}}\chi_e^i(1+\omega^\ell x)$ and 
$N_{s,2}^{(i)}=\sum_{x\in \F_q\setminus \{s\}}\chi_e^i(1+\omega^\ell x)\eta(x-s)$. 
Then, continuing from \eqref{eq:com1gene}, 
\begin{equation}\label{eq:comN8}
N_s=
\frac{q-1}{4}+\frac{1}{2e}\sum_{i=0}^{e/2-1}A_{2i+1}(N_{s,1}^{(2i+1)}+N_{s,2}^{(2i+1)}). 
\end{equation}
By Lemmas~\ref{lem:charac_ge} and \ref{lem:charac28}, we have for 
odd $i$ 
\begin{align}\label{eq:nnn8}
N_{s,1}^{(i)}+N_{s,2}^{(i)}=
\frac{G_{q^2}(\chi_e^i)G_q(\eta)}{q}\chi_e^i(\omega^{\ell q}-\omega^\ell)
(\chi_e^{-i}(1+\omega^{q\ell} s)+\chi_e^{-i}(-\omega^{q\ell}))-(\chi_e^i(1+\omega^\ell s)+\chi_e^i(\omega^\ell)). 
\end{align}
Then, the assertion of the proposition follows. \qed 
\vspace{0.3cm}

We give another representation for the sizes of  $D_{\ell,H}\cap (C_0^{(2,q)}+s)$, $s\in \F_q$,  below. 

\begin{proposition}\label{prop:size:int2}
The size $N_s$ of $D_{\ell,H}\cap (C_0^{(2,q)}+s)$ is given by 
\begin{align*}
N_s=&\, \frac{q-1}{4}+\frac{1-\xi_s-\xi_\ell}{2} \nonumber\\
&\, \, +\frac{G_q(\eta)}{2q}\left(1+
\sum_{i\in H}\psi_{\F_{q^2}}((\omega^{\ell q}-\omega^\ell)^{-1}(1+\omega^{q\ell} s)C_i^{(e,q^2)})+\sum_{i\in H}\psi_{\F_{q^2}}(-(\omega^{\ell q}-\omega^\ell)^{-1}\omega^{q\ell}C_i^{(e,q^2)})\right), 
\end{align*}
where 
$\xi_s=1$ if $1+\omega^\ell s\in \bigcup_{j\in H}C_j^{(e,q^2)}$ and $0$ otherwise, and $\xi_\ell=1$ if $\omega^\ell\in \bigcup_{j\in H}C_j^{(e,q^2)}$ and $0$ otherwise.  
\end{proposition}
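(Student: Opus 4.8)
The plan is to derive Proposition~\ref{prop:size:int2} from Proposition~\ref{prop:size:int} by converting each exponential sum of the form $\sum_i A_{2i+1}(\cdots)$ into a Gauss-period expression via the orthogonality of characters, in exactly the same way that Proposition~\ref{prop:size_charac} was obtained from Proposition~\ref{prop:sizeHH}. There are two such sums to handle in \eqref{eq:sizeNs}: the ``elementary'' part $-\frac{1}{2e}\sum_{i=0}^{e/2-1}A_{2i+1}(\chi_e^{2i+1}(1+\omega^\ell s)+\chi_e^{2i+1}(\omega^\ell))$, and the ``Gauss-sum'' part $\frac{G_q(\eta)}{2eq}\sum_{i=0}^{e/2-1}A_{2i+1}G_{q^2}(\chi_e^{2i+1})\chi_e^{2i+1}(\omega^{\ell q}-\omega^\ell)(\chi_e^{-(2i+1)}(1+\omega^{q\ell}s)+\chi_e^{-(2i+1)}(-\omega^{q\ell}))$.

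First I would note that since $H\equiv\{0,1,\ldots,e/2-1\}\,(\mod{e/2})$ we have $A_i=0$ for even $i\neq 0$ and $A_0=e/2$, so each sum over $i=0,\ldots,e/2-1$ of the odd-indexed terms can be rewritten as $\frac{1}{e}$ times a sum over \emph{all} $i=0,\ldots,e-1$ after adding and subtracting the $i=0$ term. Concretely, $\frac{1}{e}\sum_{i=0}^{e-1}A_i\chi_e^i(x)=f(x)$, the characteristic function of $\bigcup_{j\in H}C_j^{(e,q^2)}$ from \eqref{eq:deff}; evaluated at $x=1+\omega^\ell s$ this gives $\xi_s$ and at $x=\omega^\ell$ it gives $\xi_\ell$. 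Accounting for the $i=0$ contribution (which is $A_0=e/2$, i.e.\ the constant $1$ after dividing by $e$), the elementary part becomes $-\frac{1}{2}\big((\xi_s-\tfrac12)+(\xi_\ell-\tfrac12)\big)=\frac{1-\xi_s-\xi_\ell}{2}-\frac12+\frac12=\frac{1-\xi_s-\xi_\ell}{2}$; I would just bookkeep the constants carefully to land on the stated $\frac{1-\xi_s-\xi_\ell}{2}$ term.

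For the Gauss-sum part, I would apply the same orthogonality identity already used in the proof of Proposition~\ref{prop:size_charac}, namely $\frac{1}{e}\sum_{i=0}^{e-1}A_i G_{q^2}(\chi_e^i)\chi_e^{-i}(u)\chi_e^i(v)=\sum_{j\in H}\psi_{\F_{q^2}}(u^{-1}v\,C_j^{(e,q^2)})$ (using $G_{q^2}(\chi_e^0)=-1$ for the $i=0$ term). Here the first subsum in \eqref{eq:nnn8}, with $\chi_e^{-i}(1+\omega^{q\ell}s)\chi_e^i(\omega^{\ell q}-\omega^\ell)$, becomes $\sum_{j\in H}\psi_{\F_{q^2}}((\omega^{\ell q}-\omega^\ell)^{-1}(1+\omega^{q\ell}s)C_j^{(e,q^2)})$ plus the $i=0$ correction, and the second subsum, with $\chi_e^{-i}(-\omega^{q\ell})\chi_e^i(\omega^{\ell q}-\omega^\ell)$, becomes $\sum_{j\in H}\psi_{\F_{q^2}}(-(\omega^{\ell q}-\omega^\ell)^{-1}\omega^{q\ell}C_j^{(e,q^2)})$ plus its $i=0$ correction. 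The two $i=0$ corrections each contribute $-\frac{1}{e}\cdot\frac{e}{2}\cdot(-1)=\frac12$ (that is, $A_0 G_{q^2}(\chi_e^0)/e=-1/2$ subtracted off), together producing the ``$1+$'' inside the parenthesis of the claimed formula after multiplying by the prefactor $\frac{G_q(\eta)}{2q}$.

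The main obstacle is purely bookkeeping: tracking the several $i=0$ correction terms and the factors of $\frac12$, $\frac{1}{e}$, and $\chi_e(-1)$ so that they coalesce precisely into the constants $\frac{1-\xi_s-\xi_\ell}{2}$ and the leading $1$ inside the bracket, and confirming that $\chi_e(-1)$ cancels appropriately (note $\chi_e^{-i}(-\omega^{q\ell})=\chi_e(-1)\chi_e^{-i}(\omega^{q\ell})$ when $i$ is odd, which is where the $-(\omega^{\ell q}-\omega^\ell)^{-1}\omega^{q\ell}$ argument with the explicit minus sign comes from). No new ideas beyond Propositions~\ref{prop:sizeHH}--\ref{prop:size:int} and the orthogonality relation are needed; I would simply run the argument of Proposition~\ref{prop:size_charac} twice, once for each character-sum argument appearing in \eqref{eq:nnn8}, and collect terms.
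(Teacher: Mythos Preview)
Your proposal is correct and follows essentially the same route as the paper: both derive Proposition~\ref{prop:size:int2} from Proposition~\ref{prop:size:int} by completing the odd-indexed sums to full sums over $i=0,\ldots,e-1$, interpreting the elementary part via the characteristic function $f$ (giving $\xi_s,\xi_\ell$), and applying the same orthogonality identity used for Proposition~\ref{prop:size_charac} to the Gauss-sum part, with the two $i=0$ corrections producing the ``$1+$'' inside the bracket. The paper's proof is terser but structurally identical; your only task is indeed the bookkeeping you flag (and note a small slip in your stated general identity---the argument should be $uv^{-1}$, not $u^{-1}v$, though your application to the specific $u,v$ comes out right).
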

\proof 
By the orthogonality of characters, we have 
\[
\frac{1}{e}\sum_{i=0}^{e-1}A_{i}
G_{q^2}(\chi_e^{i})\chi_e^{i}(\omega^{\ell q}-\omega^\ell)
\chi_e^{-i}(1+\omega^{q\ell} s)=
\sum_{i\in H}\psi_{\F_{q^2}}((\omega^{\ell q}-\omega^\ell)^{-1}(1+\omega^{q\ell} s)C_i^{(e,q^2)}) 
\]
and \[
\frac{1}{e}\sum_{i=0}^{e-1}A_{i}
G_{q^2}(\chi_e^{i})\chi_e^{i}(\omega^{\ell q}-\omega^\ell)
\chi_e^{-i}(-\omega^{q\ell})=
\sum_{i\in H}\psi_{\F_{q^2}}(-(\omega^{\ell q}-\omega^\ell)^{-1}\omega^{q\ell}C_i^{(e,q^2)}). 
\] 
Furthermore, we note that 
\[
\frac{1}{2e}\sum_{i=0}^{e/2-1}A_{2i+1}(\chi_e^i(1+\omega^\ell s)+\chi_e^i(\omega^\ell))=\frac{-1+\xi_s+\xi_\ell}{2}. 
\]
Then, by Proposition~\ref{prop:size:int}, the assertion of the proposition follows.  \qed 
 \vspace{0.3cm}

Similarly to Proposition~\ref{prop:size_charac}, the  proposition above implies that the sizes of $D_{\ell,H}\cap (C_0^{(2,q)}+s)$, $s\in \F_q$, can be expressed in terms of character values of $\bigcup_{i\in H}C_i^{(e,q^2)}$.   
\section{Construction of biregular Hadamard matrices with maximum excess: the case where $4m^2+4m+3$ is a prime power}\label{sec:const1}
\subsection{Construction of biregular Hadamard matrices from quadratic residues of $\F_q$ with $q\equiv 3\,(\mod{4})$}

Let $q\equiv 3\,(\mod{4})$ be a prime power.  
Set $P=\F_q$ and 
\begin{equation}\label{eq:desquad}
{\mathcal B}=\{\{x+a:x \in C_0^{(2,q)}\cup \{0\}\}:a \in \F_q\}. 
\end{equation}
Then, $(P,{\mathcal B})$ is a symmetric $2$-design, 
the so-called {\it Paley design}.  
Let $M$ be a $q\times q$ $(1,-1)$-matrix whose rows and columns are labeled by the elements of $\F_q$ and entries are defined by 
\[
M_{i,j}=\begin{cases}
1,& \text{ if } j-i\in C_0^{(2,q)} \cup \{0\},\\
-1,& \text{ if } j-i\in \F_q\setminus (C_0^{(2,q)} \cup \{0\}). 
\end{cases} 
\]
Define 
\begin{equation}\label{eq:conf}
H=\begin{pmatrix} -1  & {\bf 1}_q^\tra \\ {\bf 1}_q & M \end{pmatrix}. 
\end{equation}
Then, $H$ forms an Hadamard matrix of order $n=q+1$. 
We now transform $H$ to a biregular Hadamard matrix with maximum excess by negating some rows and  columns of $H$. 
\begin{proposition}\label{prop:const1}
Let $q=4m^2+4m+3$ be a prime power and $(P,{\mathcal B})$ be the block design defined  in \eqref{eq:desquad}.  Assume that there is a four-intersection set with parameters $(2m^2+m+2;\{m^2+1,m^2+2,m^2+m+1,m^2+m+2\})$ for $(P,{\mathcal B})$.  
Then, there exists a biregular Hadamard 
matrix $H'$ of order $n=4(m^2+m+1)$ such that $H'{\bf 1}_n$ has entries $2m-2$ and 
$2m+2$.  
\end{proposition}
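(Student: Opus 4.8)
The plan is to obtain $H'$ from $H$ by negating the columns indexed by the four-intersection set $D$ and the rows indexed by the appropriate classes of its dual, and then to check by a direct computation that the resulting row sums collapse to the two values $2m\pm2$.

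Identify the rows and columns of the matrix $H$ in \eqref{eq:conf} with $\{\infty\}\cup\F_q$, where $\infty$ labels the first row and column, and let ${\bf x}\in\{0,1\}^q$ be the support vector of $D$. I would first take the column-negation vector $\epsilon=(\epsilon_\infty,{\bf e})$ with $\epsilon_\infty=1$ and ${\bf e}={\bf 1}_q-2{\bf x}$ (so exactly the columns of $H$ lying in $D$ get negated), and compute $H\epsilon$ block by block. Using that row $a$ of $M$ is the $\pm1$-indicator of the Paley block $B_a=\{a\}\cup(C_0^{(2,q)}+a)$, which has size $(q+1)/2=2m^2+2m+2$, one gets $M{\bf 1}_q={\bf 1}_q$, hence the $a$-th entry of the lower block ${\bf 1}_q+M{\bf e}$ of $H\epsilon$ equals $4m^2+2m+6-4\,|B_a\cap D|$, while its $\infty$-entry equals $q-2|D|-1=2m-2$. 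Substituting the four admissible values $|B_a\cap D|\in\{m^2+1,m^2+2,m^2+m+1,m^2+m+2\}$ guaranteed by the hypothesis shows that every entry of $H\epsilon$ lies in $\{2m+2,\,2m-2,\,-(2m-2),\,-(2m+2)\}$, with $2m\pm2$ arising precisely when $|B_a\cap D|\in\{m^2+1,m^2+2\}$ and $-(2m\pm2)$ precisely when $B_a\in D_{m^2+m+1}^{\perp}\cup D_{m^2+m+2}^{\perp}$.

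Next I would define the row-negation vector $\delta$ by $\delta_\infty=1$, $\delta_a=-1$ if $B_a\in D_{m^2+m+1}^{\perp}\cup D_{m^2+m+2}^{\perp}$, and $\delta_a=1$ otherwise, and put $H'=\mathrm{diag}(\delta)\,H\,\mathrm{diag}(\epsilon)$. Being obtained from $H$ by negating rows and columns, $H'$ is again an Hadamard matrix, of order $n=q+1=4(m^2+m+1)$, and by construction $H'{\bf 1}_n=\mathrm{diag}(\delta)(H\epsilon)$ has all its entries in $\{2m-2,2m+2\}$, both nonnegative since $m\ge1$. Finally, because $m^2<m^2+m+1<(m+1)^2$, the order $n$ is not a perfect square, so $H'$ cannot be regular; therefore both values $2m-2$ and $2m+2$ must actually occur, and $H'$ is biregular with the asserted row sums.

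The computations involved are routine and present no analytic difficulty; the only substantive point is the choice of \emph{which} rows and columns to negate so that $H'{\bf 1}_n$ degenerates to a two-element set, and the precise numerology needed for this collapse has been handed to us in the hypothesis through the specific parameters $(2m^2+m+2;\{m^2+1,m^2+2,m^2+m+1,m^2+m+2\})$. The genuine difficulty — exhibiting such a four-intersection set for the Paley design when $4m^2+4m+3$ is a prime power — will be handled separately by means of the sets $D_{\ell,H}$ constructed in Section~\ref{sec:comp}.
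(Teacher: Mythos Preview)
Your argument is correct and is essentially the paper's proof: you negate the columns indexed by $D$ and the rows indexed by $D_{m^2+m+1}^{\perp}\cup D_{m^2+m+2}^{\perp}$, which is exactly the transformation $H'=B^{\perp}HB$ carried out in the paper, and your block computation of $H'\mathbf{1}_n$ matches the paper's line by line. Your final remark that $n=4(m^2+m+1)$ is a nonsquare, hence $H'$ cannot be regular and both values must occur, is a nice explicit justification of biregularity that the paper leaves implicit (there it follows from all four intersection sizes actually being attained by the hypothesis).
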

\proof 
Let $D$ be the assumed four-intersection set and $H$ be the Hadamard matrix defined  in \eqref{eq:conf}. Set 
\[
(\alpha_1,\alpha_2,\alpha_3,\alpha_4)=(m^2+1,m^2+2,m^2+m+1,m^2+m+2). 
\]
Let ${\bf x}$ be the support vector of $D$ in $P$.
Furthermore, let 
$A$ (resp. $A^\perp$) be the $q\times q$-diagonal matrix, 
whose entries are defined by $A_{i,i}=-1$ if $i\in D$ and $1$ otherwise (resp. $A^\perp_{i,i}=-1$ if $i\in D_{\alpha_3}^\perp \cup D_{\alpha_4}^\perp$ and $1$ otherwise). 
Define 
\[
B=\begin{pmatrix} 1  & {\bf 0}_q^\tra \\ {\bf 0}_q & A \end{pmatrix}, \quad 
B^\perp=\begin{pmatrix} 1  & {\bf 0}_q^\tra \\ {\bf 0}_q & A^\perp \end{pmatrix}, 
\]
where ${\bf 0}_q$ is the all-zero vector of length $q$. Then,  
$H'=B^\perp H B$ is the desired Hadamard matrix. 

We now show that $B^\perp H B{\bf 1}_{q+1}$ has either $2m-2$ or $2m+2$ as its entries. By the definitions of $B$, $B^\perp$ and $H$, we have 
\begin{align*}
B^\perp H B{\bf 1}_{q+1}=&\, 
\begin{pmatrix} 1  & {\bf 0}_q^\tra \\ {\bf 0}_q & A^\perp \end{pmatrix}
\begin{pmatrix} -1  & {\bf 1}_q^\tra \\ {\bf 1}_q & M \end{pmatrix}
\begin{pmatrix} 1  & {\bf 0}_q^\tra \\ {\bf 0}_q & A \end{pmatrix}
\begin{pmatrix} 1 \\ {\bf 1}_q \end{pmatrix}\\
=&\, 
\begin{pmatrix} -1  + {\bf 1}_q^\tra  A {\bf 1}_q \\
A^\perp ({\bf 1}_q+MA{\bf 1}_q)
 \end{pmatrix}. 
\end{align*}
Since $|D|=2m^2+m+2$, we have $-1  + {\bf 1}_q^\tra  A {\bf 1}_q=2m-2$. 
Furthermore, since $D$ is a four-intersection set with parameters $(2m^2+m+2;\{m^2+1,m^2+2,m^2+m+1,m^2+m+2\})$ for $(P,{\mathcal B})$, we have 
\[
\left(\frac{1}{2}(M+J_q){\bf x}\right)_i=\begin{cases}
m^2+1,& \text{ if } i\in D_{\alpha_1}^\perp,\\
m^2+2,& \text{ if } i\in D_{\alpha_2}^\perp,\\
m^2+m+1,& \text{ if } i\in D_{\alpha_3}^\perp,  \\
m^2+m+2,& \text{ if } i\in D_{\alpha_4}^\perp, 
\end{cases}
\]
where $D_{\alpha_i}^\perp$ is defined in \eqref{eq:defdual}. 
Hence, 
\[
A^\perp \left({\bf 1}_q+MA{\bf 1}_q\right)_i=
A^\perp \left({\bf 1}_q+M{\bf 1}_q-2M{\bf x}\right)_i=
\begin{cases}
2m+2,& \text{ if } i\in D_{\alpha_1}^\perp \cup D_{\alpha_4}^\perp,\\
2m-2,& \text{ if } i\in D_{\alpha_2}^\perp \cup D_{\alpha_3}^\perp. 
\end{cases}
\]
This completes the proof of the proposition. 
 \qed
\subsection{Construction of four-intersection sets satisfying the condition of Proposition~\ref{prop:const1}}
Let $q=4m^2+4m+3$ be a prime power. 
We will follow notations in Section~\ref{sec:comp}  with $e=8$. 
Let $\chi_8$ be a multiplicative character of order $8$ of $\F_{q^2}$ and 
$\eta$ be the quadratic character of $\F_{q}$. Note that the restriction of $\chi_8$ to $\F_q$ is of order $2$, i.e., ${\chi_8}_{|\F_q}=\eta$.  Assume that 
$\chi_8(\omega)=\zeta_8=\sqrt{2}(1+\zeta_4)/2$. By Theorem~\ref{thm:Gauss8}, there are $\epsilon,\delta\in \{-1,1\}$ such that $G_{q^2}(\chi_8)=G_q(\eta)(\epsilon (2m+1)+\delta \sqrt{-2})$.  

Let $\ell$ be an integer not divisible by $q+1$. 
For any fixed $h'\in \{0,1,2,3\}$, put $h=2h'+(1-\epsilon \delta)/2\in \{0,1,\ldots,7\}$. 
Furthermore, fix $\ell$ so that the following conditions are satisfied: 
\begin{equation}\label{eq:cond2}
\chi_8(\omega^\ell)=\zeta_8^{2-5\epsilon \delta-6h'},
\quad 
\chi_8(\omega^{\ell q}-\omega^\ell)=-\zeta_4^{\delta}.  
\end{equation}
We will see in Remark~\ref{rem:cond2} that such a pair $(h',\ell)\in \{0,1,2,3\}\times \{0,1,\ldots,q^2-2\}$ exists.

\begin{theorem}\label{thm:twoint}
Let $q=4m^2+4m+3$ be a prime power, and 
let $h$ and $\ell$ be integers 
defined as above. 
Write $H:=\{h+i\,(\mod{8})\,:\,i=0,1,2,3\}$ and 
define
\[
D_{\ell,H}=\left\{x\in \F_q\,:\, 1+x\omega^\ell\in \bigcup_{i\in H}C_{i}^{(8,q^2)}\right\}.  
\]
Then, it holds that $|D_{\ell,H}|=2m^2+m+2$ and 
\[
\{|D_{\ell,H}\cap ((C_0^{(2,q)}\cup \{0\})+s)|:s\in \F_q\}= \{m^2+1,m^2+2,m^2+m+1,m^2+m+2\}. 
\] 
\end{theorem}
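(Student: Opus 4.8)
The plan is to evaluate $|D_{\ell,H}|$ and the intersection numbers $N_s:=|D_{\ell,H}\cap((C_0^{(2,q)}\cup\{0\})+s)|$, $s\in\F_q$, directly from the Gauss-sum formulas of Section~\ref{sec:comp} specialized to $e=8$, and then to read off the four values. The three ingredients are the quadratic Gauss sum (Theorem~\ref{thm:Gauss}), the octic Gauss sum over $\F_{q^2}$ (Theorem~\ref{thm:Gauss8}), and the Davenport--Hasse lifting formula (Theorem~\ref{thm:lift}), all fed through the two normalizations of $\ell$ recorded in \eqref{eq:cond2}; the latter are exactly what forces the dominant terms of the Gauss-sum expansion to cancel.

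First I would collect the arithmetic data. Since $q=4m^2+4m+3\equiv 3\pmod 4$, the element $-1$ is a nonsquare in $\F_q$, so $\eta(-1)=-1$ and hence $G_q(\eta)=\eta(-1)\overline{G_q(\eta)}=-\overline{G_q(\eta)}$, giving $G_q(\eta)^2=-q$. Writing $-1=\omega^{(q^2-1)/2}$ with $(q^2-1)/2=4(2m^2+2m+1)(m^2+m+1)$ and the second factor odd, we get $\chi_8(-1)=\zeta_8^{(q^2-1)/2}=-1$. Since $p$ is odd, $(2m+1)+\sqrt{-2}$ is not divisible by $p$, so $q=(2m+1)^2+2\cdot 1^2$ is the proper representation appearing in Theorem~\ref{thm:Gauss8}; applying that theorem to each of the order-$8$ characters $\chi_8^j$, $j\in\{1,3,5,7\}$, yields $G_{q^2}(\chi_8^j)=G_q(\eta)\bigl(a_j(2m+1)+b_j\sqrt{-2}\bigr)$ with $a_j,b_j\in\{1,-1\}$, the sign pairs being constrained by $G_{q^2}(\chi_8^{-j})=-\overline{G_{q^2}(\chi_8^{j})}$ (as $\chi_8^j(-1)=-1$ for odd $j$) and compatible with the choice of $\epsilon,\delta$, while $G_{q^2}(\chi_8^4)=-G_q(\eta)^2=q$ by Theorem~\ref{thm:lift}. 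In particular $G_q(\eta)\,G_{q^2}(\chi_8^j)=-q\bigl(a_j(2m+1)+b_j\sqrt{-2}\bigr)$ is what enters \eqref{eq:sizeH} and \eqref{eq:sizeNs}.

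For the size, I use \eqref{eq:sizeH}. As $H=\{h,h+1,h+2,h+3\}\pmod 8$ is an interval of length four and $\zeta_8^{-4i}=-1$ for odd $i$, the coefficients $A_i=\sum_{j\in H}\zeta_8^{-ji}$ vanish for even $i\neq 0$, equal $4$ for $i=0$, and for odd $i$ equal $A_i=\zeta_8^{-hi}\sum_{t=0}^{3}\zeta_8^{-ti}=\dfrac{2\,\zeta_8^{-hi}}{1-\zeta_8^{-i}}$. Substituting these, the Gauss sums above, and $\chi_8(\omega^\ell)=\zeta_8^{2-5\epsilon\delta-6h'}$, $\chi_8(\omega^{\ell q}-\omega^\ell)=-\zeta_4^{\delta}$ from \eqref{eq:cond2} into \eqref{eq:sizeH}, the exponent $2-5\epsilon\delta-6h'$ and the relation $h\equiv 2h'+(1-\epsilon\delta)/2$ are arranged precisely so that the phases $\chi_8^{-(2i+1)}(\omega^{q\ell})\,\chi_8^{2i+1}(\omega^{\ell q}-\omega^\ell)$ realign the four summands: the parts proportional to $(2m+1)$ cancel in pairs, the $\sqrt{-2}$-parts combine to a rational number, and the dependence on $h'$ drops out, leaving $|D_{\ell,H}|=\dfrac{q}{2}-\dfrac{2m-1}{2}=2m^2+m+2$. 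For the intersection numbers I use Proposition~\ref{prop:size:int}; because the statement uses $C_0^{(2,q)}\cup\{0\}$ rather than $C_0^{(2,q)}$, the quantity $N_s$ equals the right-hand side of \eqref{eq:sizeNs} plus the indicator of $s\in D_{\ell,H}$, i.e. of $1+\omega^\ell s\in\bigcup_{i\in H}C_i^{(8,q^2)}$ (the $\xi_s$ of Proposition~\ref{prop:size:int2}); it is exactly this extra $\{0\}$ that turns a two-valued quantity into a four-valued one. After the same substitution, the Gauss-sum part of \eqref{eq:sizeNs} collapses by the same cancellation to a constant, and the only surviving $s$-dependence is in $\chi_8(1+\omega^\ell s)$ and $\chi_8(1+\omega^{q\ell}s)=\overline{\chi_8(1+\omega^\ell s)}$, that is, in which cyclotomic class $C_i^{(8,q^2)}$ contains $1+\omega^\ell s$; grouping the eight cases (of which four also record $\xi_s$) produces exactly the four values $m^2+1,\,m^2+2,\,m^2+m+1,\,m^2+m+2$.

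It remains to check that all four values actually occur, i.e. that $1+\omega^\ell s$ meets the relevant cyclotomic classes as $s$ ranges over $\F_q$; this follows from the exact count $\#\{s\in\F_q:1+\omega^\ell s\in C_i^{(8,q^2)}\}$, which Lemma~\ref{lem:charac_ge} writes as $q/8$ plus Gauss-sum terms of modulus at most $\sqrt q$ (equivalently, via the character-value reformulation of Proposition~\ref{prop:size:int2}), so each such count is positive. The step I expect to be the real work is the simplification just described: bookkeeping the eight eighth-root-of-unity coefficients carefully enough that the terms of size $O(q)$ and $O(\sqrt q)$ in the Gauss-sum expansions of $|D_{\ell,H}|$ and $N_s$ visibly cancel --- uniformly in the free parameter $h'$ --- leaving the stated answers, and verifying that it is precisely the two conditions in \eqref{eq:cond2} that make this happen.
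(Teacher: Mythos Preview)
Your approach is essentially identical to the paper's: compute $|D_{\ell,H}|$ via Proposition~\ref{prop:sizeHH} and $N_s$ via Proposition~\ref{prop:size:int} plus the indicator $\xi_s$ accounting for the extra point $\{0\}$, then substitute the Gauss-sum data and the normalizations \eqref{eq:cond2} and reduce to a case analysis on which class $C_i^{(8,q^2)}$ contains $1+\omega^\ell s$---exactly the paper's equations \eqref{eq:size88}--\eqref{eq:dual8}. One harmless slip: since $q\equiv 3\pmod 8$ one has $\chi_8(1+\omega^{q\ell}s)=\chi_8((1+\omega^\ell s)^q)=\chi_8^{3}(1+\omega^\ell s)$, not the complex conjugate, though this does not affect your conclusion that the $s$-dependence factors through the cyclotomic index; your extra verification that all four values actually occur is a point the paper leaves implicit.
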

This theorem implies that $D_{\ell,H}$ is a four-intersection set satisfying the condition of Proposition~\ref{prop:const1}. Then, our main Theorem~\ref{thm:main1} follows. 
\vspace{0.3cm}

{\bf  Proof of Theorem~\ref{thm:twoint}:}\,  
We first evaluate the size of $D_{\ell,H}$. Let $A_i=\sum_{j=0}^3\zeta_8^{-(h+j)i}$. By Proposition~\ref{prop:sizeHH}, we have 
\begin{equation}\label{eq:size88}
|D_{\ell,H}|=\frac{q}{2}-\frac{G_q(\eta)}{8q}\sum_{i=0}^{3} A_{2i+1} G_{q^2}(\chi_8^{2i+1})\chi_8^{-(2i+1)}(\omega^{q\ell})\chi_8^{2i+1}(\omega^{\ell q}-\omega^{\ell}). 
\end{equation}
Here, 
$G_{q^2}(\chi_8)G_q(\eta)=-q(\epsilon (2m+1)+\delta \sqrt{-2})$ by Theorem~\ref{thm:Gauss8}. 
Substituting $G_{q^2}(\chi_8)G_q(\eta)=-q(\epsilon (2m+1)+\delta \sqrt{-2})$, $\chi_8(\omega^\ell)=\zeta_8^{2-5\epsilon \delta-6h'}$ and 
$\chi_8(\omega^{\ell q}-\omega^\ell)=-\zeta_4^{\delta}$ into \eqref{eq:size88}, it is routine to see that $|D_{\ell,h}|=2m^2+m+2$.

Next, we evaluate the size $N_s$ of the set $D_{\ell,H}\cap ((C_0^{(2,q)}\cup \{0\})+s)$. 
By Proposition~\ref{prop:size:int}, 
\begin{align}
N_s=&\, \frac{1}{8}(4+\sum_{i=1,3,5,7}A_i\chi_8^i(1+\omega^\ell s))+\frac{q-1}{4}-\frac{1}{16}\sum_{i=0}^{3}
A_{2i+1}(\chi_8^{2i+1}(1+\omega^\ell s)+\chi_8^{2i+1}(\omega^\ell)) \nonumber\\
&\, \, +\frac{G_q(\eta)}{16q}\sum_{i=0}^{3}A_{2i+1}
G_{q^2}(\chi_8^{2i+1})\chi_8^{2i+1}(\omega^{\ell q}-\omega^\ell)
(\chi_8^{-(2i+1)}(1+\omega^{q\ell} s)-\chi_8^{-(2i+1)}(\omega^{q\ell})). 
\label{eq:comN8}
\end{align}
Substituting  
$G_{q^2}(\chi_8)G_q(\eta)=-q(\epsilon (2m+1)+\delta \sqrt{-2})$, $\chi_8(\omega^\ell)=\zeta_8^{2-5\epsilon \delta-6h'}$, and  
$\chi_8(\omega^{\ell q}-\omega^\ell)=-\zeta_4^{\delta}$ into \eqref{eq:comN8}, it is routine to see that 
\begin{equation}\label{eq:dual8}
N_s=
\begin{cases}
m^2+m+2,& \text{ if $\epsilon \delta=1$ and $\chi_8(1+\omega^\ell s)\in \{\zeta_8^{2h'},\zeta_8^{2h'+3},\zeta_8^{2h'+6}\}$,} 
\\
 & \text{\quad or $\epsilon \delta=-1$ and $\chi_8(1+\omega^\ell s)\in \{\zeta_8^{2h'+1},\zeta_8^{2h'+4},\zeta_8^{2h'+6}\}$,} 
\\
m^2+2,& \text{ if $\epsilon \delta=1$ and $\chi_8(1+\omega^\ell s)= \zeta_8^{2h'+1}$,}\\
 & \text{\quad or $\epsilon \delta=-1$ and $\chi_8(1+\omega^\ell s)= \zeta_8^{2h'+3}$,} 
\\
m^2+1,& \text{ if  $\epsilon \delta=1$ and $\chi_8(1+\omega^\ell s)\in \{\zeta_8^{2h'+2},\zeta_8^{2h'+4},\zeta_8^{2h'+7}\}$, }\\
 & \text{\quad or $\epsilon \delta=-1$ and $\chi_8(1+\omega^\ell s)\in \{\zeta_8^{2h'},\zeta_8^{2h'+2},\zeta_8^{2h'+5}\}$,} 
\\
m^2+m+1,& \text{ if  $\epsilon \delta=1$ and $\chi_8(1+\omega^\ell s)=\zeta_8^{2h'+5}$,}
\\ & \text{\quad or $\epsilon \delta=-1$ and $\chi_8(1+\omega^\ell s)=\zeta_8^{2h'+7}$.} 
\end{cases} 
\end{equation}
Thus,  $N_s\in \{m^2+1,m^2+2,m^2+m+1,m^2+m+2\}$. 
\qed 

\begin{remark}{\em \label{rem:cond2}
In this remark, we show that there exists a pair $(h',\ell)\in \{0,1,2,3\} \times \{0,1,\ldots,q^2-2\}$ satisfying the condition 
\eqref{eq:cond2}, i.e., 
the set 
\[
\left\{(h',\ell):(q+1) \not | \, \ell, \chi_8(\omega^\ell)=\zeta_8^{2-5\epsilon \delta-6h'},\chi_8(\omega^{\ell q}-\omega^\ell)=-\zeta_4^{\delta}\right\}
\]
is nonempty. 
Note that $\omega^{\ell q}-\omega^\ell=-\omega^{-\frac{q+1}{2}}\Tr_{q^2/q}(\omega^{\ell+\frac{q+1}{2}})$
is a square but  not a fourth power in $\F_{q^2}$. Hence,  the condition that $\chi_8(\omega^{\ell q}-\omega^\ell)=-\zeta_4^{\delta}$ is equivalent to that 
\[
\eta(\Tr_{q^2/q}(\omega^{\ell+\frac{q+1}{2}}))=\zeta_4^{\delta+(m^2+m+1)}. 
\] 
On the other hand, the condition $\chi_8(\omega^\ell)=\zeta_8^{2-5\epsilon \delta-6h'}$ for $h'\in \{0,1,2,3\}$ is equivalent to 
that $\omega^\ell\in C_1^{(2,q^2)}$. This is 
valid whenever $\ell$ is odd. 
Then, the condition $(q+1)\not |\,\ell$ is  satisfied. 
Therefore, it is enough to see that each of the sets 
\[
T_i=\left\{\omega^\ell\in C_{1}^{(2,q^2)}\,:\,\T_{q^2/q}(\omega^{\ell+\frac{q+1}{2}})\in C_i^{(2,q)}\right\}, \quad i=0,1,
\]
is nonempty. In \cite[Remark~2]{MS}, it was shown that 
\begin{equation}\label{eq:tisize}
|T_i|=-\frac{q-1}{2q}\sum_{x\in C_1^{(2,q^2)}}\psi_{\F_{q^2}}(x\omega^{\frac{q+1}{2}})+\frac{(q-1)(q^2-1)}{4q}. 
\end{equation}
By \eqref{eq:Gaussquad} and Theorem~\ref{thm:Gauss}, continuing from \eqref{eq:tisize}, we have 
\[
|T_i|=-\frac{q-1}{2q}\left(\frac{-1-G_{q^2}(\eta)}{2}\right)+\frac{(q-1)(q^2-1)}{4q}=\frac{q^2-1}{4}>0. 
\]
Hence, each $T_i$ is nonempty. 
}\end{remark}

\section{Construction of biregular Hadamard matrices with maximum excess: the case where $2m^2+2m+1$ is a prime power}\label{sec:const2}
\subsection{Construction of biregular Hadamard matrices from quadratic residues of $\F_q$ with $q\equiv 1\,(\mod{4})$}\label{subsec:const2}
Let $q\equiv 1\,(\mod{4})$ be a prime power. 
Let $M$ be a $q\times q$ $(0,1,-1)$-matrix whose rows and columns are labeled by the elements of $\F_q$ and entries are defined by 
\[
M_{i,j}=\begin{cases}
0,& \text{ if } j-i=0,\\
1,& \text{ if } j-i\in C_0^{(2,q)},\\
-1,& \text{ if } j-i\in \F_q \setminus (C_0^{(2,q)} \cup \{0\}). 
\end{cases} 
\]
Define $M_1=M+I_q$, $M_2=M-I_q$, and $M_3=-M_1$. Furthermore, 
define 
\begin{equation}\label{eq:hada2}
H=\begin{pmatrix} 
1  &-1 &  {\bf 1}_q^\tra&  {\bf 1}_q^\tra  \\
-1  & -1 & {\bf 1}_q^\tra&-{\bf 1}_q^\tra \\
{\bf 1}_q  &  {\bf 1}_q &M_1 & M_2 \\
{\bf 1}_q  & -{\bf 1}_q&M_2 & M_3  
 \end{pmatrix}. 
\end{equation}
Then, $H$ forms a symmetric Hadamard matrix of order $n=2q+2$. 
We transform $H$ to a biregular Hadamard matrix with maximum excess by 
negating some rows and columns of $H$. 

Put \[
N_1=\begin{pmatrix} M_1 \\ M_2  \end{pmatrix} \, \, \mbox{ and }\,  N_2=\begin{pmatrix}  M_2 \\ M_3   \end{pmatrix}. 
\]
We label the rows of the upper half submatrices (resp. the lower half submatrices) of $N_1$ and $N_2$ by the elements of $\{0\}\times \F_q$ (resp. 
$\{1\}\times \F_q$) 
so that $(0,j)$th rows (resp. $(1,j)$th rows) of $N_1$ and $N_2$ are respectively corresponding to $j$th rows of $M_1$ and $M_2$ (resp. $M_2$ and $M_3$). Furthermore, we can, and do, label the columns of $N=\begin{pmatrix} N_1& N_2  \end{pmatrix} $  in the same way with the rows of $N$ since  $N$ is symmetric. Put $P=\{0,1\}\times \F_q$, and let $(P,{\mathcal B}_i)$, $i=1,2$, be the block designs with incidence matrices $(N_i+J_{2q,q})/2$, $i=1,2$, respectively. 

\begin{proposition}\label{prop:const2}
Let $q=2m^2+2m+1$ be a prime power and $(P,{\mathcal B}_i)$, $i=1,2$, be the block designs defined above.  Assume that there is a $2m^2+m$-subset $D$ of $P$ such  that $|D\cap  (\{0\}\times \F_q)|=m^2$ and $|D\cap (\{1\}\times \F_q)|=m^2+m$, which is a four-intersection set with parameters $(2m^2+m;\{m^2,m^2+1,m^2+m,m^2+m+1\})$ for $(P,{\mathcal B}_1)$  and 
 with parameters $(2m^2+m;\{m^2-1,m^2,m^2+m-1,m^2+m\})$ for $(P,{\mathcal B}_2)$.  
Then, there exists a biregular Hadamard 
matrix $H'$ of order $n=4(m^2+m+1)$ such that $H'{\bf 1}_{n}$ has entries $2m-2$ and 
$2m+2$.  
\end{proposition}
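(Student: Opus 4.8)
The plan is to mimic the proof of Proposition~\ref{prop:const1}: realize $H'$ as $B^\perp H B$, where $B$ and $B^\perp$ are $(1,-1)$-diagonal matrices negating, respectively, the columns of $H$ indexed by the four-intersection set $D$ and the rows indexed by a suitable union of dual classes of $D$. Explicitly, let ${\bf x}$ be the support vector of $D$ in $P$ (of length $2q$), let $A=I_{2q}-2\,\mathrm{diag}({\bf x})$ be the $2q\times2q$ diagonal matrix equal to $-1$ on $D$ and $+1$ elsewhere, and put $B=\mathrm{diag}(1,1,A)$, leaving untouched the two coordinates that index the $2\times2$ corner of $H$ in \eqref{eq:hada2}. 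For $B^\perp=\mathrm{diag}(1,1,A^\perp)$ I take $A^\perp$ to be $-1$ exactly on those $B\in{\mathcal B}_1$ with $|B\cap D|\in\{m^2+m,m^2+m+1\}$ and on those $B\in{\mathcal B}_2$ with $|B\cap D|\in\{m^2+m-1,m^2+m\}$ (recall that, by the labelling fixed just before the proposition, the blocks of ${\mathcal B}_1$ and ${\mathcal B}_2$ index the two length-$q$ halves of the $2q\times2q$ corner, i.e. the rows of the third and fourth row-blocks of $H$), and $+1$ elsewhere. Since $B$ and $B^\perp$ are $(1,-1)$-diagonal, hence orthogonal, $H'=B^\perp H B$ is again an Hadamard matrix obtained from $H$ by negating rows and columns, so everything reduces to computing $H'{\bf 1}_n$.

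Next I would compute $H'{\bf 1}_n=B^\perp H B{\bf 1}_n$ by expanding $H$ along the coordinate partition $1\mid1\mid q\mid q$ of \eqref{eq:hada2}, starting from $B{\bf 1}_n=(1,1,({\bf 1}_{2q}-2{\bf x})^\tra)^\tra$. The identities $M{\bf 1}_q={\bf 0}_q$ (each row of $M$ has $(q-1)/2$ entries $+1$ and $(q-1)/2$ entries $-1$), $|D\cap(\{0\}\times\F_q)|=m^2$, $|D\cap(\{1\}\times\F_q)|=m^2+m$, and $q=2m^2+2m+1$ give that the first two coordinates of $HB{\bf 1}_n$ equal $2m+2$ and $2m-2$. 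For the two length-$q$ blocks, since the $2q\times2q$ corner of $H$ is the symmetric matrix $N=(N_1\ N_2)$, its top $q$ rows form $N_1^\tra$ and its bottom $q$ rows form $N_2^\tra$, so that corner contributes $N_i^\tra({\bf 1}_{2q}-2{\bf x})$ to the $i$th block. Writing $N_i=2\widetilde N_i-J_{2q,q}$ with $\widetilde N_i$ the incidence matrix of ${\mathcal B}_i$, and using $N_1^\tra{\bf 1}_{2q}={\bf 0}_q$, $N_2^\tra{\bf 1}_{2q}=-2{\bf 1}_q$, $|D|=2m^2+m$, and $(\widetilde N_i^\tra{\bf x})_B=|B\cap D|$, a short computation turns the entry of $HB{\bf 1}_n$ at a block $B\in{\mathcal B}_1$ into $4m^2+2m+2-4|B\cap D|$ and at a block $B\in{\mathcal B}_2$ into $4m^2+2m-2-4|B\cap D|$.

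Feeding in the four permitted intersection numbers for ${\mathcal B}_1$, namely $m^2,m^2+1,m^2+m,m^2+m+1$, produces the values $2m+2,\,2m-2,\,-(2m-2),\,-(2m+2)$, and the four numbers for ${\mathcal B}_2$, namely $m^2-1,m^2,m^2+m-1,m^2+m$, produce the same list; so, together with the first two coordinates, every entry of $HB{\bf 1}_n$ lies in $\{\pm(2m-2),\pm(2m+2)\}$, and the entries equal to $-(2m-2)$ or $-(2m+2)$ sit precisely on the blocks where $A^\perp$ was set to $-1$. Hence left-multiplication by $B^\perp$ replaces each such entry by its negation and fixes the others, so $H'{\bf 1}_n$ has all entries in $\{2m-2,2m+2\}$; both values occur (e.g. as the first two coordinates), so $H'$ is biregular of the asserted type. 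I expect the main obstacle to be bookkeeping rather than any genuine difficulty: one must correctly match the rows of $H'$ with the blocks of ${\mathcal B}_1$ and ${\mathcal B}_2$ via the symmetry of $N$ and the fixed labelling, and then check that the four sign patterns align with the chosen $A^\perp$; the algebra itself is a routine substitution using $q=2m^2+2m+1$.
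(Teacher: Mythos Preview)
Your proposal is correct and follows essentially the same approach as the paper: you and the paper both set $H'=B^\perp H B$ with $B=\mathrm{diag}(1,1,A)$ negating the columns indexed by $D$ and $B^\perp$ negating the rows where the large intersection numbers occur, then compute $H'{\bf 1}_n$ blockwise using $M{\bf 1}_q=0$ and the incidence-matrix identity $(\widetilde N_i^\tra{\bf x})_B=|B\cap D|$. The only cosmetic difference is that you package the two $q$-row blocks via the symmetry of $N$ as $N_i^\tra$, whereas the paper writes out $M_1,M_2,M_3$ explicitly; the arithmetic and the definition of $A^\perp$ coincide.
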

\proof 
Let $H$ be the Hadamard matrix defined  in \eqref{eq:hada2}. Set 
\[
(\alpha_1,\alpha_2,\alpha_3,\alpha_4)=(m^2,m^2+1,m^2+m,m^2+m+1)
\]
and 
\[
(\beta_1,\beta_2,\beta_3,\beta_4)=(m^2-1,m^2,m^2+m-1,m^2+m). 
\]

Let ${\bf x}$ (resp. ${\bf y}$) be the support vector of $D\cap  (\{0\}\times \F_q)$ in $\{0\}\times \F_q$ (resp. $D\cap  (\{1\}\times \F_q)$ in $\{1\}\times \F_q$). 
Furthermore, let 
$A_1$ (resp. $A_1^{\perp}$) be the $q\times q$-diagonal matrix, whose
 rows and columns are labeled by the elements of $\{0\}\times \F_q$ and  diagonal entries are defined by  
 $(A_1)_{i,i}=-1$ if $i\in D\cap (\{0\}\times \F_q)$ and $1$ otherwise 
(resp. $(A_1^\perp)_{i,i}=-1$ if $i\in D_{\alpha_3}^\perp \cup D_{\alpha_4}^\perp$ and $1$ otherwise).  Similarly, 
let 
$A_2$ (resp. $A_2^{\perp}$) be the $q\times q$-diagonal matrix, whose 
rows and columns are labeled by the elements of $\{1\}\times \F_q$ and  diagonal entries are defined by  
$(A_2)_{i,i}=-1$ if $D\cap (\{1\}\times \F_q)$ and $1$ otherwise (resp. $(A_2^\perp)_{i,i}=-1$ if $D_{\beta_3}^\perp \cup D_{\beta_4}^\perp$ and $1$ otherwise). 
Define 
\[
B=\begin{pmatrix} 1 &0 & {\bf 0}_q^\tra& {\bf 0}_q^\tra \\
 0 &1 & {\bf 0}_q^\tra& {\bf 0}_q^\tra \\
 {\bf 0}_q &{\bf 0}_q & A_1& O_q \\
 {\bf 0}_q & {\bf 0}_q &O_q& A_2 \end{pmatrix}, \quad 
B^\perp=\begin{pmatrix} 1 &0 & {\bf 0}_q^\tra& {\bf 0}_q^\tra \\
 0 &1 & {\bf 0}_q^\tra& {\bf 0}_q^\tra \\
 {\bf 0}_q &{\bf 0}_q & A_1^\perp& O_q \\
 {\bf 0}_q & {\bf 0}_q &O_q& A_2^\perp \end{pmatrix}, 
\]
where $O_q$ is the $q\times q$ all-zero matrix. Then,  
$H'=B^\perp H B$ is the desired Hadamard matrix. 

We show that $B^\perp H B{\bf 1}_{2(q+1)}$ has either $2m-2$ or $2m+2$ as its entries. By the definitions of $B$, $B^\perp$ and $H$, we have 
\begin{align*}
B^\perp H B{\bf 1}_{2(q+1)}=&\, 
\begin{pmatrix} 1 &0 & {\bf 0}_q^\tra& {\bf 0}_q^\tra \\
 0 &1 & {\bf 0}_q^\tra& {\bf 0}_q^\tra \\
 {\bf 0}_q &{\bf 0}_q & A_1^\perp& O_q \\
 {\bf 0}_q & {\bf 0}_q &O_q& A_2^\perp \end{pmatrix}
\begin{pmatrix} 
1  &-1 &  {\bf 1}_q^\tra&  {\bf 1}_q^\tra  \\
-1  & -1 & {\bf 1}_q^\tra&-{\bf 1}_q^\tra \\
{\bf 1}_q  &  {\bf 1}_q &M_1 & M_2 \\
{\bf 1}_q  & -{\bf 1}_q&M_2 & M_3  
 \end{pmatrix}
\begin{pmatrix} 1 &0 & {\bf 0}_q^\tra& {\bf 0}_q^\tra \\
 0 &1 & {\bf 0}_q^\tra& {\bf 0}_q^\tra \\
 {\bf 0}_q &{\bf 0}_q & A_1& O_q \\
 {\bf 0}_q & {\bf 0}_q &O_q& A_2 \end{pmatrix}
\begin{pmatrix} 1\\1\\{\bf 1}_q \\ {\bf 1}_q \end{pmatrix}\\
=&\, 
\begin{pmatrix} 
{\bf 1}_q^\tra  A_1 {\bf 1}_q +{\bf 1}_q^\tra  A_2 {\bf 1}_q\\
-2+{\bf 1}_q^\tra  A_1 {\bf 1}_q -{\bf 1}_q^\tra  A_2 {\bf 1}_q\\
A_1^\perp (2{\bf 1}_q+M_1A_1{\bf 1}_q+M_2A_2{\bf 1}_q)\\
A_2^\perp (M_2A_1{\bf 1}_q+M_3A_2{\bf 1}_q) 
 \end{pmatrix}.
\end{align*}
Since $|D\cap  (\{0\}\times \F_q)|=m^2$ and $|D\cap (\{1\}\times \F_q)|=m^2+m$, we have ${\bf 1}_q^\tra  A_1 {\bf 1}_q +{\bf 1}_q^\tra  A_2 {\bf 1}_q=2m+2$ and $-2+{\bf 1}_q^\tra  A_1 {\bf 1}_q -{\bf 1}_q^\tra  A_2 {\bf 1}_q=2m-2$. 
Furthermore, since $D$ is a four-intersection set with parameters $(2m^2+m;\{m^2,m^2+1,m^2+m,m^2+m+1\})$ for $(P,{\mathcal B}_1)$  and 
with parameters $(2m^2+m;\{m^2-1,m^2,m^2+m-1,m^2+m\})$ for $(P,{\mathcal B}_2)$, we have 
\[
\left(\frac{1}{2}((M_1+J_q){\bf x}+(M_2+J_q){\bf y})\right)_i=\begin{cases}
m^2,& \text{ if } i\in D_{\alpha_1}^\perp,\\
m^2+1,& \text{ if } i\in D_{\alpha_2}^\perp,\\
m^2+m,& \text{ if } i\in D_{\alpha_3}^\perp,  \\
m^2+m+1,& \text{ if } i\in D_{\alpha_4}^\perp,
\end{cases}
\]
and
\[
\left(\frac{1}{2}((M_2+J_q){\bf x}+(M_3+J_q){\bf y})\right)_i=\begin{cases}
m^2-1,& \text{ if } i\in D_{\beta_1}^\perp,\\
m^2,& \text{ if } i\in D_{\beta_2}^\perp,\\
m^2+m-1,& \text{ if } i\in D_{\beta_3}^\perp,  \\
m^2+m,& \text{ if } i\in D_{\beta_4}^\perp. 
\end{cases}
\] 
Hence, 
\begin{align*}
\left(A_1^\perp (2{\bf 1}_q+M_1A_1{\bf 1}_q+M_2A_2{\bf 1}_q)\right)_i=&\,
\left(A_1^\perp (2{\bf 1}_q+M_1{\bf 1}_q+M_2{\bf 1}_q-2M_1{\bf x}-2M_2{\bf y})\right)_i\\
=&\,
\begin{cases}
2m+2,& \text{ if } i\in D_{\alpha_1}^\perp \cup D_{\alpha_4}^\perp,\\
2m-2,& \text{ if } i\in D_{\alpha_2}^\perp \cup D_{\alpha_3}^\perp, 
\end{cases}
\end{align*}
and 
\begin{align*}
\left(A_2^\perp (M_2A_1{\bf 1}_q+M_3A_2{\bf 1}_q)\right)_i=&\,
\left(A_2^\perp (M_2{\bf 1}_q+M_3{\bf 1}_q-2M_2{\bf x}-2M_3{\bf y})\right)_i\\
=&\,
\begin{cases}
2m+2,& \text{ if } i\in D_{\beta_1}^\perp \cup D_{\beta_4}^\perp,\\
2m-2,& \text{ if } i\in D_{\beta_2}^\perp \cup D_{\beta_3}^\perp. 
\end{cases}
\end{align*}
This completes the proof of the proposition. 
 \qed
\vspace{0.3cm}

Similarly to Proposition~\ref{prop:const2}, we have the following proposition. 
\begin{proposition}\label{prop:const3}
Let $q=2m^2+2m+1$ be a prime power and $(P,{\mathcal B}_i)$, $i=1,2$, be the block designs defined as in Proposition~\ref{prop:const2}.  Assume that there is a $2m^2+m+1$-subset $D$ of $P$ such  that $|D\cap  (\{0\}\times \F_q)|=m^2$ and $|D\cap (\{1\}\times \F_q)|=m^2+m+1$, which is a four-intersection set  with parameters $(2m^2+m+1;\{m^2,m^2+1,m^2+m+1,m^2+m+2\})$ for $(P,{\mathcal B}_1)$ and 
with parameters $(2m^2+m+1;\{m^2-1,m^2,m^2+m,m^2+m+1\})$ for $(P,{\mathcal B}_2)$.  
Then, there exists a biregular Hadamard 
matrix $H'$ of order $n=4(m^2+m+1)$ such that $H'{\bf 1}_{n}$ has entries $2m$ and 
$2m+4$.  
\end{proposition}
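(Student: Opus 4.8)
The plan is to transcribe the proof of Proposition~\ref{prop:const2}, changing only the cardinality of $D\cap(\{1\}\times\F_q)$ from $m^2+m$ to $m^2+m+1$ and the four intersection numbers accordingly; this shift by one is exactly what raises the two values of $H'{\bf 1}_n$ from $\{2m-2,2m+2\}$ to $\{2m,2m+4\}$. Explicitly, I would put $(\alpha_1,\alpha_2,\alpha_3,\alpha_4)=(m^2,m^2+1,m^2+m+1,m^2+m+2)$ and $(\beta_1,\beta_2,\beta_3,\beta_4)=(m^2-1,m^2,m^2+m,m^2+m+1)$, let ${\bf x}$ and ${\bf y}$ be the support vectors of $D\cap(\{0\}\times\F_q)$ in $\{0\}\times\F_q$ and of $D\cap(\{1\}\times\F_q)$ in $\{1\}\times\F_q$, and form the $(1,-1)$-diagonal matrices $A_1,A_2,A_1^\perp,A_2^\perp$ and the block matrices $B,B^\perp$ exactly as in Proposition~\ref{prop:const2}, now using the duals $D_{\alpha_i}^\perp$ relative to $(P,{\mathcal B}_1)$ and $D_{\beta_i}^\perp$ relative to $(P,{\mathcal B}_2)$ for these $\alpha_i,\beta_i$. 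Since $B$ and $B^\perp$ are $(1,-1)$-diagonal matrices, $H':=B^\perp H B$, with $H$ as in \eqref{eq:hada2}, is again a Hadamard matrix of order $n=2q+2=4(m^2+m+1)$.

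Next I would compute $H'{\bf 1}_n=B^\perp H B{\bf 1}_{2(q+1)}$ by the same block multiplication as in Proposition~\ref{prop:const2}, so that its entries are ${\bf 1}_q^\tra A_1{\bf 1}_q+{\bf 1}_q^\tra A_2{\bf 1}_q$, $-2+{\bf 1}_q^\tra A_1{\bf 1}_q-{\bf 1}_q^\tra A_2{\bf 1}_q$, $A_1^\perp(2{\bf 1}_q+M_1A_1{\bf 1}_q+M_2A_2{\bf 1}_q)$, and $A_2^\perp(M_2A_1{\bf 1}_q+M_3A_2{\bf 1}_q)$. From $|D\cap(\{0\}\times\F_q)|=m^2$ and $|D\cap(\{1\}\times\F_q)|=m^2+m+1$ one gets ${\bf 1}_q^\tra A_1{\bf 1}_q=q-2m^2=2m+1$ and ${\bf 1}_q^\tra A_2{\bf 1}_q=q-2(m^2+m+1)=-1$, so the first two entries both equal $2m$. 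For the last two blocks I would substitute $A_1{\bf 1}_q={\bf 1}_q-2{\bf x}$ and $A_2{\bf 1}_q={\bf 1}_q-2{\bf y}$, use $M{\bf 1}_q={\bf 0}_q$ (each row of $M$ has $(q-1)/2$ entries $1$ and $(q-1)/2$ entries $-1$), hence $M_1{\bf 1}_q={\bf 1}_q$ and $M_2{\bf 1}_q=M_3{\bf 1}_q=-{\bf 1}_q$, and invoke (as in Proposition~\ref{prop:const2}, using that $M$ is symmetric since $q\equiv 1\,(\mod{4})$) the four-intersection hypotheses in the form that $\tfrac12\bigl((M_1+J_q){\bf x}+(M_2+J_q){\bf y}\bigr)$ equals $\alpha_k$ on $D_{\alpha_k}^\perp$ and $\tfrac12\bigl((M_2+J_q){\bf x}+(M_3+J_q){\bf y}\bigr)$ equals $\beta_k$ on $D_{\beta_k}^\perp$. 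Replacing $J_q{\bf x}=m^2{\bf 1}_q$ and $J_q{\bf y}=(m^2+m+1){\bf 1}_q$, a routine calculation of the same shape as in Proposition~\ref{prop:const2} then gives that the third block has entry $2m+4$ on $D_{\alpha_1}^\perp\cup D_{\alpha_4}^\perp$ and $2m$ on $D_{\alpha_2}^\perp\cup D_{\alpha_3}^\perp$, while the fourth block has entry $2m+4$ on $D_{\beta_1}^\perp\cup D_{\beta_4}^\perp$ and $2m$ on $D_{\beta_2}^\perp\cup D_{\beta_3}^\perp$.

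Thus every entry of $H'{\bf 1}_n$ lies in $\{2m,2m+4\}$. Since $m^2<m^2+m+1<(m+1)^2$ for $m\ge1$, the order $n=4(m^2+m+1)$ is not a square, so $H'$ cannot be regular and both values $2m$ and $2m+4$ are actually attained; hence $H'$ is a biregular Hadamard matrix with $H'{\bf 1}_n$ having entries $2m$ and $2m+4$. I would also remark that $H'$ then has maximum excess: by \eqref{eq:fre} the values $2m+4$ and $2m$ occur with frequencies $m^2+m+1$ and $3(m^2+m+1)$, so $E(H')=(m^2+m+1)(2m+4)+3(m^2+m+1)(2m)=4(m^2+m+1)(2m+1)$, which meets the bound of Proposition~\ref{prop:Hadabound} for $n=4(m^2+m+1)$.

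I do not expect a genuine obstacle: the argument is essentially a copy of the proof of Proposition~\ref{prop:const2}. The only delicate point is the bookkeeping — choosing the signs of $A_1^\perp$ and $A_2^\perp$ so that precisely the classes $D_{\alpha_3}^\perp\cup D_{\alpha_4}^\perp$ and $D_{\beta_3}^\perp\cup D_{\beta_4}^\perp$ get negated (this turns the raw negative values $-2m$ and $-2m-4$ into $2m$ and $2m+4$), and correctly converting the four-intersection data, stated via $M_i+J_q$, into the $M_i{\bf x}$- and $M_i{\bf y}$-expressions that appear in $B^\perp HB{\bf 1}_n$, keeping in mind that the second cardinality is now $m^2+m+1$ rather than $m^2+m$.
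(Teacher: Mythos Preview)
Your proposal is correct and follows exactly the approach the paper intends: the authors explicitly write ``We omit the proof of the proposition above since the proof is similar to that of Proposition~\ref{prop:const2},'' and what you have written is precisely that transcription with the updated cardinality $|D\cap(\{1\}\times\F_q)|=m^2+m+1$ and the shifted intersection numbers. Your arithmetic checks out (the first two entries of $H'{\bf 1}_n$ are both $2m$, and the block entries before applying $A_i^\perp$ are $2m+4,\,2m,\,-2m,\,-2m-4$ in each case), and your additional remarks on non-regularity and maximum excess are correct extras not required by the bare statement.
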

We omit the proof of the proposition above since the proof is similar to that of Proposition~\ref{prop:const2}. 
\subsection{Construction of four-intersection sets satisfying the conditions of Propositions~\ref{prop:const2} and \ref{prop:const3}}
Let $q=2m^2+2m+1$ be a prime power. We will follow  notations in Section~\ref{sec:comp} with $e=4$. 
Let $\chi_4$ be a multiplicative character of order $4$ of $\F_{q^2}$, and 
$\eta$ 
the quadratic character of  $\F_{q}$.  It is clear that ${\chi_4}_{|\F_q}=\eta$. 
We first give the following lemma. 
\begin{lemma}\label{lem:Gaussconst}
With notations as above, there  are $\epsilon,\delta\in \{1,-1\}$ such that 
\[
G_q(\eta)G_{q^2}(\chi_4)/q=\epsilon m+\delta (m+1)\zeta_4 \mbox{ or } \epsilon (m+1)+\delta m \zeta_4 
\]
depending on whether $m$ is odd or even. 
\end{lemma}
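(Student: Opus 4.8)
The plan is to evaluate $G_q(\eta)G_{q^2}(\chi_4)/q$ by reducing the Gauss sum over $\F_{q^2}$ to a Jacobi sum over $\F_q$ and then applying Theorem~\ref{lem:facto}. First I would recall that $q=2m^2+2m+1\equiv 1\,(\mod 4)$, so $\F_q$ carries a quadratic character $\eta$ and a quartic character; since $\chi_4$ has order $4$ on $\F_{q^2}$ and $\chi_4{}_{|\F_q}=\eta$, the character $\chi_4$ is \emph{not} the lift of a character of $\F_q$ (its restriction has order $2$, not $4$), so the Davenport--Hasse lifting formula does not apply directly. Instead I would use the Hasse--Davenport product relation (or a direct manipulation) to write $G_{q^2}(\chi_4)$ in terms of $G_q(\eta)$, $G_q(\chi_4')$ and a Jacobi sum over $\F_q$, where $\chi_4'$ is a quartic character of $\F_q$; concretely one gets a relation of the shape $G_{q^2}(\chi_4) = -\,\chi_4'(\text{something})\,G_q(\chi_4')\,G_q(\eta\chi_4')$, hence $G_q(\eta)G_{q^2}(\chi_4)/q = \pm\, J_q(\chi_4',\eta\chi_4')$ up to a root of unity that I will have to pin down only up to sign (the statement allows ambiguous signs of $\epsilon,\delta$).

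Next I would apply Theorem~\ref{lem:facto}: since $q\equiv 1\,(\mod 4)$ is a prime power, $J_q(\eta,\chi)\in\Z[\zeta_4]$ for a quartic character $\chi$, and if moreover $p\equiv 1\,(\mod 4)$ then $J_q(\eta,\chi)=a+b\zeta_4$ for a proper representation $q=a^2+b^2$ with $a$ odd. The arithmetic input is the identity
\[
q=2m^2+2m+1=m^2+(m+1)^2,
\]
which is a proper representation of $q$ as a sum of two squares. Exactly one of $m,m+1$ is odd, so the odd one plays the role of $a$ and the even one the role of $b$; by uniqueness of the representation up to signs and order, $J_q(\eta,\chi)=\epsilon m+\delta(m+1)\zeta_4$ when $m$ is odd and $\epsilon(m+1)+\delta m\zeta_4$ when $m$ is even, for suitable $\epsilon,\delta\in\{1,-1\}$. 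I would then also need to rule out (or absorb into the sign ambiguity) the case $p\equiv 3\,(\mod 4)$ in Theorem~\ref{lem:facto}: there $f$ must be even and $J_q(\eta,\chi)=-(-q)^{f/2}$, which is a rational integer and hence incompatible with $q=m^2+(m+1)^2$ having both $m,m+1$ nonzero — so that branch does not occur here, or if it does one checks directly that the displayed form still holds with the understanding that one of $\epsilon,\delta$ absorbs the discrepancy; the cleanest route is to note that $p\equiv 3\,(\mod 4)$ forces $q\equiv 1\,(\mod 4)$ only with $f$ even, and then $-(-q)^{f/2}=-q^{f/2}$ is real, contradicting that the quartic Jacobi sum is not real unless $m=0$.

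The main obstacle I expect is bookkeeping the root-of-unity factor relating $G_q(\eta)G_{q^2}(\chi_4)/q$ to the Jacobi sum $J_q(\eta,\chi_4')$: one must verify that this factor is indeed a power of $\zeta_4$ (equivalently, a unit in $\Z[\zeta_4]$), because only then can it be absorbed into the ambiguous signs $\epsilon,\delta$ and the multiplication by $\zeta_4$-units permuting $\{a+b\zeta_4,\ -b+a\zeta_4,\ -a-b\zeta_4,\ b-a\zeta_4\}$. This amounts to tracking the powers of $\chi_4$ evaluated at the various field elements appearing in the Hasse--Davenport product relation and using $|G_q(\eta)|^2=|G_{q^2}(\chi_4)|^2/q = q$ together with $|J_q(\eta,\chi_4')|^2=q$ to confirm the normalization. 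Once the modulus is confirmed to be $q^{1/2}\cdot q^{1/2}/q=1$ times a $\Z[\zeta_4]$-unit, the result follows from the explicit evaluation $q=m^2+(m+1)^2$ and Theorem~\ref{lem:facto}. Since the paper explicitly says the signs of $a,b$ are ambiguous and it does not need to care about them, I would not attempt to determine $\epsilon,\delta$ precisely — only that each lies in $\{1,-1\}$ — which is exactly what the lemma asserts.
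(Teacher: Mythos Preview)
Your proposal contains a conceptual error at the outset. You claim that since $\chi_4|_{\F_q}=\eta$ has order $2$, the character $\chi_4$ cannot be the lift of any character of $\F_q$, so Davenport--Hasse lifting does not apply. This is incorrect: if $\chi'$ is a character of $\F_q$ and $\chi=\chi'\circ\Norm_{q^2/q}$ is its lift to $\F_{q^2}$, then for $x\in\F_q^*$ one has $\chi(x)=\chi'(x^{q+1})=\chi'(x^2)=\chi'(x)^2$, so the restriction of the lift is the \emph{square} of the original character, not the character itself. In particular, the lift of a quartic character $\chi_4'$ of $\F_q$ is a quartic character of $\F_{q^2}$ whose restriction to $\F_q$ is $\chi_4'^2=\eta$. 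Since there are exactly $\phi(4)=2$ characters of order $4$ on $\F_{q^2}^*$ and both arise as lifts of the two quartic characters of $\F_q$, the given $\chi_4$ \emph{is} a lift, and Theorem~\ref{thm:lift} applies directly.

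The paper's proof does exactly this: take $\chi_4'$ on $\F_q$ with $\chi_4'(\omega^{q+1})=\chi_4(\omega)$, so $\chi_4$ is its lift and $G_{q^2}(\chi_4)=-G_q(\chi_4')^2$. Multiplying by $G_q(\eta)$ and using $G_q(\chi_4')G_q(\chi_4'^3)=\chi_4'(-1)q$ together with \eqref{eq:Gaussrelation} gives
\[
G_q(\eta)G_{q^2}(\chi_4)=-\chi_4'(-1)\,q\,J_q(\eta,\chi_4'),
\]
after which Theorem~\ref{lem:facto} and the representation $q=m^2+(m+1)^2$ finish the argument. Your detour through a product relation is unnecessary, and the shape you propose, $G_{q^2}(\chi_4)=-\chi_4'(\cdot)\,G_q(\chi_4')G_q(\eta\chi_4')$, is actually false: the right-hand side equals $\pm\chi_4'(-1)q$, a rational integer, whereas $-G_q(\chi_4')^2$ is generally not real. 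Your remark that the branch $p\equiv 3\,(\mod 4)$ of Theorem~\ref{lem:facto} cannot occur here (since then $q=p^f$ admits no representation $a^2+b^2$ with both $a,b\neq 0$, contradicting $q=m^2+(m+1)^2$) is correct and worth making explicit.
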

\proof
Let $\chi_4'$ be the multiplicative character of order $4$ of $\F_q$ such that $\chi_4'(\omega^{q+1})=\chi_4(\omega)$. Then, 
by Theorem~\ref{thm:lift}, we have $
G_{q^2}(\chi_4)=-G_q(\chi_4')^2$. 
By \eqref{eq:Gaussrelation}, we have 
\[
G_q(\eta)G_{q^2}(\chi_4)=-G_q(\eta)G_{q}(\chi_4')^2=-\chi_4'(-1)q\frac{G_q(\eta)G_{q}(\chi_4')}{G_{q}(\chi_4'^3)}=
-\chi_4'(-1)qJ_q(\eta,\chi_4'). 
\]
Finally, by Theorem~\ref{lem:facto}, there are $\epsilon,\delta\in \{1,-1\}$ such that 
$-\chi_4'(-1)J_q(\eta,\chi_4')=\epsilon m+\delta (m+1)\zeta_4$ or $-\chi_4'(-1)J_q(\eta,\chi_4')=\epsilon (m+1)+\delta m \zeta_4$ depending on whether $m$ is odd or even. This completes the proof of the lemma. 
\qed
 \vspace{0.3cm}

Let $\ell$ be an integer not divisible by $q+1$. Fix $\ell$ and $h\in \{0,1,2,3\}$ satisfying the following conditions: 
\begin{equation}\label{eq:cond24RR}
\chi_4(\omega^{\ell})=\zeta_4^{3+h},  \quad
\chi_4(\omega^{\ell q}-\omega^{\ell})=\zeta_4^{\delta (1+2h)}. 
\end{equation}
We will see in Remark~\ref{rem:cond24} that such a pair $(h,\ell)\in \{0,1,2,3\}\times \{0,1,\ldots,q^2-2\}$ exists. 

We first treat the case where $m$ is odd.

\begin{theorem}\label{thm:twoint2}
Let $q=2m^2+2m+1$ be a prime power with $m$ odd. Let $h$ and $\ell$ be integers 
defined as above. Set $(H_0,H_1)=(\{h,h+1\},\{h+1,h+2\})$ or 
$(H_0,H_1)=(\{h+1,h+2\},\{h,h+1\})$ depending on whether
$\epsilon \delta=1$ or $-1$. 
Define
\[
D_{\ell,H_d}=\left\{x\in \F_q\,:\, 1+x\omega^\ell\in \bigcup_{j\in H_d}C_j^{(4,q^2)} \right\}, \quad \, \, d=0,1.  
\]
Then,  it holds that $|D_{\ell,H_0}|=m^2$ and $|D_{\ell,H_1}|=m^2+m$. Furthermore, 
\[\{|D_{\ell,H_0}\cap ((C_0^{(2,q)}\cup \{0\})+s)|+|D_{\ell,H_1}\cap (C_0^{(2,q)}+s)|:s\in \F_q\}=\{m^2,m^2+m+1\} 
\] 
and 
\[
\{|D_{\ell,H_0}\cap (C_0^{(2,q)}+s)|+|D_{\ell,H_1}\cap ((\F_q^\ast\setminus C_0^{(2,q)})+s)|:s\in \F_q\}= \{m^2-1,m^2,m^2+m-1,m^2+m\}. 
\]
\end{theorem}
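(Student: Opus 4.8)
The plan is to follow the template of the proof of Theorem~\ref{thm:twoint}, now with $e=4$, evaluating all relevant cardinalities via Propositions~\ref{prop:sizeHH} and \ref{prop:size:int} and reducing every quantity to a single parameter. First I would record the structural features of the case $q=2m^2+2m+1$ that make $e=4$ tractable. Since $q\equiv 1\,(\mod{4})$, we have $\chi_4(-1)=\eta(-1)=1$, the Gauss sum $G_q(\eta)=\pm\sqrt q$ is real, and $\chi_4(y^q)=\chi_4(y)$ for every $y\in\F_{q^2}^\ast$; consequently $G_{q^2}(\chi_4^3)G_q(\eta)=\overline{G_{q^2}(\chi_4)G_q(\eta)}$, which by Lemma~\ref{lem:Gaussconst} equals $q(\epsilon m-\delta(m+1)\zeta_4)$. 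Moreover, because $(q+1)\nmid\ell$ the element $-\omega^{-\ell}$ does not lie in $\F_q$, so $1+\omega^\ell s\neq 0$ for all $s\in\F_q$; writing $t:=\chi_4(1+\omega^\ell s)$ we then have $t\in\{1,\zeta_4,\zeta_4^2,\zeta_4^3\}$, and since $1+\omega^{q\ell}s=(1+\omega^\ell s)^q$ we get $\chi_4(1+\omega^{q\ell}s)=t$ and $\chi_4(-\omega^{q\ell})=\chi_4(\omega^\ell)=\zeta_4^{3+h}$. These identities are what let me express every cardinality below as a function of $t$ alone.

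Next I would compute $|D_{\ell,H_0}|$ and $|D_{\ell,H_1}|$ from Proposition~\ref{prop:sizeHH} with $e=4$. For $H=\{h,h+1\}$ and for $H=\{h+1,h+2\}$ (both of which are $\equiv\{0,1\}\,(\mod 2)$, so $A_0=A_2=0$) the coefficients $A_1,A_3$ are explicit $\zeta_4$-multiples of $1\pm\zeta_4$; substituting these, together with $G_{q^2}(\chi_4)G_q(\eta)=q(\epsilon m+\delta(m+1)\zeta_4)$, its conjugate, and the prescribed values of $\chi_4(\omega^\ell)$ and $\chi_4(\omega^{\ell q}-\omega^\ell)$ from \eqref{eq:cond24RR}, the imaginary parts cancel and one is left with $m^2$ for one of the two sets and $m^2+m$ for the other. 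The choice of $(H_0,H_1)$ according to $\epsilon\delta=1$ or $-1$ is precisely what assigns $m^2$ to $H_0$ and $m^2+m$ to $H_1$.

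For the intersection numbers I would apply Proposition~\ref{prop:size:int} with $e=4$, once with $H=H_0$ and once with $H=H_1$. Using the reductions of the first paragraph, each $|D_{\ell,H_d}\cap(C_0^{(2,q)}+s)|$ becomes an explicit function of $t$; passing to $C_0^{(2,q)}\cup\{0\}$ adds the indicator of the event $t\in\{\zeta_4^j:j\in H_0\}$ (since $1+\omega^\ell s\in C_j^{(4,q^2)}$ iff $t=\chi_4(\omega^j)$), and passing to $\F_q^\ast\setminus C_0^{(2,q)}$ replaces $|D_{\ell,H_1}\cap((C_0^{(2,q)}\cup\{0\})+s)|$ by $|D_{\ell,H_1}|$ minus it. Assembling the two combined quantities $f(s)=|D_{\ell,H_0}\cap((C_0^{(2,q)}\cup\{0\})+s)|+|D_{\ell,H_1}\cap(C_0^{(2,q)}+s)|$ and $g(s)=|D_{\ell,H_0}\cap(C_0^{(2,q)}+s)|+|D_{\ell,H_1}\cap((\F_q^\ast\setminus C_0^{(2,q)})+s)|$ as functions of $t$, one then checks over the four values $t\in\{1,\zeta_4,\zeta_4^2,\zeta_4^3\}$ that $f(s)\in\{m^2,m^2+m+1\}$ and that $g$ runs bijectively through $\{m^2-1,m^2,m^2+m-1,m^2+m\}$.

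Finally I would argue that every claimed value is attained. For $f$: from $\sum_{s\in\F_q}|D\cap(A+s)|=|D|\,|A|$ we get $\sum_{s}f(s)=m^2\cdot\frac{q+1}{2}+(m^2+m)\cdot\frac{q-1}{2}$, and since $f$ takes only the two values $m^2$ and $m^2+m+1$ this pins the frequencies to $(m+1)^2$ and $m^2$, both positive. For $g$: the case analysis already shows $g$ is a bijection from the four values of $t$ onto $\{m^2-1,m^2,m^2+m-1,m^2+m\}$, so it suffices that $\chi_4$ attains all four fourth roots of unity on the line $\{1+\omega^\ell s:s\in\F_q\}$; by Lemma~\ref{lem:charac_ge} the number of $s$ with $\chi_4(1+\omega^\ell s)$ equal to a prescribed value is $\tfrac q4$ up to an error of at most $\tfrac34\sqrt q$, hence positive for $q>9$, and the only prime power $q=2m^2+2m+1\le 9$ with $m$ odd, namely $q=5$, is checked directly. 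The existence of an admissible pair $(h,\ell)$ satisfying \eqref{eq:cond24RR} is the content of Remark~\ref{rem:cond24}, argued as in Remark~\ref{rem:cond2}. I expect the real work to be the third step: tracking two characters' worth of coefficients for each of $H_0$ and $H_1$, the sign data $(\epsilon,\delta,h)$, and the indicator corrections between $C_0^{(2,q)}$, $C_0^{(2,q)}\cup\{0\}$ and $\F_q^\ast\setminus C_0^{(2,q)}$, and verifying that the dependence on $h$ and $\delta$ collapses so that the output values of $f$ and of $g$ are exactly as stated independently of these choices.
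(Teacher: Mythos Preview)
Your approach is essentially identical to the paper's: the paper evaluates $|D_{\ell,H_d}|$ via Proposition~\ref{prop:sizeHH} and Lemma~\ref{lem:Gaussconst}, then computes $M_{1,s}$ and $M_{2,s}$ via Proposition~\ref{prop:size:int}, substitutes the conditions \eqref{eq:cond24RR}, and reads off the values by a case analysis on $\chi_4(1+\omega^\ell s)$---exactly your plan. Your explicit recording of the identities $\chi_4(-1)=1$ and $\chi_4(y^q)=\chi_4(y)$ (since $q\equiv 1\,(\mod 4)$) is what the paper silently uses when it rewrites $\chi_4^{-i}(1+\omega^{q\ell}s)$ and $\chi_4^{-i}(-\omega^{q\ell})$ as $\chi_4^{-i}(1+\omega^{\ell}s)$ and $\chi_4^{-i}(\omega^{\ell})$ in its expression for $N_s^{(i)}$.

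One point worth noting: the paper's proof stops at the case analysis and does not separately argue that every listed value is attained, whereas you add this (the double-counting for $f$ and the Weil-type bound for the distribution of $\chi_4(1+\omega^\ell s)$). This is a genuine supplement, and your argument is sound; just be aware that Lemma~\ref{lem:charac_ge} as stated applies to characters whose restriction to $\F_q$ is quadratic, so for the $i=2$ term (where $\chi_4^2$ restricts trivially) the same expansion gives a sum of absolute value $1$ rather than $\sqrt q$, which only improves your bound.
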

This theorem implies that $(\{0\}\times D_{\ell,H_0}) \cup (\{1\}\times D_{\ell,H_1})$ satisfies the condition of Proposition~\ref{prop:const2}. Then, the assertion for the case where $m$ is odd in Theorem~\ref{thm:main2} follows. 
\vspace{0.3cm}

{\bf  Proof of Theorem~\ref{thm:twoint2}:}\,  
We first evaluate the sizes of $D_{\ell,H_d}$, $d=0,1$. Put $A_{i,d}=\sum_{j\in H_d}\zeta_e^{-ji}$. 
By Proposition~\ref{prop:sizeHH} and Lemma~\ref{lem:Gaussconst}, 
we have  
\begin{align}\label{eq:sizedouble}
|D_{\ell,H_d}|=&\,\frac{q}{2}+\frac{G_q(\eta)}{4q}\sum_{i=0,1} A_{2i+1,d} G_{q^2}(\chi_4^{2i+1})\chi_4^{-(2i+1)}(\omega^{\ell})\chi_4^{2i+1}(\omega^{\ell q}-\omega^{\ell})\nonumber\\
=&\frac{2m^2+2m+1}{2}+\frac{1}{4}A_{1,d}(\epsilon m+\delta (m+1)\zeta_4)\chi_4^{-1}(\omega^{\ell})\chi_4(\omega^{\ell q}-\omega^{\ell})\nonumber\\
&\quad +\frac{1}{4}A_{3,d}(\epsilon m-\delta (m+1)\zeta_4)\chi_4(\omega^{\ell})\chi_4^{-1}(\omega^{\ell q}-\omega^{\ell}).
\end{align}
Substituting $\chi_4(\omega^{\ell})=\zeta_4^{3+h}$ and 
$\chi_4(\omega^{\ell q}-\omega^{\ell})=\zeta_4^{\delta (1+2h)}$ into \eqref{eq:sizedouble}, it is routine to see that  $(|D_{\ell,H_0}|,|D_{\ell,H_1}|)=(m^2,m^2+m)$. 

Next, we evaluate the following: 
\begin{align*}
&M_{1,s}=|D_{\ell,H_0}\cap ((C_0^{(2,q)}\cup \{0\})+s)|+
|D_{\ell,H_1}\cap (C_0^{(2,q)}+s)|,\\
&M_{2,s}=|D_{\ell,H_0}\cap (C_0^{(2,q)}+s)|+
|D_{\ell,H_1}\cap ((\F_q^\ast \setminus C_0^{(2,q)})+s)|. 
\end{align*}
By Proposition~\ref{prop:size:int} and Lemma~\ref{lem:Gaussconst}, we have 
\begin{align}
M_{1,s}
=\frac{q-1}{2}+\frac{1}{8}\sum_{i=1,3}(A_{i,0}+A_{i,1})N_{s}^{(i)}+\frac{1}{4}\sum_{i=1,3}A_{i,0}\chi_4^i(1+\omega^\ell s) +\frac{1}{2}\label{eq:com14}
\end{align}
and 
\begin{align}
M_{2,s}=&|D_{\ell,H_1}|+\frac{1}{8}\sum_{i=1,3}(A_{i,0}-A_{i,1})N_{s}^{(i)}-\frac{1}{4}\sum_{i=1,3}A_{i,1}\chi_4^i(1+\omega^\ell s) -\frac{1}{2},  \label{eq:com24}
\end{align}
where 
\begin{align*}
N_s^{(i)}=
(\epsilon m+\delta(m+1)\zeta_4^i)
\chi_4^i(\omega^{\ell q}-\omega^\ell)
(\chi_4^{-i}(1+\omega^{\ell} s)+\chi_4^{-i}(\omega^{\ell}))
-(\chi_4^i(1+\omega^\ell s)+\chi_4^i(\omega^\ell)). 
\end{align*}
Substituting $\chi_4(\omega^{\ell})=\zeta_4^{3+h}$ and $\chi_4(\omega^{\ell q}-\omega^{\ell})=\zeta_4^{\delta (1+2h)}$ into 
\eqref{eq:com14} and  \eqref{eq:com24}, it is routine to see that 
\begin{equation*}
M_{1,s}=
\begin{cases}
m^2+m+1,& \text{ if $\epsilon \delta=1$ and $\chi_4(1+\omega^\ell s)\in \{\zeta_4^{h},\zeta_4^{h+1}\}$,} \\
&\text{\quad or $\epsilon \delta=-1$ and 
 $\chi_4(1+\omega^\ell s)\in \{\zeta_4^{h+1},\zeta_4^{h+2}\}$,} 
\\
m^2,& \text{ if $\epsilon \delta=1$ and $\chi_4(1+\omega^\ell s)\in \{\zeta_4^{h+2},\zeta_4^{h+3}\}$,}\\
& \text{\quad or $\epsilon \delta=-1$ and $\chi_4(1+\omega^\ell s)\in \{\zeta_4^{h},\zeta_4^{h+3}\}$,}
\end{cases} 
\end{equation*}
and 
\begin{equation*}
M_{2,s}=
\begin{cases}
m^2-1,& \text{ if $\epsilon \delta=1$ and  $\chi_4(1+\omega^\ell s)=\zeta_4^{h}$,} 
\\
& \text{\quad or $\epsilon \delta=-1$ and $\chi_4(1+\omega^\ell s)=\zeta_4^{h+2}$,} 
\\
m^2+m-1,&  \text{ if $\epsilon \delta=1$ and $\chi_4(1+\omega^\ell s)=\zeta_4^{h+1}$,} 
\\
&  \text{\quad or $\epsilon \delta=-1$ and $\chi_4(1+\omega^\ell s)=\zeta_4^{h+1}$,} 
\\
m^2+m,&  \text{ if $\epsilon \delta=1$ and $\chi_4(1+\omega^\ell s)=\zeta_4^{h+2}$,} 
\\
 &  \text{\quad  or $\epsilon \delta=-1$ and $\chi_4(1+\omega^\ell s)=\zeta_4^{h}$,} 
\\
m^2,&  \text{ if $\epsilon \delta=1$ and $\chi_4(1+\omega^\ell s)=\zeta_4^{h+3}$,} 
\\
&  \text{\quad or $\epsilon \delta=-1$ and $\chi_4(1+\omega^\ell s)=\zeta_4^{h+3}$.} 
\end{cases} 
\end{equation*}
This completes the proof of the theorem. 
 \qed 
\begin{remark}{\em \label{rem:cond24}
In this remark, we show that there exists a pair $(h,\ell)\in \{0,1,2,3\} \times \{0,1,\ldots,q^2-2\}$ satisfying the condition 
\eqref{eq:cond24RR}, i.e., 
the set 
\[
\left\{(h,\ell):(q+1) \not | \, \ell, \chi_4(\omega^{\ell})=\zeta_4^{3+h}, 
\chi_4(\omega^{\ell q}-\omega^{\ell})=\zeta_4^{\delta (1+2h)} \right\}
\]
is nonempty. We assume that $h$ is even. This is valid whenever  
$\ell$ is odd since $ \chi_4(\omega^{\ell})=\zeta_4^{3+h}$. Then,  
the condition $(q+1)\not |\,\ell$ is   satisfied.
Since  $\omega^{\ell q}-\omega^\ell=-\omega^{-\frac{q+1}{2}}\Tr_{q^2/q}(\omega^{\ell+\frac{q+1}{2}})$
is a nonsquare in $\F_{q^2}$, the condition $\chi_4(\omega^{\ell q}-\omega^{\ell})=\zeta_4^{\delta (1+2h)}$ is reformulated as 
\[
\eta(-\Tr_{q^2/q}(\omega^{\ell+\frac{q+1}{2}}))=\zeta_4^{m^2+m+1+\delta }. 
\]
Therefore, it is enough to see that each of the sets 
\[
T_i=\left\{\omega^\ell\in C_{1}^{(2,q^2)}\,:\,\T_{q^2/q}(\omega^{\ell+\frac{q+1}{2}})\in C_i^{(2,q)}\right\}, \quad i=0,1,
\]
is nonempty. 
Similarly to Remark~\ref{rem:cond2}, by Theorem~\ref{thm:Gauss}, we have 
\begin{align*}
|T_i|=&\,-\frac{q-1}{2q}\sum_{x\in C_1^{(2,q^2)}}\psi_{\F_{q^2}}(x\omega^{\frac{q+1}{2}})+\frac{(q-1)(q^2-1)}{4q}\\
=&\,-\frac{q-1}{2q}\left(\frac{-1+G_{q^2}(\eta)}{2}\right)+\frac{(q-1)(q^2-1)}{4q}=\frac{q^2-1}{4}>0. 
\end{align*}
Hence, each $T_i$ is nonempty. 
}\end{remark}

We next treat the case where $m$ is even. 

\begin{theorem}\label{thm:twoint3}
Let $q=2m^2+2m+1$ be a prime power with $m$ even. 
Let $h$ and $\ell$ be integers 
defined in \eqref{eq:cond24RR}. 
Set $(H_0,H_1)=(\{h,h+1\},\{h+1,h+2\})$ or 
$(H_0,H_1)=(\{h+1,h+2\},\{h,h+1\})$ depending on whether
$\epsilon \delta=1$ or $-1$. 
Define
\[
D_{\ell,H_d}=\left\{x\in \F_q\,:\, 1+x\omega^\ell\in \bigcup_{j\in H_d}C_j^{(4,q^2)} \right\}, \quad \, \, d=0,1.  
\]
Then,  it holds that $|D_{\ell,H_0}|=m^2$ and $|D_{\ell,H_1}|=m^2+m+1$. Furthermore,   
\[\{|D_{\ell,H_0}\cap ((C_0^{(2,q)}\cup \{0\})+s)|+|D_{\ell,H_1}\cap (C_0^{(2,q)}+s)|:s\in \F_q\}=\{m^2,m^2+1,m^2+m+1,m^2+m+2\} 
\] 
and 
\[
\{|D_{\ell,H_0}\cap (C_0^{(2,q)}+s)|+|D_{\ell,H_1}\cap ((\F_q^\ast\setminus C_0^{(2,q)})+s)|:s\in \F_q\}= \{m^2,m^2+m\}. 
\]
\end{theorem}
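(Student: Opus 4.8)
The plan is to run the proof of Theorem~\ref{thm:twoint2} line by line, the only structural change being that for even $m$ Lemma~\ref{lem:Gaussconst} supplies $G_q(\eta)G_{q^2}(\chi_4)/q=\epsilon(m+1)+\delta m\zeta_4$ in place of $\epsilon m+\delta(m+1)\zeta_4$. This swaps the roles of $m$ and $m+1$ throughout, and it is precisely what turns the sizes $(m^2,m^2+m)$ of the odd case into the sizes $(m^2,m^2+m+1)$ here, in accordance with the $2m^2+m+1$-subset called for by Proposition~\ref{prop:const3}. Concretely, I would first apply Proposition~\ref{prop:sizeHH} with $e=4$ to obtain, for $d=0,1$,
\begin{align*}
|D_{\ell,H_d}|=\frac{2m^2+2m+1}{2}&+\frac14 A_{1,d}\,(\epsilon(m+1)+\delta m\zeta_4)\,\chi_4^{-1}(\omega^{\ell})\chi_4(\omega^{\ell q}-\omega^{\ell})\\
&+\frac14 A_{3,d}\,(\epsilon(m+1)-\delta m\zeta_4)\,\chi_4(\omega^{\ell})\chi_4^{-1}(\omega^{\ell q}-\omega^{\ell}),
\end{align*}
where $A_{i,d}=\sum_{j\in H_d}\zeta_4^{-ji}$, and then substitute $\chi_4(\omega^{\ell})=\zeta_4^{3+h}$ and $\chi_4(\omega^{\ell q}-\omega^{\ell})=\zeta_4^{\delta(1+2h)}$ from \eqref{eq:cond24RR}; evaluating in $\Z[\zeta_4]$ for the two admissible pairs $(H_0,H_1)$ gives $(|D_{\ell,H_0}|,|D_{\ell,H_1}|)=(m^2,m^2+m+1)$ in both branches $\epsilon\delta=\pm1$.

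For the intersection numbers I would, exactly as in the proof of Theorem~\ref{thm:twoint2}, set
\begin{align*}
M_{1,s}&=|D_{\ell,H_0}\cap((C_0^{(2,q)}\cup\{0\})+s)|+|D_{\ell,H_1}\cap(C_0^{(2,q)}+s)|,\\
M_{2,s}&=|D_{\ell,H_0}\cap(C_0^{(2,q)}+s)|+|D_{\ell,H_1}\cap((\F_q^\ast\setminus C_0^{(2,q)})+s)|,
\end{align*}
expand each of the four cardinalities via Proposition~\ref{prop:size:int} (only the characters $\chi_4,\chi_4^{3}$ survive, since $A_{2,d}=0$), and track the contribution of the diagonal element $s$ itself, i.e.\ the role of the $\{0\}$ adjoined to $C_0^{(2,q)}$ in the first term and removed in the last term, which produces the $\pm\tfrac12$ corrections already visible in \eqref{eq:com14} and \eqref{eq:com24}. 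Substituting the value of $G_{q^2}(\chi_4^i)G_q(\eta)$ from Lemma~\ref{lem:Gaussconst} together with \eqref{eq:cond24RR}, each of $M_{1,s}$ and $M_{2,s}$ becomes a fixed rational plus a $\Z[\zeta_4]$-linear combination of $\chi_4^{\pm1}(1+\omega^{\ell}s)$ and $\chi_4^{\pm1}(\omega^{\ell})$. Sorting by the value $\chi_4(1+\omega^{\ell}s)\in\{1,\zeta_4,\zeta_4^2,\zeta_4^3\}$ (equivalently, by which coset $C_j^{(4,q^2)}$ contains $1+\omega^{\ell}s$; note $1+\omega^{\ell}s\ne0$ for $s\in\F_q$ since $(q+1)\nmid\ell$) then yields $M_{1,s}\in\{m^2,m^2+1,m^2+m+1,m^2+m+2\}$ and $M_{2,s}\in\{m^2,m^2+m\}$. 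These lie inside the sets $\{m^2,m^2+1,m^2+m+1,m^2+m+2\}$ and $\{m^2-1,m^2,m^2+m,m^2+m+1\}$ demanded by Proposition~\ref{prop:const3}; here one uses that the proof of that proposition only needs the intersection numbers to be contained in the stated four-element sets, an unattained value $\alpha_i$ simply giving $D^\perp_{\alpha_i}=\emptyset$.

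I expect the one genuinely delicate point to be the $\Z[\zeta_4]$ bookkeeping: the two branches $\epsilon\delta=1$ and $\epsilon\delta=-1$ must be carried in parallel, one must check that together they account for all four sign patterns of $(\epsilon,\delta)$, and one must confirm that the $\pm\tfrac12$ diagonal corrections never push a value of $M_{1,s}$ or $M_{2,s}$ outside the asserted sets (this is the analogue of the $+\tfrac12$ in \eqref{eq:com14} and the $-\tfrac12$ in \eqref{eq:com24}). Once Theorem~\ref{thm:twoint3} is established, the set $(\{0\}\times D_{\ell,H_0})\cup(\{1\}\times D_{\ell,H_1})$ meets the hypotheses of Proposition~\ref{prop:const3}, and together with the existence of a pair $(h,\ell)$ satisfying \eqref{eq:cond24RR}, which is Remark~\ref{rem:cond24}, this completes the proof of the even-$m$ case of Theorem~\ref{thm:main2}.
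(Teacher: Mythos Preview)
Your proposal is correct and matches the paper's intended approach: the paper explicitly omits the proof of Theorem~\ref{thm:twoint3}, stating only that it is similar to that of Theorem~\ref{thm:twoint2}, and your outline carries out precisely that parallel computation with the roles of $m$ and $m+1$ swapped via the even-$m$ case of Lemma~\ref{lem:Gaussconst}. Your observation that $M_{2,s}$ attains only two of the four values listed in Proposition~\ref{prop:const3}, and that this is harmless because the corresponding duals are simply empty, is a useful clarification that the paper does not spell out.
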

We omit the proof of  Theorem~\ref{thm:twoint3} since 
the proof is similar to that of Theorem~\ref{thm:twoint2}. 
This theorem implies that $(\{0\}\times D_{\ell,H_0}) \cup (\{1\}\times D_{\ell,H_1})$ is a set satisfying the condition of Proposition~\ref{prop:const3}. Then, the assertion for the case where $m$ is even in  Theorem~\ref{thm:main2} follows. 
\section{Construction of regular Hadamard matrices}\label{sec:const3}
In this section, we prove Theorem~\ref{thm:main3tr}. We restate the theorem below.  
\begin{theorem}\label{thm:main3}
Let  $q=2m^2-1$ be a prime power with $m$ odd. 
Let $X_0=\{0\}$, and assume that there are subsets $X_i$, $i=1,2,3,4$, of $\F_{q^2}$ partitioning $\F_{q^2}^\ast$ satisfying the following conditions: 
\begin{enumerate}
\item[(1)] $X_1=\omega^{2m^2}X_3$ and  $X_2=\omega^{2m^2}X_4$; 
\item[(2)] Each $X_i$, $i=1,2,3,4$, is a union of cosets of $C_0^{(4m^2,q^2)}$; 
\item[(3)] $(\F_{q^2},\{R_i\}_{i=0}^4)$ is a four-class association scheme with 
Table~\ref{tab_1} as its first eigenmatrix. Here, for $i=0,1,2,3,4$, $(x,y)\in  R_i$ if and only if $x-y \in X_i$. Furthermore, the sets $Y_i$, $i=0,1,2,3,4$, are defined by $Y_0=\{0\}$, 
$Y_1=\omega^{-m^2\tau}X_1^q$,  $Y_2=\omega^{-m^2\tau}X_2^q$
$Y_3=\omega^{-m^2\tau}X_3^q$, $Y_4=\omega^{-m^2\tau}X_4^q$, 
where $\tau=1$ or $-1$ and $X_i^q:=\{x^q:x\in X_i\}$, $i=1,2,3,4$. 
\end{enumerate}
Then, there exists a regular Hadamard 
matrix of order $n=4m^2$. 
{\footnotesize
\begin{table}[!h]
\begin{center}
\caption{\label{tab_1}
The first eigenmatrix  of $(\F_{q^2},\{R_i\}_{i=0}^4)$:  
$\psi_{\F_{q^2}}(aX_i)$, $a\in \F_{q^2}^\ast$, take four character values depending on whether $a\in Y_i$, $i=1,2,3,4$.
}
\begin{tabular}{|c||c|c|c|c|c|}\hline
   &$X_0$&$X_1$& $X_2$ &$X_3$&$X_4$\\  \hline \hline
$a\in Y_0$&$1$&$m(m^2-1)(m-1)$ &$m(m^2-1)(m+1)$ & $m(m^2-1)(m-1)$ & $m(m^2-1)(m+1)$\\\hline
 $a\in Y_{1}$&$1$  &$\frac{m^2+m-1}{2}-\frac{m}{2}G_q(\eta)$ &  $\frac{-m^2-m}{2}-\frac{m+1}{2}G_q(\eta)$ & $\frac{m^2+m-1}{2}+\frac{m}{2}G_q(\eta)$ &  $\frac{-m^2-m}{2}+\frac{m+1}{2}G_q(\eta)$\\\hline
 $a\in Y_2$&$1$& $\frac{-m^2+m}{2}-\frac{m-1}{2}G_q(\eta)$& $\frac{m^2-m-1}{2}+\frac{m}{2}G_q(\eta)$ &  $\frac{-m^2+m}{2}+\frac{m-1}{2}G_q(\eta)$& $\frac{m^2-m-1}{2}-\frac{m}{2}G_q(\eta)$\\\hline
 $a\in Y_{3}$&$1$& $\frac{m^2+m-1}{2}+\frac{m}{2}G_q(\eta)$ &  $\frac{-m^2-m}{2}+\frac{m+1}{2}G_q(\eta)$ & $\frac{m^2+m-1}{2}-\frac{m}{2}G_q(\eta)$ &  $\frac{-m^2-m}{2}-\frac{m+1}{2}G_q(\eta)$\\\hline
$a\in Y_4$&$1$ &$\frac{-m^2+m}{2}+\frac{m-1}{2}G_q(\eta)$ & $\frac{m^2-m-1}{2}-\frac{m}{2}G_q(\eta)$ & $\frac{-m^2+m}{2}-\frac{m-1}{2}G_q(\eta)$ & $\frac{m^2-m-1}{2}+\frac{m}{2}G_q(\eta)$\\\hline
\end{tabular}
\end{center}
\end{table}}
\end{theorem}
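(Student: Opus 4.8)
The plan is to imitate the template of Sections~\ref{sec:const1} and~\ref{sec:const2}: reduce the regularity of a diagonal‑conjugate of the matrix $H$ of \eqref{eq:hada2} to a combinatorial condition on a subset $D$ of the point set $P=\{0,1\}\times\F_q$, then realise $D$ through the sets $D_{\ell,H}$ of Section~\ref{sec:comp}, and finally use the association‑scheme hypothesis (Table~\ref{tab_1}) to evaluate the relevant cardinalities. Since $m$ is odd, $q=2m^2-1\equiv1\pmod4$, so $H$ is defined and has order $2q+2=4m^2$; note also that $(q+1)/2=m^2$, which is the source of the exponent $m^2$ throughout the statement. Mirroring Proposition~\ref{prop:const2}, I first prove: if there is a subset $D\subseteq P$ with $|D\cap(\{0\}\times\F_q)|=m^2-m-1$ and $|D\cap(\{1\}\times\F_q)|=m^2$ that is a two‑intersection set with parameters $(2m^2-m-1;\{m^2-m,m^2\})$ for $(P,{\mathcal B}_1)$ and with parameters $(2m^2-m-1;\{m^2-m-1,m^2-1\})$ for $(P,{\mathcal B}_2)$, then $H'=B^{\perp}HB$ is a regular Hadamard matrix of order $4m^2$ with row sum $2m$. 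The proof is the block computation of Proposition~\ref{prop:const2}: the two scalar entries of $B^{\perp}HB\,{\bf 1}_n$ become $2q-2|D|$ and $-2+2(|D\cap(\{1\}\times\F_q)|-|D\cap(\{0\}\times\F_q)|)$, both equal to $2m$ for these cardinalities, while every entry of the two vector blocks is $\pm2m$ by the two‑intersection property and is straightened to $2m$ by the $(\pm1)$‑diagonal matrices $A_1^{\perp},A_2^{\perp}$ attached to the duals of $D$.

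For the construction I take $e=4m^2$, so that $e\mid q^2-1$ with $(q^2-1)/e=m^2-1$ and $e/\gcd(e,q+1)=4m^2/2m^2=2$; hence the apparatus of Section~\ref{sec:comp} applies with the quadratic residues of $\F_q$. By condition (2) each $X_i$ is a union of $C_0^{(4m^2,q^2)}$‑cosets, and by condition (1) any ``small plus large'' union $X_i\cup X_j$ (of total size $2m^2$ cosets) is a transversal of the cosets of $C_0^{(2m^2,q^2)}$; that is, its index set $H$ satisfies $H\equiv\{0,1,\dots,2m^2-1\}\pmod{2m^2}$, so it is an admissible index set in Section~\ref{sec:comp}. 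I then set $D=(\{0\}\times D_{\ell,H_0})\cup(\{1\}\times D_{\ell,H_1})$ with, say, $\bigcup_{j\in H_0}C_j^{(4m^2,q^2)}=X_1\cup X_2$ and $\bigcup_{j\in H_1}C_j^{(4m^2,q^2)}=X_1\cup X_4$, where $\ell$ is chosen so that the anchor element $\omega^{q\ell}(\omega^{\ell q}-\omega^{\ell})^{-1}$ lies in the prescribed dual class $Y_1$ (the remaining small‑plus‑large pairs and dual classes being handled symmetrically). The translation of ``the anchor lies in $Y_1$'' into a congruence on $\ell$ is exactly where $Y_1=\omega^{-m^2\tau}X_1^{\,q}$ enters, the relative trace $\Tr_{q^2/q}$ supplying the factor $\omega^{(q+1)/2}=\omega^{m^2}$ and the Frobenius $x\mapsto x^q$ supplying $X_1^{q}$; the existence of a suitable $\ell$ is checked by a Gauss‑sum count, as in Remarks~\ref{rem:cond2} and~\ref{rem:cond24}.

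The verification is the substance of the argument. By Proposition~\ref{prop:size_charac}, $|D_{\ell,H_0}|=\tfrac q2+\tfrac{\chi_e(-1)G_q(\eta)}{2q}\bigl(1+2\,\psi_{\F_{q^2}}(a\,(X_1\cup X_2))\bigr)$ and likewise for $D_{\ell,H_1}$; here $\chi_e(-1)=1$ because $m^2-1$ is even, and by Table~\ref{tab_1} the quantity $\psi_{\F_{q^2}}(a\,(X_1\cup X_2))$ equals $-\tfrac12+\tfrac t2\,G_q(\eta)$ for an integer $t$ depending only on which $Y_j$ contains $a$, so the irrational $G_q(\eta)=\pm\sqrt q$ cancels against the factor $1/\sqrt q$ and one reads off $|D_{\ell,H_0}|=m^2-m-1$, $|D_{\ell,H_1}|=m^2$. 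By Proposition~\ref{prop:size:int2}, the counts $|D_{\ell,H_0}\cap((C_0^{(2,q)}\cup\{0\})+s)|$, $|D_{\ell,H_1}\cap(C_0^{(2,q)}+s)|$ and their companions are again expressed through values $\psi_{\F_{q^2}}(b\,(X_i\cup X_j))$ (plus the cheap indicator terms $\xi_s,\xi_\ell$), where, as $s$ runs over $\F_q$, the argument $b=(\omega^{\ell q}-\omega^{\ell})^{-1}(1+\omega^{q\ell}s)$ traces an affine $\F_q$‑line in $\F_{q^2}$ whose incidences with the dual classes $Y_j$ are counted by a Gauss‑sum argument analogous to \eqref{eq:tisize}. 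Combining the two summands exactly as in the proof of Theorem~\ref{thm:twoint2} gives $M_{1,s}=|D_{\ell,H_0}\cap((C_0^{(2,q)}\cup\{0\})+s)|+|D_{\ell,H_1}\cap(C_0^{(2,q)}+s)|\in\{m^2-m,m^2\}$ and $M_{2,s}=|D_{\ell,H_0}\cap(C_0^{(2,q)}+s)|+|D_{\ell,H_1}\cap((\F_q^{\ast}\setminus C_0^{(2,q)})+s)|\in\{m^2-m-1,m^2-1\}$; since $M_{1,s}=|B\cap D|$ for $B\in{\mathcal B}_1$ and $M_{2,s}=|B\cap D|$ for $B\in{\mathcal B}_2$, the set $D$ meets the hypothesis of the reduction proposition, and Theorem~\ref{thm:main3} follows.

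I expect the main obstacle to be this last step: forcing the linear combinations of entries of Table~\ref{tab_1} down to exactly two values for each of $M_{1,s}$ and $M_{2,s}$ uses the precise Galois‑conjugate shape of the prescribed eigenmatrix -- the row pairs $Y_1\leftrightarrow Y_3$ and $Y_2\leftrightarrow Y_4$ that make the $\sqrt q$‑parts cancel -- together with an exact count of how the shift $s$ distributes among the dual classes; it is the full association‑scheme hypothesis, not merely a bound on individual character values, that makes both of these go through.
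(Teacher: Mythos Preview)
Your strategy is the paper's strategy almost verbatim, and your reduction proposition is correct: with the matrix $H$ of \eqref{eq:hada2}, a set $D\subseteq\{0,1\}\times\F_q$ with $|D_0|=m^2-m-1$, $|D_1|=m^2$ and the two two-intersection properties you state would indeed yield a regular Hadamard matrix with row sum $2m$. The difficulty is that no choice of anchor and pair $(H_0,H_1)$ of the form $X_i\cup X_j$ realises these numbers.

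Concretely, the only way to obtain $|D_{\ell,H_0}|=m^2-m-1$ from Proposition~\ref{prop:size_charac} and Table~\ref{tab_1} is to place the anchor $\omega^{q\ell}(\omega^{\ell q}-\omega^\ell)^{-1}$ in $Y_1$ (equivalently $\omega^\ell\in X_1$) with $H_0=X_1\cup X_2$, or the $Y_3$-symmetric version. Carrying that choice through Proposition~\ref{prop:size:int2} exactly as you describe, one finds
\[
M_{1,s}=
\begin{cases}
m^2-m-1,&1+\omega^\ell s\in X_1,\\
m^2-m,&1+\omega^\ell s\in X_2,\\
m^2-1,&1+\omega^\ell s\in X_3,\\
m^2-2,&1+\omega^\ell s\in X_4,
\end{cases}
\]
so $M_{1,s}$ takes \emph{four} values, not two, and the reduction hypothesis fails. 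The phenomenon you anticipated in your last paragraph is exactly what happens: when $\omega^\ell$ lies in a small class, the $\xi_\ell$-terms both equal $1$, and this extra $-1$ destroys the two-value collapse that Table~\ref{tab_1} would otherwise provide.

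The paper repairs this with one extra move: it first negates the second row and the second column of $H$ (obtaining \eqref{eq:hada3}), which has the effect of replacing the second scalar constraint $-2+2(|D_1|-|D_0|)=2m$ by the symmetric-design constraint $|D|=2m^2-m$ and, crucially, of making the two intersection conditions coincide as a single condition $\{m^2-m,m^2\}$ (Proposition~\ref{prop:const5}). One can then take $\omega^\ell\in X_2\cup X_4$ (a \emph{large} class), set $(S_0,S_1)=(X_1\cup X_4,\,X_1\cup X_2)$, and the same substitution from Table~\ref{tab_1} now gives $|D_{\ell,S_0}|=m^2-m$, $|D_{\ell,S_1}|=m^2$, and both $M_{1,s}$ and $M_{2,s}$ collapse to $\{m^2-m,m^2\}$ (Theorem~\ref{thm:twoint4}). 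In short, your plan is right up to this one preliminary negation; without it the arithmetic of Table~\ref{tab_1} does not close.
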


\begin{remark}{\em 
We remark that $W_1=X_1\cup X_3$ (resp. $W_2=X_2\cup X_4$) is a union of cosets of $\F_{q}^\ast$ and it takes exactly two nontrivial  
character values $m^2+m-1$ and $-m^2+m$ (resp. $m^2-m-1$ and $-m^2-m$). Equivalently,  $(\F_{q^2},\{R_i'\}_{i=0}^2)$ is a two-class association scheme, where $R_0':=\{(x,x)\,:\,x\in \F_{q^2}\}$ and 
$R'_i$, $i=1,2$, are defined by $(x,y)\in R_{i}'$ if and only if $x-y\in W_i$. 
On the other hand, it is known that any union of cosets of $\F_{q}^\ast$ in $\F_{q^2}^\ast$  takes exactly two nontrivial character values~\cite[Theorem~2]{BWX}, and there exist such subsets $W_1$ and $W_2$. Then, by Proposition~\ref{prop:BM}, $(\F_{q^2},\{R_i\}_{i=0}^4)$ is a fission scheme of a known two-class association scheme.  
}
\end{remark}
\subsection{Construction of regular Hadamard matrices from quadratic residues of $\F_q$ with $q\equiv 1\,(\mod{4})$}
We will modify the construction given in Proposition~\ref{prop:const2}. Let $M_1,M_2,M_3$ be matrices defined as in Subsection~\ref{subsec:const2}. 

Define
\begin{equation}\label{eq:hada3}
H=\begin{pmatrix} 
1  &1 &  {\bf 1}_q^\tra&  {\bf 1}_q^\tra  \\
1  & -1 & -{\bf 1}_q^\tra&{\bf 1}_q^\tra \\
{\bf 1}_q  &  -{\bf 1}_q &M_1 & M_2 \\
{\bf 1}_q  & {\bf 1}_q&M_2 & M_3  
 \end{pmatrix}, 
\end{equation}
which is the matrix obtained by 
negating the second row and the second  column of the matrix defined in \eqref{eq:hada2}. 

Consider the symmetric submatrix 
\begin{equation}\label{eq:hada4}
N':=\begin{pmatrix} 
 -1 & -{\bf 1}_q^\tra & {\bf 1}_q^\tra \\
 -{\bf 1}_q & M_1& M_2 \\
{\bf 1}_q& M_2 & M_3  
 \end{pmatrix} 
\end{equation}
of $H$. 
Note that 
$(N'+J_{2q+1})/2$ is the incidence matrix of a symmetric $2$-design $(P,{\mathcal B})$ with parameters $(v,k,\lambda)=(2q+1,q,(q-1)/2)$. 

\begin{proposition}\label{prop:const5}
Let $q=2m^2-1$ be a prime power with $m$ odd and $(P,{\mathcal B})$ be the block design defined as above.  Assume that there is a $2m^2-m$-subset $D$ of $P$, 
which is a two-intersection set with parameters $(2m^2-m;\{m^2-m,m^2\})$ for $(P,{\mathcal B})$.  
Then, there exists a regular Hadamard 
matrix of order $n=4m^2$.  
\end{proposition}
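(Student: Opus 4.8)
The plan is to imitate the transformations used in Propositions~\ref{prop:const1} and \ref{prop:const2}: negate the columns of $H$ corresponding to the points of $D$, negate the rows of $H$ corresponding to one of the two dual classes of $D$, and then check directly that the resulting Hadamard matrix has all row sums equal to $2m$ (the value forced by the order $n=4m^2$). First I would record the shape of $H$. Since $m$ is odd, $q=2m^2-1\equiv 1\,(\mod{4})$, so the matrix $H$ of \eqref{eq:hada3} is an Hadamard matrix of order $n=2q+2=4m^2$, and its first row and first column consist entirely of $1$'s; deleting them leaves the symmetric matrix $N'$ of \eqref{eq:hada4}. Writing $M=(N'+J_{2q+1})/2$ for the incidence matrix of the symmetric $2$-design $(P,{\mathcal B})$ with parameters $(2q+1,q,(q-1)/2)$ (so $N'=2M-J_{2q+1}$), and using $r=k=q$ for a symmetric design, we get $M{\bf 1}_{2q+1}=q{\bf 1}_{2q+1}$ and hence $N'{\bf 1}_{2q+1}=(2q-(2q+1)){\bf 1}_{2q+1}=-{\bf 1}_{2q+1}$.

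Next I would set up the sign matrices. Let ${\bf x}\in\{0,1\}^{2q+1}$ be the support vector of $D$ in $P$. By the two-intersection hypothesis, the $i$-th entry of $M{\bf x}$ equals $|B_i\cap D|\in\{m^2-m,m^2\}$, where $B_i$ denotes the $i$-th block of ${\mathcal B}$; thus ${\mathcal B}$ is partitioned into the dual classes $D_{m^2}^\perp$ and $D_{m^2-m}^\perp$ of \eqref{eq:defdual}. Let $A$ (resp.\ $A^\perp$) be the $(2q+1)\times(2q+1)$ $(1,-1)$-diagonal matrix with $A_{i,i}=-1$ exactly when $i\in D$ (resp.\ $A^\perp_{i,i}=-1$ exactly when $B_i\in D_{m^2}^\perp$). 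Set
\[
B=\begin{pmatrix}1 & {\bf 0}_{2q+1}^\tra\\ {\bf 0}_{2q+1} & A\end{pmatrix},\qquad B^\perp=\begin{pmatrix}1 & {\bf 0}_{2q+1}^\tra\\ {\bf 0}_{2q+1} & A^\perp\end{pmatrix},\qquad H'=B^\perp HB.
\]
Since $B$ and $B^\perp$ are $(1,-1)$-diagonal matrices, $H'$ is obtained from $H$ by negating rows and columns, hence is again an Hadamard matrix of order $4m^2$.

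It then remains to compute $H'{\bf 1}_n$. Using the bordered shape of $H$,
\[
H'{\bf 1}_n=B^\perp\begin{pmatrix}1 & {\bf 1}_{2q+1}^\tra\\ {\bf 1}_{2q+1} & N'\end{pmatrix}\begin{pmatrix}1\\ A{\bf 1}_{2q+1}\end{pmatrix}=\begin{pmatrix}1+{\bf 1}_{2q+1}^\tra A{\bf 1}_{2q+1}\\ A^\perp\big({\bf 1}_{2q+1}+N'A{\bf 1}_{2q+1}\big)\end{pmatrix}.
\]
The first coordinate equals $1+\big((2q+1)-2|D|\big)=1+(4m^2-1)-2(2m^2-m)=2m$. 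For the remaining block, $A{\bf 1}_{2q+1}={\bf 1}_{2q+1}-2{\bf x}$, so $N'{\bf 1}_{2q+1}=-{\bf 1}_{2q+1}$ and $N'=2M-J_{2q+1}$ give
\[
{\bf 1}_{2q+1}+N'A{\bf 1}_{2q+1}=-2N'{\bf x}=-2\big(2M{\bf x}-(2m^2-m){\bf 1}_{2q+1}\big),
\]
whose $i$-th coordinate is $-2\big(2|B_i\cap D|-(2m^2-m)\big)$, namely $-2m$ if $B_i\in D_{m^2}^\perp$ and $2m$ if $B_i\in D_{m^2-m}^\perp$. Left-multiplication by $A^\perp$ flips the sign of exactly the coordinates indexed by $D_{m^2}^\perp$, so every coordinate becomes $2m$. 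Therefore $H'{\bf 1}_n=2m\,{\bf 1}_n$, i.e.\ $H'$ is a regular Hadamard matrix of order $4m^2$, as required.

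I do not expect a genuine obstacle in this proposition: it is a direct analogue of Propositions~\ref{prop:const1} and \ref{prop:const2}, and the only point needing care is the sign bookkeeping together with checking that the constants close up, which they do precisely because the parameters $(2m^2-m;\{m^2-m,m^2\})$ of the hypothesized two-intersection set are tuned to the parameters $(2q+1,q,(q-1)/2)$ of the Hadamard design $(P,{\mathcal B})$. The substantial work behind Theorem~\ref{thm:main3} lies elsewhere, namely in exhibiting such a two-intersection set $D$, which is where the four-class translation association scheme on $\F_{q^2}$ with first eigenmatrix given by Table~\ref{tab_1} enters.
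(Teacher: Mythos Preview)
Your proof is correct and follows essentially the same approach as the paper's own proof: you define the same diagonal sign matrices $A$ and $A^\perp$ (negating columns indexed by $D$ and rows indexed by $D_{m^2}^\perp$), form $H'=B^\perp HB$, and verify $H'{\bf 1}_n=2m\,{\bf 1}_n$ by computing the bordered block product exactly as the paper does. Your write-up is slightly more explicit in recording $N'{\bf 1}_{2q+1}=-{\bf 1}_{2q+1}$ and $N'=2M-J_{2q+1}$ before substituting, but the argument is the same.
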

\proof 
Let $H$ be the Hadamard matrix defined  in \eqref{eq:hada3}. Set 
\[
(\alpha,\beta)=(m^2-m,m^2). 
\]
Let ${\bf x}$ be the support vector of $D$ in $P$. 
Furthermore, let 
$A$ (resp. $A^{\perp}$) be the $(2q+1)\times (2q+1)$-diagonal matrix, whose 
entries are defined by $A_{i,i}=-1$ if $i\in D$ and $1$ otherwise (resp. $A^\perp_{i,i}=-1$ if $D_{\beta}^\perp$ and  $1$ otherwise). 
Define 
\[
B=\begin{pmatrix} 1 & {\bf 0}_q^\tra \\
  {\bf 0}_q & A \end{pmatrix}, \quad 
B^\perp=\begin{pmatrix} 1 & {\bf 0}_q^\tra \\
  {\bf 0}_q & A^\perp \end{pmatrix}.  
\]
Then,  
$H'=B^\perp H B$ is the desired Hadamard matrix. 

We show that $B^\perp H B{\bf 1}_{2(q+1)}=2m {\bf 1}_{2(q+1)}$. By the definitions of $B$, $B^\perp$ and $H$, we have 
\begin{align*}
B^\perp H B{\bf 1}_{2(q+1)}=&\, 
\begin{pmatrix} 1 & {\bf 0}_{2q+1}^\tra \\
  {\bf 0}_{2+1} & A^\perp \end{pmatrix}
\begin{pmatrix} 1 & {\bf 1}_{2q+1}^\tra \\
  {\bf 1}_{2q+1} & N' \end{pmatrix}
\begin{pmatrix} 1 & {\bf 0}_{2q+1}^\tra \\
  {\bf 0}_{2q+1} & A \end{pmatrix}
\begin{pmatrix} 1 \\ {\bf 1}_{2q+1} \end{pmatrix}\\
=&\, 
\begin{pmatrix} 
1+{\bf 1}_{2q+1}^\tra  A {\bf 1}_{2q+1}\\
A^\perp({\bf 1}_{2q+1}+N'A{\bf 1}_{2q+1})
 \end{pmatrix}.
\end{align*}
Since $D=|2m^2-m|$, we have $1+{\bf 1}_{2q+1}^\tra  A {\bf 1}_{2q+1}=2m$. 
Furthermore, since $D$ is a two-intersection set with parameters $(2m^2-m;\{m^2-m,m^2\})$ for $(P,{\mathcal B})$, we have 
\[
\left(\frac{1}{2}(N'+J_{2q+1}){\bf x}\right)_i=\begin{cases}
m^2-m,& \text{ if } i\in D_{\alpha}^\perp,\\
m^2,& \text{ if } i\in D_{\beta}^\perp. 
\end{cases}
\]
Hence, 
\begin{align*}
\left(A^\perp ({\bf 1}_{2q+1}+N'A{\bf 1}_{2q+1})\right)_i=&\,
\left(A^\perp ({\bf 1}_{2q+1}+N'{\bf 1}_{2q+1}-2N'{\bf x})\right)_i
=2m. 
\end{align*}
This completes the proof of the proposition. 
 \qed

\subsection{Construction of two-intersection sets satisfying the conditions of Proposition~\ref{prop:const5}}
We  construct  two-intersection sets  satisfying the conditions of Proposition~\ref{prop:const5} under the assumption of the existence of 
a four-class association scheme. 

With notations as in Theorem~\ref{thm:main3}, assume that there is an integer $\ell$ not divisible by $q+1$ such that 
\begin{equation}\label{eq:lext}
 \omega^{\ell}\in X_2\cup X_4,\quad \omega^{\ell q}-\omega^\ell\in C_{\tau m^2}^{(4m^2,q^2)}. 
\end{equation}
We will see in Remark~\ref{rem:cond25} that such an element $\ell \in \{0,1,\ldots,q^2-2\}$ exists. 
Then, by the definition of $Y_i$, we have $\omega^{q\ell}(\omega^{\ell q}-\omega^\ell)^{-1}\in Y_2$ or $Y_4$ depending on whether $\omega^{\ell}\in X_2$ or $X_4$. Furthermore,  
$(\omega^{\ell q}-\omega^\ell)^{-1}(1+\omega^{q\ell}s)\in Y_i$ if and only if 
$1+\omega^\ell s \in X_i$ for every $i=1,2,3,4$. 
\begin{theorem}\label{thm:twoint4}
Let $q=2m^2-1$ be a prime power with $m$ odd. Assume that there are subsets $X_i$, $i=1,2,3,4$, satisfying the conditions of Theorem~\ref{thm:main3}. 
Let $(S_0,S_1)=(X_1\cup X_4,X_1\cup X_2)$ or 
$(S_0,S_1)=(X_2\cup X_3,X_3\cup X_4)$ depending on whether $\omega^\ell\in X_2$ or $\omega^\ell\in X_4$. 
Define
\[
D_{\ell,S_d}=\left\{x\in \F_q\,:\, 1+x\omega^\ell\in S_d
 \right\}, \quad \, \, d=0,1.  
\]
Then,  it holds that $|D_{\ell,S_0}|=m^2-m$ and $|D_{\ell,S_1}|=m^2$. Furthermore, 
\[\{|D_{\ell,S_0}\cap ((C_0^{(2,q)}\cup \{0\})+s)|+|D_{\ell,S_1}\cap (C_0^{(2,q)}+s)|:s\in \F_q\}=\{m^2-m,m^2\}
\] 
and 
\[
\{|D_{\ell,S_0}\cap (C_0^{(2,q)}+s)|+|D_{\ell,S_1}\cap ((\F_q^\ast\setminus C_0^{(2,q)})+s)|:s\in \F_q\}=\{m^2-m,m^2\}. 
\]
\end{theorem}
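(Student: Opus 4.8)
The plan is to run the character-sum machinery of Section~\ref{sec:comp} with $e=4m^2$ and then substitute the entries of Table~\ref{tab_1}. First I would check that $S_0$ and $S_1$ are admissible index sets for that machinery. By condition~(2) each $X_i$, hence each $S_d$, is a union of cosets of $C_0^{(4m^2,q^2)}$; writing $I_i\subseteq\Z/4m^2\Z$ for the coset indices of $X_i$, condition~(1) gives $I_1=I_3+2m^2$ and $I_2=I_4+2m^2$, so $I_1\cup I_4$ (respectively $I_1\cup I_2$) and its translate by $2m^2$ partition $\Z/4m^2\Z$. Thus each $S_d$ equals $\bigcup_{i\in H_d}C_i^{(4m^2,q^2)}$ for a set $H_d$ with $H_d\equiv\{0,1,\dots,2m^2-1\}\pmod{2m^2}$, which is exactly the hypothesis needed in Propositions~\ref{prop:sizeHH}--\ref{prop:size:int2}. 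Two elementary facts will be used repeatedly: since $m$ is odd, $q=2m^2-1\equiv1\pmod4$, so by Theorem~\ref{thm:Gauss} $G_q(\eta)^2=q$; and $\chi_{4m^2}(-1)=\zeta_{4m^2}^{(q^2-1)/2}=(-1)^{m^2-1}=1$. Together these make the prefactors in Propositions~\ref{prop:size_charac} and \ref{prop:size:int2} real and easy to collapse.

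Next I would compute the cardinalities. By \eqref{eq:lext} and the definition of the $Y_i$, the element $a:=\omega^{q\ell}(\omega^{\ell q}-\omega^\ell)^{-1}$ lies in $Y_2$ if $\omega^\ell\in X_2$ and in $Y_4$ if $\omega^\ell\in X_4$. Substituting the corresponding row of Table~\ref{tab_1} into Proposition~\ref{prop:size_charac}, the first-power $G_q(\eta)$ terms cancel against the $\chi_{4m^2}(-1)G_q(\eta)/(2q)$ term and the remaining $G_q(\eta)^2=q$ term yields, after a short simplification, $|D_{\ell,S_0}|=m^2-m$ and $|D_{\ell,S_1}|=m^2$. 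For the intersection numbers I would apply Proposition~\ref{prop:size:int2} to each $D_{\ell,S_d}$. The crucial point is that for $s$ with $1+\omega^\ell s\neq0$ one has $(\omega^{\ell q}-\omega^\ell)^{-1}(1+\omega^{q\ell}s)\in Y_i\iff1+\omega^\ell s\in X_i$ (using $(1+\omega^\ell s)^q=1+\omega^{q\ell}s$), while $-(\omega^{\ell q}-\omega^\ell)^{-1}\omega^{q\ell}$ lies in the same $Y_i$ as $a$ because $-1\in C_0^{(4m^2,q^2)}$. Hence $N_s^{(d)}:=|D_{\ell,S_d}\cap(C_0^{(2,q)}+s)|$ is piecewise constant in $s$, taking one value for each of the five possibilities $1+\omega^\ell s\in X_1,X_2,X_3,X_4$ or $1+\omega^\ell s=0$; in each case one substitutes the appropriate row of Table~\ref{tab_1} into Proposition~\ref{prop:size:int2} and simplifies with $G_q(\eta)^2=q$. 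One also records that $\omega^\ell\in X_2\cup X_4$ forces $\omega^\ell\notin S_0$ and $\omega^\ell\in S_1$, i.e. the parameter $\xi_\ell$ of Proposition~\ref{prop:size:int2} equals $0$ for $S_0$ and $1$ for $S_1$.

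Finally I would assemble the two target sums. Using the partition $\F_q=\{s\}\sqcup(C_0^{(2,q)}+s)\sqcup((\F_q^\ast\setminus C_0^{(2,q)})+s)$ and $|D_{\ell,S_1}|=m^2$,
\[
M_{1,s}=N_s^{(0)}+\xi_s^{(0)}+N_s^{(1)},\qquad M_{2,s}=m^2+N_s^{(0)}-N_s^{(1)}-\xi_s^{(1)},
\]
where $\xi_s^{(d)}=1$ if $1+\omega^\ell s\in S_d$ and $0$ otherwise. Running these through the five cases above should show $M_{1,s},M_{2,s}\in\{m^2-m,m^2\}$ for every $s\in\F_q$. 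That both values are actually attained follows either from the case analysis together with the fact that $X_1,X_2,X_3,X_4$ are nonempty, or from the bookkeeping identity $\sum_{s\in\F_q}M_{1,s}=(2m^2-m)(m^2-1)+(m^2-m)$, which forces the value $m^2$ to occur exactly $m^2-1$ times (and likewise for $M_{2,s}$).

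I expect the only real difficulty to be bookkeeping rather than ideas: tracking the $\pm\sqrt q$ contributions of $G_q(\eta)$ through Propositions~\ref{prop:size_charac} and \ref{prop:size:int2}, keeping the two sub-cases $\omega^\ell\in X_2$ and $\omega^\ell\in X_4$ separate, handling the degenerate $s$ with $1+\omega^\ell s=0$ (where $\psi_{\F_{q^2}}(0\cdot S_d)=|S_d|=2m^2(m^2-1)$ replaces a genuine character value), and remembering that $D_{\ell,S_0}\cap((C_0^{(2,q)}\cup\{0\})+s)$ contains the extra point $s$ precisely when $1+\omega^\ell s\in S_0$, which is the origin of the $\xi_s^{(0)}$ term in $M_{1,s}$.
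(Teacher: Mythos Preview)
Your proposal is correct and follows essentially the same route as the paper: apply Proposition~\ref{prop:size_charac} for the sizes $|D_{\ell,S_d}|$ and Proposition~\ref{prop:size:int2} for the intersection counts, use the key observation that $(\omega^{\ell q}-\omega^\ell)^{-1}(1+\omega^{q\ell}s)\in Y_i$ iff $1+\omega^\ell s\in X_i$, substitute the entries of Table~\ref{tab_1}, and collapse with $G_q(\eta)^2=q$. Your explicit check that each index set $H_d$ satisfies $H_d\equiv\{0,\dots,2m^2-1\}\pmod{2m^2}$ is a useful detail the paper leaves implicit; conversely, the ``degenerate'' case $1+\omega^\ell s=0$ you flag never actually arises, since $(q+1)\nmid\ell$ forces $\omega^\ell\notin\F_q$.
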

We use the same labeling of rows and columns of the matrix  \[
N=\begin{pmatrix} M_1 & M_2 \\ 
M_2 & M_3   \end{pmatrix},  
\]
which is defined in Subsection~\ref{subsec:const2}.  
The theorem above implies that $(\{0\}\times D_{\ell,S_0}) \cup (\{1\}\times D_{\ell,S_1})\subset P$ is the desired two-intersection set for the block design $(P,{\mathcal B})$ in Proposition~\ref{prop:const5}. (So, we do not negate the second row and column of $H$.) Theorem~\ref{thm:main3} then follows.  
\vspace{0.3cm}

{\bf  Proof of Theorem~\ref{thm:twoint4}:}\,  
We consider the case where $\omega^\ell \in X_2$. 
Put $S_0=X_1\cup X_4$ and $S_1=X_1\cup X_2$. In this case, 
$\omega^{q\ell}(\omega^{\ell q}-\omega^{\ell})^{-1}\in Y_2$. 

We first evaluate the sizes of $D_{\ell,S_d}$, $d=0,1$. 
By Proposition~\ref{prop:size_charac},  
\begin{equation}\label{eq:substi}
|D_{\ell,S_d}|=\frac{2m^2-1}{2}+\frac{G_q(\eta)}{2q}(1+2\psi_{\F_{q^2}}(\omega^{q\ell}(\omega^{\ell q}-\omega^{\ell})^{-1}S_d)).  
\end{equation}
By substituting the character values of $S_0$ and $S_1$ 
listed in Table~\ref{tab_1} into \eqref{eq:substi}, we have 
$(|D_{\ell,S_0}|,|D_{\ell,S_1}|)=(m^2-m,m^2)$. Here, we used the fact that $G_q(\eta)^2=q$. 

Next, we evaluate the following: 
\begin{align*}
&M_{1,s}=|D_{\ell,S_0}\cap ((C_0^{(2,q)}\cup \{0\})+s)|+
|D_{\ell,S_1}\cap (C_0^{(2,q)}+s)|,\\
&M_{2,s}=|D_{\ell,S_0}\cap (C_0^{(2,q)}+s)|+
|D_{\ell,S_1}\cap ((\F_q^\ast \setminus C_0^{(2,q)})+s)|. 
\end{align*}
By Proposition~\ref{prop:size:int2}, we have 
\begin{align}
M_{1,s}
=&\,\frac{G_q(\eta)}{2q}\left(2+\psi_{\F_{q^2}}((\omega^{\ell q}-\omega^\ell)^{-1}(1+\omega^{q\ell} s)(S_0\cup S_1))\right.\nonumber\\
&\hspace{1.0cm}\left.+\psi_{\F_{q^2}}(-(\omega^{\ell q}-\omega^\ell)^{-1}\omega^{q\ell}(S_0\cup S_1))\right)
+\frac{q+1}{2}+\frac{\xi_s-\xi_{\ell}-\xi_s'-\xi_{\ell}'}{2}, 
\label{eq:com14R}
\end{align}
where $\xi_s$, $\xi_s'$, $\xi_\ell$, and $\xi_\ell'$ are defined 
as
\[
\xi_s=\begin{cases}
1,& \text{ if } 1+\omega^\ell s\in S_0,\\
0,& \text{ otherwise, }
\end{cases} \quad
\xi_s'=\begin{cases}
1,& \text{ if } 1+\omega^\ell s\in S_1,\\
0,& \text{ otherwise, }
\end{cases}
\]
\[
\xi_\ell=\begin{cases}
1,& \text{ if } \omega^\ell \in S_0,\\
0,& \text{ otherwise, }
\end{cases} \quad
\xi_\ell'=\begin{cases}
1,& \text{ if } \omega^\ell \in S_1,\\
0,& \text{ otherwise. }
\end{cases}
\] 
By Table~\ref{tab_1}, we have 
\begin{align*}
1+\psi_{\F_{q^2}}((\omega^{\ell q}-\omega^\ell)^{-1}(1+\omega^{q\ell} s)(S_0\cup S_1))
= \begin{cases}
-mG_{q}(\eta),& \text{ if } 1+\omega^{\ell} s\in X_1,\\
-(m-1) G_{q}(\eta),& \text{ if } 1+\omega^{\ell} s\in X_2, \\
m G_{q}(\eta),& \text{ if }1+\omega^{\ell} s\in X_3,\\
(m-1) G_{q}(\eta),& \text{ if } 1+\omega^{\ell} s\in X_4, 
\end{cases}
\end{align*}
and 
\[
1+\psi_{\F_{q^2}}(-(\omega^{\ell q}-\omega^\ell)^{-1}\omega^{q\ell}(S_0\cup S_1))=-(m-1)G_q(\eta). 
\]
Furthermore, 
\[
\frac{\xi_s-\xi_{\ell}-\xi_s'-\xi_{\ell}'}{2}=\begin{cases}
-\frac{1}{2},& \text{ if } 1+\omega^{\ell} s\in X_1,\\
-1,& \text{ if } 1+\omega^{\ell} s\in X_2, \\
-\frac{1}{2},& \text{ if }1+\omega^{\ell} s\in X_3,\\
0,& \text{ if } 1+\omega^{\ell} s\in X_4. 
\end{cases}
\]
Hence, we obtain 
\[
M_{1,s}=\begin{cases}
m^2-m,& \text{ if } 1+\omega^\ell s\in X_1 \cup X_2,\\
m^2,& \text{ if } 1+\omega^\ell s\in X_3 \cup X_4. 
\end{cases}
\]
Next, we evaluate $M_{2,s}$.  By Proposition~\ref{prop:size:int2}, we have 
\begin{align}
M_{2,s}=&|D_{\ell,S_1}|+\frac{G_q(\eta)}{2q}\left(\psi_{\F_{q^2}}((\omega^{\ell q}-\omega^\ell)^{-1}(1+\omega^{q\ell} s)S_0)-\psi_{\F_{q^2}}((\omega^{\ell q}-\omega^\ell)^{-1}(1+\omega^{q\ell} s)S_1)\right.\nonumber\\
&\hspace{1.0cm}\left.+\psi_{\F_{q^2}}(-(\omega^{\ell q}-\omega^\ell)^{-1}\omega^{q\ell}S_0)-\psi_{\F_{q^2}}(-(\omega^{\ell q}-\omega^\ell)^{-1}\omega^{q\ell}S_1)\right)
+\frac{-\xi_s-\xi_{\ell}-\xi_s'+\xi_{\ell}'}{2}. 
\label{eq:com14R2}
\end{align}
By Table~\ref{tab_1}, we have 
\begin{align*}
&\psi_{\F_{q^2}}((\omega^{\ell q}-\omega^\ell)^{-1}(1+\omega^{q\ell} s)S_0)-\psi_{\F_{q^2}}((\omega^{\ell q}-\omega^\ell)^{-1}(1+\omega^{q\ell} s)S_1)\\=&\, \begin{cases}
(m+1)G_{q}(\eta),& \text{ if } 1+\omega^{\ell} s\in X_1,\\
-m G_{q}(\eta),& \text{ if } 1+\omega^{\ell} s\in X_2, \\
-(m+1) G_{q}(\eta),& \text{ if }1+\omega^{\ell} s\in X_3,\\
m G_{q}(\eta),& \text{ if } 1+\omega^{\ell} s\in X_4, 
\end{cases}
\end{align*}
and 
\[
\psi_{\F_{q^2}}(-(\omega^{\ell q}-\omega^\ell)^{-1}\omega^{q\ell}S_0)-\psi_{\F_{q^2}}(-(\omega^{\ell q}-\omega^\ell)^{-1}\omega^{q\ell}S_1)=-mG_q(\eta). 
\]
Furthermore, 
\[
\frac{-\xi_s-\xi_{\ell}-\xi_s'+\xi_{\ell}'}{2}=\begin{cases}
-\frac{1}{2},& \text{ if } 1+\omega^{\ell} s\in X_1,\\
0,& \text{ if } 1+\omega^{\ell} s\in X_2, \\
\frac{1}{2},& \text{ if }1+\omega^{\ell} s\in X_3,\\
0,& \text{ if } 1+\omega^{\ell} s\in X_4. 
\end{cases}
\]
Hence, we obtain 
\[
M_{2,s}=\begin{cases}
m^2,& \text{ if } 1+\omega^\ell s\in X_1 \cup X_4,\\
m^2-m,& \text{ if } 1+\omega^\ell s\in X_2 \cup X_3. 
\end{cases}
\]
This completes the proof of the theorem for the case where $\omega^\ell \in X_2$. The proof for the case where $\omega^\ell \in X_4$ is similar. 
 \qed 

\begin{remark}{\em \label{rem:cond25}
In this remark, we show that there exists $\ell\in \{0,1,\ldots,q^2-2\}$  satisfying the condition 
\eqref{eq:lext}, i.e., 
the set 
\[
S=\left\{\ell :(q+1) \not | \, \ell, \omega^{\ell}\in X_2\cup X_4, 
\omega^{\ell q}-\omega^{\ell}\in C_{\tau m^2}^{(4m^2,q^2)} \right\}
\]
is nonempty. Since $|X_2\cup X_4|>q$, there is $\omega^\ell \in X_2\cup X_4$ such that $\omega^\ell \not \in \F_q$.  Furthermore, note that $X_2\cup X_4$ is a union of cosets of $\F_{q}^\ast$. Hence, we evaluate the size of the set 
\[
T=\left\{\ell: \omega^{\ell}\in \omega^t \F_q^\ast, 
\omega^{\ell q}-\omega^{\ell}\in C_{\tau m^2}^{(4m^2,q^2)} \right\}, 
\]
where $\omega^t\not \in \F_q$. 
Since  $\omega^{\ell q}-\omega^\ell=-\omega^{-\frac{q+1}{2}}\Tr_{q^2/q}(\omega^{\ell+\frac{q+1}{2}})$, 
the condition $\omega^{\ell q}-\omega^{\ell}\in C_{\tau m^2}^{(4m^2,q^2)}$ is reformulated as 
\[
\Tr_{q^2/q}(\omega^{\ell+\frac{q+1}{2}})\in C_{(\tau +1)m^2}^{(4m^2,q^2)}=
C_{(\tau +1)/2}^{(2,q)}. 
\]
Hence, we have  
\[
T=\left\{\ell:\omega^\ell \in \omega^t \F_q^\ast,\,\T_{q^2/q}(\omega^{\ell+\frac{q+1}{2}})\in C_{(\tau +1)/2}^{(2,q)}\right\}. 
\]
The size of $T$ is given by 
\begin{align*}
|T|=&\, \frac{1}{q}\sum_{a\in \F_q}\sum_{x\in \omega^t \F_q^\ast}\sum_{b\in C_{(\tau +1)/2}^{(2,q)}} \psi_{\F_{q^2}}(ax\omega^{\frac{q+1}{2}})\psi_{\F_q}(-ab)\\
=&\, \frac{1}{q}\sum_{a\in \F_q^\ast}\sum_{x\in  \F_q^\ast}\sum_{b\in C_{(\tau +1)/2}^{(2,q)}} \psi_{\F_{q^2}}(x\omega^{t+\frac{q+1}{2}})\psi_{\F_q}(-ab)+\frac{(q-1)^2}{2q}. 
\end{align*}
Since $\Tr_{q^2/q}(\omega^{t+\frac{q+1}{2}})\not=0$, we have 
\[
|T|=\frac{q-1}{2q}\sum_{a\in \F_q^\ast}\sum_{x\in  \F_q^\ast} \psi_{\F_{q}}(x\Tr_{q^2/q}(\omega^{t+\frac{q+1}{2}}))\psi_{\F_q}(a)+\frac{(q-1)^2}{2q}
=\frac{q-1}{2}>0. 
\]
Hence, $T$ is nonempty, which also implies that $S$ is nonempty. 
}\end{remark}
\begin{example}{\em 
In the cases where $m=3,5$, we can find subsets $X_i$, $i=1,2,3,4$, satisfying the conditions of Theorem~\ref{thm:main3}. Hence, in these cases, the Hadamard matrix $H$ defined in \eqref{eq:hada3} can be transformed to a  regular Hadamard matrix by negating some rows and columns of $H$. 

In the case where $m=3$, 
define four subsets of $\{0,1,\ldots,11\}$: 
\begin{align*}
&H_1=\{1,5\}, \quad H_2=\{0,2,9,10\}, \\
&H_3=\{7,11\}, \quad H_4=\{3,4,6,8\}, 
\end{align*}
and define $X_i=\bigcup_{j\in H_i}C_j^{(12,17^2)}$, $i=1,2,3,4$. It is clear that $X_1=\omega^{2m^2} X_3$,  $X_2=\omega^{2m^2} X_4$, and each $X_i$ is a union of cosets of $C_j^{(36,17^2)}$. Furthermore, we checked by computer that $X_i$, $i=1,2,3,4$, take character values listed in Table~\ref{tab_1}. 

In the case where $m=5$, 
define four subsets of $\{0,1,\ldots,19\}$: 
\begin{align*}
&H_1=\{2,3,10,19\}, \quad H_2=\{1, 7, 14, 15, 16, 18\}, \\
&H_3=\{0,9,12,13\}, \quad H_4=\{4, 5, 6, 8, 11, 17\}, 
\end{align*}
and define $X_i=\bigcup_{j\in H_i}C_j^{(20,7^4)}$, $i=1,2,3,4$. It is clear that $X_1=\omega^{2m^2} X_3$,  $X_2=\omega^{2m^2} X_4$, and each $X_i$ is a union of cosets of $C_j^{(100,7^4)}$. Furthermore, we checked by computer that $X_i$, $i=1,2,3,4$, take character values listed in Table~\ref{tab_1}. 

In the case where $m=7$, we checked by computer that there is no translation association scheme satisfying the conditions of Theorem~\ref{thm:main3} if  each part $X_i$ is a union of cosets of $C_0^{(28,97^2)}$. 
However, it may be possible if each $X_i$ is a  union of cosets of $C_0^{(196,97^2)}$ not a union of 
cosets of $C_0^{(28,97^2)}$. But, in this case, our computer could not work. 
}\end{example}

\section{Open problems}
We conclude this paper by listing some problems for future works. 

In this paper, we found new constructions of Hadamard matrices with maximum 
excess. In particular, 
we proved that if either of 
$4m^2+4m+1$ or $2m^2+2m+1$ is a prime power, then there exists a 
biregular Hadamard matrix $H$ of order $n=4m^2+4m+4$ with maximum excess attaining the bound of Proposition~\ref{prop:Hadabound}. 
The constructions are based on 
negating some rows and 
columns of known Hadamard matrices obtained from quadratic residues of 
finite fields. 
In a similar way to the biregular case, we tried to find regular Hadamard matrices. 
In particular, we gave a sufficient condition for the matrix $H$ defined in \eqref{eq:hada3} being transformed to a regular Hadamard matrix of order $2(q+1)$ 
under the assumption of the existence of a four-class translation 
association scheme on $\F_{q^2}$, where $q=2m^2-1$ with $m$ odd. Furthermore, we found association schemes satisfying the conditions of Theorem~\ref{thm:main3} in 
the cases where $m=3$ and $5$. So, an open problem remains whether such 
a four-class association scheme exists when $m\ge 7$. 
\begin{problem}
Let $q=2m^2-1$ be a prime power with $m$ an odd integer.  
Does there exist a four-class association scheme  satisfying the conditions of Theorem~\ref{thm:main3} for $m\ge 7$?  
\end{problem}
More generally, we have the following problem. 
\begin{problem}
Can the matrix
$H$ defined in \eqref{eq:hada3} be transformed to a regular Hadamard matrix?  
\end{problem}
In this paper, we did not care about the ``symmetry" of Hadamard matrices.  
So, we give the following problem. 
\begin{problem}
Can the Hadamard matrices 
 obtained in Theorems~\ref{thm:main1}, \ref{thm:main2}  and \ref{thm:main3tr} be transformed to symmetric  Hadamard matrices by permuting  rows or columns?  
\end{problem}


\end{document}